\def\LHS{left-hand side}
\def\RHS{right-hand side}
\newtheorem{theorem}{Theorem}[section]
\newtheorem{corollary}[theorem]{Corollary}
\newtheorem{definition}[theorem]{Definition}
\newtheorem{example}[theorem]{Example}
\newtheorem{proposition}[theorem]{Proposition}
\newtheorem{remark}[theorem]{Remark}
\def\dis{\displaystyle}
\def\N{\mathbb{N}}
\def\R{\mathbb{R}}
\def\sur{{\rm sur}\,}
\def\xx{\bar x}
\def\dist{\,{\rm dist}\,}
\def\xx{\bar x}
\def\bx{\bar x}
\def\by{\bar y}
\def\bz{\bar z}
\def\dom{\mathop{\rm dom}\nolimits} 
\def\gph{\mathop{\rm gph}\nolimits} 
\def\tto{\rightrightarrows}
\def\ball{{I\kern -.35em B}}
\def\bx{\bar x}
\def\by{\bar y}   
\def\reg{\mathop{\rm reg}\nolimits}
\def\lip{\mathop{\rm lip}\nolimits}
\def\lopen{\mathop{\rm lopen}\nolimits}
\def\subreg{\mathop{\rm subreg}\nolimits}
\def\semireg{\mathop{\rm semireg}\nolimits}
\def\psopen{\mathop{\rm popen}\nolimits}
\def\calm{\mathop{\rm calm}\nolimits}
\def\inT{\mathop{\rm int}\nolimits}
\def\dis{\mathop{\rm dist}\nolimits}
\begin{document}
\bigskip

\centerline{{\large \bf  On semiregularity of mappings}}

\bigskip

\bigskip

\centerline{ \bf
  R. Cibulka\footnote{NTIS - New Technologies for the Information Society and Department of Mathematics, Faculty of Applied Sciences, University of West Bohemia, Univerzitn\'\i\ 22, 306 14
                      Pilsen, Czech Republic, cibi@kma.zcu.cz. Supported by the project GA15-00735S.},
  M. Fabian\footnote{Mathematical Institute of Czech Academy of Sciences, \v Zitn\'a 25, 115 67 Praha 1, Czech Republic, fabian@math.cas.cz. Supported by
                     the project GA\v CR  17-00941S and by RVO: 67985840.},  
  A. Y. Kruger\footnote{Centre for Informatics and Applied Optimization, Federation University Australia, POB 663, Ballarat, VIC 3350, Australia, a.kruger@federation.edu.au. Supported by the
  Australian Research Council, project DP160100854.}
 }

\bigskip

\centerline{\today}

\bigskip

\begin{small}
\begin{quote}
{\bf Abstract.}
There are two basic ways of weakening the definition of the well-known metric regularity
property by fixing one of the points involved in the definition. The first
resulting property is called metric subregularity and has attracted a lot of attention during
the last decades. On the other hand,  the latter property which we call semiregularity can be found under
several names and the
corresponding results  are scattered in the literature. We provide a self-contained material gathering and extending
the existing theory on the topic. We demonstrate a clear relationship with other regularity properties, for example, the equivalence with the so-called openness with a linear rate at the reference point is shown. In particular cases, we derive necessary and/or sufficient conditions of both primal and dual type.   We illustrate the importance of semiregularity in the convergence analysis of  an inexact Newton-type scheme for generalized equations with not necessarily differentiable single-valued part.

\noindent
{\bf Key Words.}   open mapping theorem, linear openness, metric semiregularity, set-valued perturbation

\bigskip

\noindent
{\bf AMS Subject Classification (2010)} 49J53, 49J52, 49K40, 90C31.

\end{quote}

\bigskip

\bigskip

\end{small}

\section{Introduction}
The concept of \emph{regularity} of a set-valued mapping $F$ acting from a metric space $(X,d)$ into (subsets of) another metric space $(Y, \varrho)$, denoted by ${F:X \tto Y}$,  around a given reference point  $(\bx,\by)$ in its graph $\gph F$ plays a fundamental role in modern variational analysis and non-smooth optimization, see, for example, a recent survey  \cite{IoffeSurvey} by Ioffe or books \cite{AF,book, KKbook, JPP}. By regularity we mean that one of the three equivalent properties -- metric regularity, openness with a linear rate around the reference point, and pseudo-Lipschitz property\footnote{Often also called Lipschitz-like or Aubin property.}  of the inverse $F^{-1}$ -- holds for the mapping under consideration. First, the mapping $F$
is said to be  {\em   metrically regular}\footnote{In \cite{book}, this property is called metric regularity at $\bx$ for $\by$ under an additional assumption that the graph of $F$ is locally closed at the reference point.}   around $(\bx,\by)$ when $\by \in F(\bx)$ and  there is a constant $\kappa > 0$ along  with
a neighborhood $U \times V$ of $(\bx, \by)$ in $X \times Y$ such that
\begin{equation}
\label{defMR}
      \dis \big(x, F^{-1}(y) \big) \leq \kappa \dis\big(y, F(x)\big)
      \quad \mbox{for every} \quad  (x,y) \in U \times V,
\end{equation}
where $\dis(u,C)$ is the distance from a point $u$ to a set $C$ and the space $X \times Y$ is equipped with the product (box) topology. The infimum of $\kappa > 0$ for which there exists a neighborhood $ U \times V$ of $(\bx,\by)$ in $X \times Y$ such that \eqref{defMR} holds is called the {\it regularity modulus} of $F$
around $(\bx,\by)$ and is denoted by $\reg F (\bx,\by)$.

Second, the mapping $F$ is called {\it open with a linear rate}\footnote{There are other equivalent definitions in the literature. Also note that in \cite{book} the constant $c$ appears on the right-hand side of \eqref{defSur}.} around
$(\bar{x},\bar{y})$ when $\by \in F(\bx)$ and there are positive constants $c$ and $\varepsilon$ along  with
a neighborhood $ U \times V$ of $(\bx, \by)$ in $X \times Y$  such that
\begin{equation}\label{defSur}
\ball[y,ct] \subset F(\ball[x,t])
\quad  \mbox{whenever} \quad  (x,y) \in  U \times V, \quad y \in F(x) \quad \mbox{and} \quad  t  \in (0,\varepsilon),
\end{equation}
where  $\ball[u,r]$ denotes the closed ball centered at $u$ with a radius $r > 0$.
The  supremum of $c > 0$  for which there exist a constant $\varepsilon>0$ and a neighborhood $ U \times V$ of $(\bx, \by)$ in $X \times Y$ such that (\ref{defSur}) holds is called the
{\it modulus of surjection}
of $F$  around  $(\bar{x},\bar{y})$ and is denoted by
$\sur F(\bar{x},\bar{y})$ \footnote{Clearly, we can replace the closed balls in \eqref{defSur} with the open ones.}.
Finally, the mapping $F: X \tto Y$ is said to be {\it pseudo-Lipschitz} around $(\bx,\by)$ when $\by \in F(\bx)$ and there is a constant $\mu > 0$ along with a neighborhood $U \times V$ of $(\bx, \by)$ in $X \times Y$  such that
\begin{equation}\label{defLip}
 \dis\big(y, F(x)\big) \leq \mu \, d(x, x') \quad  \mbox{whenever} \quad x,x' \in U \quad \mbox{and} \quad y \in F(x') \cap V.
\end{equation}
The infimum of $\mu > 0$ for which there exists a neighborhood $U \times V$ of $(\bx,\by)$ in $X \times Y$ such that \eqref{defLip} holds is called the {\it Lipschitz  modulus} of $F$
 around $(\bx, \by)$ and is denoted by $\lip F (\bx,\by)$.

A fundamental well-known fact is that
\begin{equation} \label{eqBasic}
 \sur F(\bar{x},\bar{y}) \cdot \reg F (\bx,\by) = 1 \quad \mbox{and} \quad \reg F (\bx,\by) = \lip F^{-1} (\by,\bx),
\end{equation}
under the
convention that  $0 \cdot \infty =  \infty \cdot 0 = 1$,  $\inf \emptyset = \infty$, and, as we work with nonnegative quantities, that $\sup \emptyset = 0$.

Fixing one of the components of $(x,y)$ in \eqref{defMR}, that is letting either $x:=\bx$ or $y:=\by$, one gets two different, weaker than regularity, concepts. Of course, one can reformulate both of them in terms of openness and continuity of the inverse, respectively.
\begin{definition} \label{defSubreg}
Consider a mapping  $F:X \tto Y$ between metric spaces $(X, d)$ and $(Y,\varrho)$ and a~point $(\bx, \by) \in X \times Y$.
\begin{itemize}
 \item[(A1)] $F$ is said to be {\em metrically subregular at $(\bx,\by)$} when $\by \in F(\bx)$ and  there is a constant $\kappa > 0$ along  with
a neighborhood $U$ of $\bx$ in $X$ such that
\begin{equation}
\label{defMSR}
      \dis \big(x, F^{-1}(\by) \big) \leq \kappa \dis\big(\by, F(x)\big)
      \quad \mbox{for every} \quad  x \in U.
\end{equation}
 The infimum of $\kappa > 0$ for which there exists a neighborhood $U$ of $\bx$ in $X$ such that \eqref{defMSR} holds is called the {\em subregularity modulus} of $F$
 at $(\bx,\by)$ and is denoted by $\subreg F (\bx,\by)$;
  \item[(A2)] $F$ is said to be  {\em pseudo-open with a linear rate at $(\bar{x},\bar{y})$} when $\by \in F(\bx)$ and there are positive constants $c$ and $\varepsilon$  along  with
a neighborhood $U$ of $\bx$ in $X$  such that
\begin{equation}\label{defSSur}
\by  \in  F(\ball[x,t]) \quad  \mbox{whenever} \quad  x \in U \cap F^{-1}(\ball[\by, ct])   \quad \mbox{and} \quad  t  \in (0,\varepsilon).
\end{equation}
The  supremum of $c > 0$ for which there exist a constant $\varepsilon>0$ and a neighborhood $U$ of $\bx$ in $X$ such that (\ref{defSSur}) holds   is called the
{\em modulus of pseudo-openness} of $F$  at  $(\bar{x},\bar{y})$ and is denoted by
$\psopen F(\bar{x},\bar{y})$;
\item[(A3)]  $F$ is said to be {\em calm at $(\bx,\by)$} when $\by \in F(\bx)$ and there is a constant $\mu > 0$ along with a~neighborhood $U\times V$ of $(\bx, \by)$ in $X\times Y$  such that
\begin{equation}\label{defCalm}
 \dis\big(y, F(\bx)\big) \leq \mu \, d(\bx,x) \quad  \mbox{whenever} \quad x \in U \quad \mbox{and} \quad y \in F(x)\cap V.
\end{equation}
The infimum of $\mu > 0$ for which there exists a neighborhood $U\times V$ of $(\bx,\by)$ in $X\times Y$ such that \eqref{defCalm} holds is called the {\em calmness  modulus} of $F$
at $(\bx, \by)$ and is denoted by $\calm F (\bx,\by)$.
\end{itemize}
\end{definition}

Properties in (A1) and (A3) are entrenched in the literature \cite{JPP, book} and the metric subregularity of a mapping is known to be equivalent to the calmness of its inverse. (A2) is defined and proved to be equivalent with the remaining ones in \cite{ADS2013}. More precisely, the following analogue of \eqref{eqBasic} holds true
\begin{equation} \label{eqBasicA}
 \psopen F(\bar{x},\bar{y}) \cdot \subreg F (\bx,\by) = 1 \quad \mbox{and} \quad \subreg F (\bx,\by) = \calm F^{-1} (\by,\bx).
\end{equation}
The case when $x:=\bx$ in \eqref{defMR}, being the same as letting $(x,y):=(\bx,\by)$ in \eqref{defSur}, is known under
several names.
In this note we provide a self-contained material gathering and extending results on this property scattered in the literature
and
illustrate possible applications.

\begin{definition} \label{defControl}
Consider a mapping  $F:X \tto Y$ between metric spaces $(X, d)$ and $(Y,\varrho)$ and a~point $(\bx, \by) \in X \times Y$.
\begin{itemize}
 \item[(B1)] $F$ is said to be {\em metrically semiregular at $(\bx,\by)$} when $\by \in F(\bx)$ and there is a constant $\kappa > 0$ along  with
a neighborhood $V$ of $\by$ in $Y$ such that
\begin{equation}
\label{defMHR}
      \dis \big(\bx, F^{-1}(y) \big) \leq \kappa \, \varrho (\by, y)
      \quad \mbox{for every} \quad   y \in V.
\end{equation}
 The infimum of $\kappa > 0$ for which there exists a neighborhood $V$ of $\by$ in $Y$ such that \eqref{defMHR} holds is called the {\em semiregularity modulus} of $F$
 at $(\bx,\by)$ and is denoted by $\semireg F (\bx,\by)$;
  \item[(B2)] $F$ is said to be  {\em open with a linear rate at $(\bar{x},\bar{y})$} when $\by \in F(\bx)$ and  there are positive constants $c$ and $\varepsilon$ such that
\begin{equation}\label{suro}
 \ball[\bar{y},ct] \subset F(\ball[\bar{x},t])
\quad  \mbox{for each} \quad    t  \in (0,\varepsilon).
\end{equation}
The  supremum of $c > 0$ for which there exists  a constant $\varepsilon>0$ such that (\ref{suro}) holds  is called the
{\em modulus of openness}
of $F$  at  $(\bar{x}, \bar{y})$ and is denoted by
$\lopen F(\bar{x},\bar{y})$.
\end{itemize}
\end{definition}

Properties (B1) and (B2) were studied by the third author in \cite{AK2009} (see also \cite{KruTha15}), where their equivalence was established (see Proposition~\ref{prop01} below) and the term \emph{semiregularity} was suggested for property (B1).
This property has been later used in \cite{AM2011, DS2012,Ude4} under the name \emph{hemiregularity}.
Following \cite{DmiMilOsm80}, property (B2) was referred to in \cite{AK2009} as \emph{$c$-covering}, while in the earlier paper \cite{Kru06} it was called simply \emph{regularity}.
This property can be found also in \cite{DS2012,book}.
In the recent survey by Ioffe \cite{IoffeSurvey}, the property is called \emph{controllability}, the concept stemming from the control theory.
The explicit definition of $\lopen F(\bar{x},\bar{y})$ can be found in \cite{Kru04,Kru05}, while its main components are present already in \cite{Kru00,Kru02}.
Note that thanks to the Robinson-Ursescu theorem, if $F$ has a closed convex graph, the openness (with a linear rate) at a point is equivalent to the openness around this point.

One can define the third (equivalent) property in terms of the inverse $F^{-1}$.
To the best of our knowledge, it first appeared in \cite[p.~34]{KKbook} under the name \emph{Lipschitz lower semicontinuity}. It was defined for $F^{-1}$ via inequality \eqref{defMHR}.
This property is called \emph{pseudo-calmness} in \cite{DS2012}, while  the term \emph{linear recession} is used in \cite{IoffeSurvey}.

 A (graphical) localization of a set-valued mapping $F: X \tto Y$ around the reference point $(\bx,\by) \in \gph F$ is any mapping $\widetilde F: X \tto Y$ such that $\gph \widetilde F = \gph F \cap (U \times V)$ for some neighborhood $U \times V$ of $(\bx, \by)$ in $X \times Y$.
 Using this notion we can define ``stronger" versions of the properties mentioned above.
\begin{definition} \label{defStrong}
Consider a mapping  $F:X \tto Y$ between metric spaces $(X, d)$ and $(Y,\varrho)$ and a point $(\bx, \by) \in X \times Y$. Then $F$ is said to be
\begin{itemize}
  \item[(S)]  {\em strongly metrically regular around $(\bx,\by)$} when $F$ is metrically regular at $(\bx,\by)$ and $F^{-1}$ has a localization around $(\by,\bx)$ which is nowhere multivalued;
  \item[(SA)]  {\em strongly metrically subregular at $(\bx,\by)$} when $F$ is metrically subregular at $(\bx,\by)$ and $F^{-1}$ has no localization around $(\by,\bx)$ that is multivalued at $\by$;
  \item[(SB)] {\em strongly metrically semiregular at $(\bx,\by)$} when $F$ is metrically semiregular at $(\bx,\by)$ and $F^{-1}$ has a localization around $(\by,\bx)$ which is nowhere multivalued.
\end{itemize}
\end{definition}

Clearly, (S)--(SA) are connected with (and can be defined by) the properties of the inverse $F^{-1}$. Indeed, (S) means that for each $\ell > \reg F(\bx,\by)$ there is a neighborhood $U \times V$ of $(\bx,\by)$ in $X \times Y$ such that the localization $V \ni y \longmapsto F^{-1}(y) \cap U$ is single-valued and Lipschitz continuous on $V$ with the constant $\ell$ \cite[Proposition 3G.1]{book}. While  (SA) means that for each $\ell > \subreg F(\bx,\by)$ there is a neighborhood $U\times V$ of $(\bx, \by)$ in $X \times Y$ such that
$$
d(\bx,x) \leq \ell \, \varrho(\by,y) \quad  \mbox{whenever} \quad
x \in U\quad \mbox{and} \quad y\in F(x)\cap V.
$$
Finally, $(SB)$ means that for each $\ell > \semireg F(\bx,\by)$ there is a neighborhood $U \times V$ of $(\bx,\by)$ in $X \times Y$ such that the localization $V \ni y \longmapsto F^{-1}(y) \cap U$ is single-valued and calm on $V$ with the constant $\ell$.
As in the
case of regularity, we omit the
word ``metrically" in the rest of the note, that is, we say that $F$ is subregular (semiregular, strongly regular, etc.) at/around $(\bx,\by)$.

 Note that the validity of both the weaker point-based properties does not imply the stronger one, that is, if $F$ satisfies $(A1)$ and $(B1)$ then $F$ does not need to be regular around the reference point (see Example~\ref{ex01}).

Now, we survey several well known results concerning regularity and (sub)regularity which are related to the ones presented in this note. Let us point out that in case of a single-valued mapping, denoted by  $f : X \to Y$, we do not mention the point $\by = f(\bx)$ in all the above definitions, that is, we write $\sur f(\bx)$, $\reg f(\bx)$, etc., instead of $\sur f(\bx,f(\bx))$, $\reg f(\bx,f(\bx))$, etc.; and if the corresponding modulus is independent of $\bx$ then we omit $\bx$ as well.

Suppose that $X$ and $Y$ are Banach spaces and  $A: X \to Y$  is a continuous linear operator. Then the Banach-Schauder open mapping theorem and the linearity of $A$  imply (cf. \cite[Theorem~1.104 and Proposition 1.106]{JPP}, \cite[Proposition 5.2]{AM2011}) that: {\it $A$ is regular around any point $\Leftrightarrow$ $A$ is semiregular at any point  $\Leftrightarrow$ $A$ is surjective; moreover
$$
 \semireg A = \reg A \quad \mbox{and} \quad  \sur A = \sup\{\varrho > 0:  A(\ball_X) \supset \varrho \ball_Y\} = \inf\{ \|A^*y^*\|: y^* \in \mathbb{S}_{Y^*}\},
$$}
where $A^*$ is the adjoint (dual) operator to $A$ acting between the dual spaces $Y^*$ and $X^*$ of $Y$ and $X$, and $\ball_Z$ and $\mathbb{S}_Z$ denote the closed unit ball and the unit sphere  in a normed space $Z$, respectively.
This is a particular case of Proposition~\ref{prop02} (iv). If $A$ is invertible, then $ \sur A = 1/\|A^{-1}\|$. For  a~real $m$-by-$n$ matrix  $A \in \R^{m \times n}$,
$ \sur A$
equals to the least singular value of $A$. Using the Banach-Schauder theorem again, if $A$ has
closed range, then it is subregular at any point; and if, in addition, $A$ is injective then it is strongly subregular everywhere. Note that {\it both} the statements fail without the
closedness
assumption (see \cite[Example 2.7]{CDK}). In general, $A$ is strongly subregular everywhere if and only if
$\kappa:=\inf_{h \in \mathbb{S}_X} \|Ah\| > 0$; moreover $\subreg A = 1/\kappa$.
If the dimension of $X$ is finite, then $\kappa > 0$ if and only if $A^{-1}(0) = \{0\}$, that is, $A$ is injective.

Using the above notation, for a non-linear mapping we have the following result:
\begin{theorem}\label{Gr}  Consider a mapping $f:X \to Y$ defined around  a point $\bx \in X$ and a continuous linear  mapping $A:X \to Y$.
\begin{itemize}
 \item[(i)]
Then
$ \sur f (\bx) \geq \sur A - \lip (f-A)(\bx)$.
If, in addition, the mapping
$A$ is invertible and $  \lip (f-A)(\bx) <  \sur A$, then $f$ is strongly regular at $\bx$ and  $\sur f (\bx) \geq 1/\|A^{-1}\| - \lip (f-A)(\bx)
$ ( $> 0$).
 \item[(ii)] If $A$ is strongly subregular (everywhere) and  $\calm (f-A)(\bx) <  \psopen A$, then $f$ is strongly subregular at $\bx$ and
 $  \psopen f (\bx) \geq \psopen A - \calm (f-A)(\bx)$ ( $> 0$).
\end{itemize}

\end{theorem}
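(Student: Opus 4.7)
My plan is to decompose $f = A + g$ with $g := f - A$ and handle the two parts separately. Both claims are classical additive-perturbation results in the Lyusternik--Graves spirit: (i) requires a Newton/Banach-type iteration driven by $\sur A$, whereas (ii) reduces to a one-line triangle-inequality estimate once the global lower bound for $\|Ah\|$ stated in the paragraph preceding the theorem is invoked.

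For the surjection bound in (i) I fix $c \in (0,\sur A)$ and $\lambda > \lip g(\bx)$ with $c > \lambda$, pick an auxiliary $c' \in (\lambda, c)$, and solve $f(x)=y$ by the following Graves-type iteration. Starting from $(x_0, y_0)$ in a neighbourhood of $(\bx,\by)$ with $f(x_0) = y_0$ and any $y$ with $v_0 := y - y_0$ small, I iteratively use $\sur A > c'$ to pick $h_n$ with $A h_n = v_n := y - f(x_n)$ and $\|h_n\| \le \|v_n\|/c'$, and set $x_{n+1} := x_n + h_n$. A direct computation gives $v_{n+1} = g(x_n) - g(x_{n+1})$, so the local Lipschitz bound on $g$ yields $\|v_{n+1}\| \le (\lambda/c')\|v_n\|$. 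Geometric summation shows $\{x_n\}$ is Cauchy, its limit $x^*$ satisfies $f(x^*) = y$, and $\|x^* - x_0\| \le \|v_0\|/(c' - \lambda)$, so linear openness of $f$ around $\bx$ with rate $c'-\lambda$ is attained. Letting $c' \uparrow c$, $c \uparrow \sur A$ and $\lambda \downarrow \lip g(\bx)$ produces $\sur f(\bx) \ge \sur A - \lip g(\bx)$. For the strong-regularity assertion of (i), invertibility of $A$ together with the hypothesis $\lip g(\bx) < \sur A = 1/\|A^{-1}\|$ gives $\|A^{-1}\|\,\lip g(\bx) < 1$; consequently the map $\Phi_y(x) := A^{-1}\bigl(y - g(x)\bigr)$ is, for every $y$ close to $\by$, a strict contraction of a small closed ball around $\bx$ into itself (note $\Phi_{\by}(\bx) = \bx$). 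Banach's fixed-point theorem delivers a unique fixed point $x(y)$ which depends Lipschitz-continuously on $y$ and defines a single-valued localisation of $f^{-1}$ around $(\by, \bx)$; combined with the surjection obtained above, this upgrades regularity to strong regularity.

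For (ii) I invoke the characterisation recalled just before the theorem: strong subregularity of a linear $A$ everywhere is equivalent to $\|Ah\| \ge \kappa\,\|h\|$ for every $h \in X$ with $\kappa = \psopen A$. For any $\mu > \calm g(\bx)$ choose a neighbourhood $U$ of $\bx$ on which $\|g(x) - g(\bx)\| \le \mu\,\|x - \bx\|$. Then for every $x \in U$ the decomposition $f(x) - f(\bx) = A(x-\bx) + \bigl(g(x) - g(\bx)\bigr)$ together with the reverse triangle inequality gives
\[
 \|f(x) - f(\bx)\| \;\ge\; \|A(x-\bx)\| - \|g(x)-g(\bx)\| \;\ge\; (\kappa - \mu)\,\|x-\bx\|,
\]
which, since $\kappa - \mu > 0$, proves strong subregularity of $f$ at $\bx$ with $\psopen f(\bx) \ge \kappa - \mu$. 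Letting $\mu \downarrow \calm g(\bx)$ produces the announced inequality.

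The main technical obstacle is the Graves iteration in (i): one must choose the initial radius $t$ small enough that every iterate $x_n$ remains inside the neighbourhood on which the Lipschitz estimate for $g$ is valid, and then combine the upper bound $\|v_0\| \le (c'-\lambda)t$ with the geometric decay of $\|v_n\|$ to conclude $\|x^* - x_0\| \le t$, so that openness is actually attained with the expected linear rate. Part (ii), by contrast, is essentially a one-line estimate once the global linear lower bound on $\|Ah\|$ is at hand; the only minor subtlety is that the strict positivity $\kappa - \mu > 0$ automatically forces the $f^{-1}$-localisation around $(\by, \bx)$ to be at most single-valued at $\by$, which is the ``strong'' part of strong subregularity.
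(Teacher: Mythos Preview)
Your proof is correct. Note, however, that the paper does not actually supply its own proof of this theorem: immediately after the statement it explains that Theorem~\ref{Gr} is a particular case of the general single-valued perturbation result Theorem~\ref{thmPerturb} (applied with $F:=A$ and $g:=f-A$), and both theorems are attributed to the literature (Graves, Hildebrand--Graves, Dontchev for (i); \cite[Theorem~2.1]{CDK} for (ii)) rather than proved in the text.

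Your argument is therefore a self-contained substitute for those citations, and it follows precisely the classical route the paper alludes to. The Graves iteration you run in (i) is the standard Lyusternik--Graves scheme (solve $Ah_n=v_n$ with $\|h_n\|\le\|v_n\|/c'$, then contract via the Lipschitz bound on $g=f-A$), and your use of the Banach contraction $\Phi_y(x)=A^{-1}(y-g(x))$ to obtain the single-valued localisation is exactly the Hildebrand--Graves step. For (ii) your one-line estimate $\|f(x)-f(\bx)\|\ge\|A(x-\bx)\|-\|g(x)-g(\bx)\|\ge(\kappa-\mu)\|x-\bx\|$, with $\kappa=\psopen A=\inf_{h\in\mathbb{S}_X}\|Ah\|$, is the same elementary argument behind \cite[Theorem~2.1]{CDK}. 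The only point worth making explicit is that the completeness of $X$ (assumed in the paragraph preceding the theorem) is what makes the Cauchy sequence $(x_n)$ converge in (i); everything else you flag (keeping the iterates inside the Lipschitz neighbourhood, the ``strong'' part of strong subregularity following automatically from $\kappa-\mu>0$) is handled correctly.
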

 Theorem~\ref{Gr} is a particular case of the well known fact that (strong) regularity as well as strong subregularity are stable with respect to a single-valued perturbation (see Theorem~\ref{thmPerturb} below). Part (i) was proved by Graves \cite{G} and Graves-Hildebrand \cite{HG}.
 More precisely, Graves proved that $\lopen f (\bx) \geq \sur A - \lip (f-A)(\bx) > 0$, which is weaker. As observed in \cite{g_ad} a~slight modification of the original proof yields the (stronger) version above.  If $A$ is the strict derivative\footnote{Sometimes called strong derivative \cite{Pourciau}.}
of $f$ at $\bx$, that is, when  $\lip (f-A)(\bx) = 0$, then we have $\sur f (\bx) = \sur A$. This is the case, for example, if $f$ is (Gateaux) differentiable in a vicinity of $\bx$ and the derivative mapping  $x \longmapsto Df(x)$ is continuous at $\bx$ as a mapping from $X$ into $\mathcal{L}(X,Y)$, the space of all  linear bounded operators  from $X$ into $Y$. In fact, the weak Gateaux differentiability is enough.
In particular, the Lyusternik theorem \cite{L}, proved before the Graves theorem,  follows from Theorem~\ref{Gr}.
On the other hand, assume that $X := \R^n$ and $Y:=\R^m$. If $f$ is strictly differentiable at $\bx$, then there is a neighborhood $U$ of $\bx$ such that $f$ is Lipschitz continuous on $U$. Let $D \subset U$ be the set of all $x \in U$ such that $f$ is Fr\'echet differentiable at $x$. Then $D$ has full Lebesgue
measure by the Rademacher theorem. Moreover, the Jacobian mapping $D \ni x \longmapsto \nabla f(x) \in \R^{m \times n}$ is continuous at $\bx$ \cite[Lemma 5.1]{Pourciau}. However, this does not imply that $f$ is differentiable on any neighborhood of $\bx$ (\cite[p. 324]{Pourciau} or \cite[p.35]{book}). If $f$ is differentiable in a vicinity of $\bx$ then $f$ is strictly differentiable at $\bx$ if and only if $\nabla f$ is continuous at $\bx$ \cite[Proposition 1D.7]{book}.
Theorem~\ref{Gr} (ii) which can be found as \cite[Theorem 2.1]{CDK}, for example,  fails when (non-strong) subregularity is considered \cite[p. 201]{book}.


To check the regularity of the mapping in question we have the following regularity criterion  \cite[Corollary 1]{fp}, \cite[Theorem 1b]{i2000}, \cite[Proposition 2.1]{CF}.

\begin{proposition}\label{criterion1}
Let $(X,d)$ be a complete metric space and $(Y,\varrho)$ be a metric space,
let $\bar x\in X$ be given, and let $g: X\to Y$ be a continuous mapping, whose domain is all of $X$.
Then $\sur g(\bar x)$ equals to the supremum of all $c > 0$ for which
there is $r > 0$ such that for all $(x,y) \in \ball(\xx,r) \times \big(\ball(g(\xx),r)\setminus \{g(x) \} \big)$ there is a point $x'\in X$ satisfying
\begin{eqnarray}\label{hvezda}
 c\,d(x',x) < \varrho(g(x),y)-\varrho(g(x'),y).
\end{eqnarray}
\end{proposition}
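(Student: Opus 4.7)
The plan is to prove the equality $\sur g(\bx)=c^\ast$, where $c^\ast$ denotes the supremum described in the statement, by two inequalities, the harder one relying on Ekeland's variational principle (which is where the completeness of $X$ enters).

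For the easier direction $c^\ast \ge \sur g(\bx)$, fix any $c<\sur g(\bx)$ and pick an intermediate rate $c''\in(c,\sur g(\bx))$; by the definition of the surjection modulus there exist $\varepsilon>0$ and a neighbourhood of $(\bx,g(\bx))$ on which the $c''$-covering inclusion $\ball[g(x),c''t]\subset g(\ball[x,t])$ holds for every admissible $t\in(0,\varepsilon)$. Choose $r>0$ small enough so that $\ball(\bx,r)$ lies inside the $x$-neighbourhood, $g(\ball(\bx,r))$ lies inside the $y$-neighbourhood, and so that for any $(x,y)\in \ball(\bx,r)\times\ball(g(\bx),r)$ one has $\alpha:=\varrho(g(x),y)$ small enough to make $t:=\alpha/c''<\varepsilon$. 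When $y\neq g(x)$, the $c''$-covering furnishes $x'$ with $g(x')=y$ and $d(x',x)\le t=\alpha/c''$, whence $c\,d(x',x)\le (c/c'')\alpha<\alpha=\varrho(g(x),y)-\varrho(g(x'),y)$, i.e.\ \eqref{hvezda} holds. Since $c<\sur g(\bx)$ was arbitrary, $\sur g(\bx)\le c^\ast$.

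For the converse $c^\ast\le \sur g(\bx)$, fix $c<c^\ast$ and an $r>0$ witnessing the descent property \eqref{hvezda}; it suffices to show $\sur g(\bx)\ge c'$ for every $c'\in(0,c)$. Choose a smaller radius $r'>0$ and $\varepsilon>0$ so that for each $x_0\in\ball(\bx,r')$ and $t\in(0,\varepsilon)$ one has $r'+t<r$ and $c't+\varrho(g(x_0),g(\bx))<r$ (the latter via the continuity of $g$ at $\bx$). Given such $x_0$, $t$, and any $y\in\ball[g(x_0),c't]$, consider the continuous nonnegative function $\varphi(u):=\varrho(g(u),y)$ on the complete space $X$, for which $\varphi(x_0)\le c't$. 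Ekeland's variational principle with parameters $\epsilon=c't$ and $\lambda=t$ produces $\hat x\in X$ satisfying
\begin{equation*}
d(\hat x,x_0)\le t,\qquad \varphi(\hat x)\le \varphi(u)+c'\,d(u,\hat x)\quad\text{for all }u\in X.
\end{equation*}
The neighbourhood choices ensure $\hat x\in\ball(\bx,r)$ and $y\in\ball(g(\bx),r)$, so if $g(\hat x)\neq y$ the descent hypothesis yields some $x'\in X$ (automatically $x'\neq\hat x$) with $c\,d(x',\hat x)<\varphi(\hat x)-\varphi(x')$; combined with the Ekeland inequality at $u=x'$ this gives $c'\,d(x',\hat x)\ge \varphi(\hat x)-\varphi(x')>c\,d(x',\hat x)$, contradicting $c'<c$. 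Therefore $g(\hat x)=y$ and $d(\hat x,x_0)\le t$, which proves the $c'$-covering inclusion $\ball[g(x_0),c't]\subset g(\ball[x_0,t])$; letting $c'\uparrow c$ yields $\sur g(\bx)\ge c$, and then $c\uparrow c^\ast$ finishes the proof.

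The main obstacle is the second direction: one must choose the auxiliary parameters $(r',\varepsilon,c')$ so that the Ekeland point $\hat x$ actually remains inside the ball $\ball(\bx,r)$ where the descent hypothesis is available, while simultaneously keeping $y$ inside $\ball(g(\bx),r)$. Once this bookkeeping is set up, the contradiction between Ekeland's inequality (with slope $c'$) and the strict descent \eqref{hvezda} (with slope $c>c'$) closes the argument.
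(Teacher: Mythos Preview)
Your proof is correct. Both directions are handled cleanly: the necessity direction exploits the covering inclusion at rate $c''>c$ to produce a witness $x'$ with $g(x')=y$ and $d(x',x)\le\varrho(g(x),y)/c''$, which immediately yields \eqref{hvezda}; the sufficiency direction applies Ekeland's variational principle to $\varphi(u)=\varrho(g(u),y)$ with slope $c'<c$, and the strict descent hypothesis then forces $g(\hat x)=y$ by contradiction. The bookkeeping with $r'$ and $\varepsilon$ keeping $\hat x\in\ball(\bx,r)$ and $y\in\ball(g(\bx),r)$ is correctly set up via continuity of $g$ at $\bx$.

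Note, however, that the paper itself does \emph{not} prove Proposition~\ref{criterion1}: it is stated as a known criterion with references to \cite[Corollary~1]{fp}, \cite[Theorem~1b]{i2000}, and \cite[Proposition~2.1]{CF}. Your Ekeland-based argument is exactly the standard one from those sources, and it is also the technique the paper employs in its proof of the closely related Proposition~\ref{propSoft} (the semiregularity analogue). So your approach is both correct and in full agreement with the method the paper uses where it does give proofs.
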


More precisely, Fabian and Preiss \cite{fp} proved only a sufficient condition guaranteeing that $\lopen g(\bx) > 0$. The full version (for set-valued mappings) was shown independently by Ioffe \cite{i2000}.
As in the
case of Theorem~\ref{Gr}, only a tiny modification of the original proof from \cite{fp} yields the statement above (see \cite{CF}).
Although Proposition~\ref{criterion1} is formulated for a single-valued function, it is well-known that the study of regularity  properties for a set-valued mapping $F: X \rightrightarrows Y$ can always be reduced to
the study of the corresponding property for a simple
single-valued mapping, namely, the restriction of the canonical projection from $X \times Y$ onto $Y$,
that is, the assignment ${\rm gph}\, F\ni(x,y)\longmapsto y\in Y$ (e.g., see \cite[Proposition 3]{i2000}). Using this, one gets the following statement for set-valued mappings.

\begin{theorem} \label{crit2}
Let $(X,d)$
 and
$(Y,\varrho)$ be metric spaces and let $F: X\rightrightarrows Y$ be a set-valued mapping having  a localization around $(\bar{x}, \bar{y}) \in \mbox{\rm gph} \, F$ with a complete graph.
Then $\sur F(\bar{x},\bar{y})$ equals to the supremum  of all $c > 0$ for which
there are $r > 0$ and $\alpha \in (0, 1/c) $ such that for any $(x,v) \in \mbox{\rm gph} \, F \cap \big(\ball(\bar{x}, r) \times \ball(\bar{y}, r) \big)$ and any $y \in \ball(\bar{y}, r) \setminus  \{v\}$ there is a pair $(x',v') \in \mbox{\rm gph} \, F$ satisfying
    \begin{eqnarray}\label{hvezda2}
    c \max\{d(x,x'), \alpha \varrho(v,v') \} < \varrho(v,y)- \varrho(v',y).
    \end{eqnarray}
\end{theorem}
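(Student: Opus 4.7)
My plan is to reduce this to the single-valued criterion in Proposition~\ref{criterion1} via the standard device recalled just above: apply the criterion to the canonical projection $g:\gph F\to Y$, $g(x,v):=v$. To make the weighted inequality (\ref{hvezda2}) fall out of the plain inequality (\ref{hvezda}), I equip $\gph F$ with the $\alpha$-weighted box metric
\begin{equation*}
 d_\alpha\big((x,v),(x',v')\big) := \max\{d(x,x'),\, \alpha\varrho(v,v')\},
\end{equation*}
which induces the product topology on $\gph F$, so a localization of $\gph F$ that is complete in the product metric is complete in $d_\alpha$ as well. The map $g$ is $(1/\alpha)$-Lipschitz, hence continuous on $(\gph F,d_\alpha)$, and when restricted to the complete localization its domain equals the whole underlying space, so Proposition~\ref{criterion1} is applicable to $g$.

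The key observation is a local equivalence of openness rates: whenever $\alpha c\le 1$, the mapping $F$ has openness rate $c$ at $(\bar x,\bar y)$ if and only if $g$, viewed as a map on $(\gph F,d_\alpha)$, has openness rate $c$ at $(\bar x,\bar y)$. Indeed,
\begin{equation*}
 g\big(\ball_{d_\alpha}((\bar x,\bar y),t)\big)=F(\ball(\bar x,t))\cap\ball(\bar y,t/\alpha),
\end{equation*}
and once $\alpha c\le1$ one has $\ball(\bar y,ct)\subseteq\ball(\bar y,t/\alpha)$, so the inclusion $\ball(\bar y,ct)\subseteq g(\ball_{d_\alpha}((\bar x,\bar y),t))$ collapses to $\ball(\bar y,ct)\subseteq F(\ball(\bar x,t))$. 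Writing out the criterion of Proposition~\ref{criterion1} for $g$ in the metric $d_\alpha$, the inequality (\ref{hvezda}) becomes $c\max\{d(x,x'),\alpha\varrho(v,v')\}<\varrho(v,y)-\varrho(v',y)$, which is precisely (\ref{hvezda2}).

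The remaining bookkeeping is to reconcile the two ways of localizing the quantification; this is handled by the elementary sandwich
\begin{equation*}
 \ball_{d_\alpha}((\bar x,\bar y),s\min\{1,\alpha\})\subseteq \gph F\cap\big(\ball(\bar x,s)\times\ball(\bar y,s)\big)\subseteq \ball_{d_\alpha}((\bar x,\bar y),s\max\{1,\alpha\}),
\end{equation*}
which lets one pass between the two formulations at the cost of rescaling the radius. Putting everything together: if (\ref{hvezda2}) holds for some $c,r$ and $\alpha\in(0,1/c)$, then applying Proposition~\ref{criterion1} at radius $r\min\{1,\alpha\}$ gives the openness-rate bound for $g$, which transfers to $\sur F(\bar x,\bar y)\ge c$ by the equivalence (available because $\alpha c<1$). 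Conversely, for any $c<\sur F(\bar x,\bar y)$, pick $c'\in(c,\sur F(\bar x,\bar y))$ and set $\alpha:=1/c'\in(0,1/c)$; the equivalence yields the openness-rate bound for $g$ at rate $c'>c$, and Proposition~\ref{criterion1} then delivers a radius $r>0$ whose rescaling to $r\min\{1,1/\alpha\}$ validates (\ref{hvezda2}) for $c$. The only delicate step is the combined matching of the two ball shapes together with the choice of the coupling parameter $\alpha$, which must be strictly below $1/c$ so that the openness-rate equivalence is usable in both directions.
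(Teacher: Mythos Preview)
Your proposal is correct and follows exactly the approach the paper indicates: the paper does not give a detailed proof of this theorem but simply states that it follows from Proposition~\ref{criterion1} via the projection trick $\gph F\ni(x,v)\mapsto v$, which is precisely what you carry out with the $\alpha$-weighted box metric (the same metric the paper later uses in the proof of Proposition~\ref{propSoftSet}). Your write-up is in fact more detailed than the paper's one-line derivation.
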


It is elementary to check that $\psopen F(\bar{x},\bar{y})$ equals to
the \emph{subregularity constant} of $F$ at $(\bx,\by)$ defined in \cite{Kru15} as
\begin{equation}\label{AK15}
  \liminf_{x\to\bx, \ x\notin F^{-1}(\by)} \frac{\dist(\by,F(x))}{\dist(x,F^{-1}(\by))},
\end{equation}
with the convention that the limit in \eqref{AK15} is $\infty$ when $\bx$ is an internal point in $F^{-1}(\by)$.
When $\bx$ is an isolated point in $F^{-1}(\by)$, then $\psopen F(\bar{x},\bar{y})$ coincides with the
\emph{steepest displacement rate}  at $(\bx,\by)$ defined by
Uderzo
in \cite{u} as
\begin{equation}\label{P5.1.1-1}
|F|^{\downarrow}(\bx,\by) := \liminf_{x\to\bx} \frac{\dist(\by,F(x))}{d(\bx,x)},
\end{equation}
with the convention that the limit in \eqref{P5.1.1-1} is $\infty$ when $\bx$ is an isolated point in the domain of $F$.
The inequality $|F|^{\downarrow}(\bx , \by)>0$ is equivalent to the \emph{strong} subregularity of $F$ at $(\bx,\by)$.

There is a similar statement to Theorem~\ref{crit2} guaranteeing
the (strong)
subregularity.
The next theorem combines a portion of  \cite[Corollary~5.8]{Kru15} (with condition (d)) and \cite[Theorem~5.3]{CDK}.
The latter one was formulated in \cite{CDK} for Banach spaces, but its proof remains valid in the present setting.

\begin{theorem} \label{thmCrit}
Let $(X,d)$ and
$(Y,\varrho)$ be metric spaces and let $F:X \tto Y$ be a set-valued mapping having a localization around $(\bar{x}, \bar{y}) \in \mbox{\rm gph} \, F$ with a complete graph.  Then
$\psopen F(\bar{x},\bar{y})$ (respectively,
$|F|^{\downarrow}(\bx,\by)$)
equals to the supremum
of  $c > 0$ for which there exists  $r>0$ such that for any $(x,y)\in\gph F$ with
$x\notin F^{-1}(\by)$ and $d(x,\bx)<r$ (respectively, $0< d(x,\bx)<r$)
and $\varrho(y,\by) <r$, there is  a pair $(u,v)\in\gph F\setminus\{(x,y)\}$ satisfying
\begin{equation}\label{eqCrit}
 c \max\{ d(u,x), r\varrho(v,y)\} < \varrho(y,\by) - \varrho(v,\by).
\end{equation}
\end{theorem}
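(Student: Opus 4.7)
\medskip
\noindent\textbf{Proof plan.} The plan is to reduce everything to the subregularity identity $\psopen F(\bx,\by)\cdot\subreg F(\bx,\by)=1$ from \eqref{eqBasicA} and verify the two inequalities between the supremum in the statement and $\psopen F(\bx,\by)$ separately.

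For the (easy) necessity direction, I would fix $c<\psopen F(\bx,\by)$, pick any $c'\in(c,\psopen F(\bx,\by))$, and use \eqref{eqBasicA} to obtain a neighborhood of $\bx$ on which $\dist(x,F^{-1}(\by))\le(1/c')\dist(\by,F(x))$. Choosing $r>0$ small enough that $r<1/c$ and $\ball(\bx,r)\times\ball(\by,r)$ sits inside both this neighborhood and the complete graph localization, for any admissible $(x,y)$ I would take $u\in F^{-1}(\by)$ with $d(u,x)<\varrho(y,\by)/c'$ and simply set $v:=\by$. The resulting pair differs from $(x,y)$ because $y\ne\by$, and both $c\,d(u,x)$ and $cr\,\varrho(v,y)$ are strictly less than $\varrho(y,\by)=\varrho(y,\by)-\varrho(v,\by)$, confirming \eqref{eqCrit}.

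For the sufficiency direction, I would equip the complete graph localization with the metric $D((x,y),(x',y')):=\max\{d(x,x'),r\varrho(y,y')\}$ and the continuous nonnegative function $\varphi(x,y):=\varrho(y,\by)$, aiming at $\subreg F(\bx,\by)\le 1/c$. Fix $\rho\in(0,r)$ small and $x\in\ball(\bx,\rho)\setminus F^{-1}(\by)$, and set $s:=\dist(\by,F(x))$. If $s\ge c\rho$, then $\dist(x,F^{-1}(\by))\le d(x,\bx)<\rho\le s/c$ is immediate, so assume $s<c\rho$, pick $y\in F(x)$ with $\varrho(y,\by)$ close to $s$, and apply Ekeland's variational principle to $\varphi$ on $(\gph F,D)$ at $(x,y)$ with slope $c$. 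The resulting $(x^\ast,y^\ast)$ satisfies $c\,D((x^\ast,y^\ast),(x,y))\le\varrho(y,\by)$ together with the strict inequality $\varphi(u,v)+c\,D((u,v),(x^\ast,y^\ast))>\varphi(x^\ast,y^\ast)$ for all $(u,v)\ne(x^\ast,y^\ast)$ in the localization. Provided $(x^\ast,y^\ast)$ still lies in the region where the hypothesis applies, the case $x^\ast\notin F^{-1}(\by)$ would, via \eqref{eqCrit}, supply a descent pair contradicting this Ekeland inequality, so $x^\ast\in F^{-1}(\by)$ and
\[
\dist(x,F^{-1}(\by))\le d(x,x^\ast)\le D((x^\ast,y^\ast),(x,y))\le\varrho(y,\by)/c.
\]
Letting $\varrho(y,\by)\searrow s$ yields $\dist(x,F^{-1}(\by))\le s/c$, hence $\subreg F(\bx,\by)\le 1/c$. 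The statement about $|F|^{\downarrow}(\bx,\by)$ is obtained by running the same argument only over $x\ne\bx$ and using that $|F|^{\downarrow}(\bx,\by)>0$ forces $\bx$ to be isolated in $F^{-1}(\by)$, so that the Ekeland limit $x^\ast\in F^{-1}(\by)$ must coincide with $\bx$.

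The delicate step I expect to be the main obstacle is the bookkeeping that guarantees both the Ekeland minimizer $(x^\ast,y^\ast)$ and the descent pair $(u,v)$ produced by the hypothesis remain inside the complete localization on which Ekeland was applied. This is what forces the preliminary case split on the size of $s$ as well as the specific choice of $\rho$ in terms of $r$ and $c$, and it is the place where the auxiliary weight $r$ inside the $\max$ of \eqref{eqCrit} plays its role in controlling $\varrho(v,y^\ast)$.
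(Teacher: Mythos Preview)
The paper does not actually supply its own proof of Theorem~\ref{thmCrit}; it is quoted from \cite[Corollary~5.8]{Kru15} and \cite[Theorem~5.3]{CDK}. Your plan is precisely the standard route those references follow: a direct construction with $(u,v)=(u,\by)$, $u\in F^{-1}(\by)$, for necessity, and Ekeland's principle applied to $\varphi(x,y)=\varrho(y,\by)$ on the weighted graph metric for sufficiency. Both directions are correct as outlined, and the case split on $s=\dist(\by,F(x))$ versus $c\rho$ is exactly how the bookkeeping in \cite{CDK} is handled, so there is no genuine methodological difference.

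Two small refinements you should make when you write it out. First, in the sufficiency part you must argue that the descent pair $(u,v)$ produced by \eqref{eqCrit} at $(x^\ast,y^\ast)$ lies in the \emph{complete localization} on which Ekeland was invoked, not merely in $\gph F$; this follows because $\varrho(v,\by)<\varrho(y^\ast,\by)\le\varrho(y,\by)$ and $d(u,\bx)\le d(u,x^\ast)+d(x^\ast,\bx)<2\varrho(y,\by)/c+\rho$, so choosing $\rho$ small relative to both $r$ and the radius of the localization suffices. Second, your sentence on $|F|^{\downarrow}$ mixes the two directions: the clause ``$|F|^{\downarrow}(\bx,\by)>0$ forces $\bx$ to be isolated in $F^{-1}(\by)$'' belongs to the necessity argument (where one simply takes $(u,v)=(\bx,\by)$), whereas in the sufficiency argument the conclusion $x^\ast=\bx$ comes directly from the hypothesis applied at $(x^\ast,y^\ast)$ with $0<d(x^\ast,\bx)<r$, without any isolatedness assumption. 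Once these are separated, the $|F|^{\downarrow}$ version goes through verbatim.
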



 Note that (sufficient) conditions for (non-strong) subregularity and  semiregularity are much more involved because of their instability  with respect to calm (or Lipschitz) single-valued perturbations (see counterexamples \cite[pp. 200--201]{book}).
More precisely, for these two properties,
the analogues of the following statement (see \cite[Theorems~5E.1 and 5F.1]{book} and \cite[Corollary 2.2]{CDK}) fail
without additional assumptions.

\begin{theorem} \label{thmPerturb}
Let $(X,d)$ be a complete metric space, $(Y,\varrho)$ be a linear metric space with a shift-invariant metric, and $(\bx,\by) \in X \times Y$. Consider a mapping $g :X \to Y$ defined around $\bx$ and a~mapping $F:X \tto Y$ such that $ \by \in F(\bx)$.
\begin{itemize}
 \item[(i)] If $F$ is (strongly) regular around $(\bx,\by)$ and
 $\lip g (\bx) <  \sur F(\bx,\by)$, then
so is $g+F$ around $(\bx,g(\bx) + \by)$ and
$$
 \sur(g+F)(\bx,g(\bx) + \by) \geq  \sur F(\bx,\by) - \lip g (\bx)   > 0.
$$
\item[(ii)] If
$F$ is strongly subregular at $(\bx,\by)$ and
$ \calm g (\bx) <  \psopen F(\bx, \by)$,
then so is $g+F$ at  $(\bx,g(\bx) +\by)$ and
$$
 \psopen(g+F)(\bx, g(\bx) + \by) \geq  \psopen F(\bx, \by) - \calm g (\bx)   > 0.
$$
\end{itemize}
\end{theorem}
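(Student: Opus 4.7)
The plan for (i) is the classical Milyutin-type iterative construction. Fix arbitrary $\lambda < \sur F(\bx,\by)$ and $\mu > \lip g(\bx)$ with $\lambda > \mu$; it suffices to establish openness of $g+F$ with rate $\lambda-\mu$ at $(\bx, g(\bx)+\by)$, as then sending $\lambda \to \sur F(\bx,\by)$ and $\mu \to \lip g(\bx)$ yields the claim. Fix a neighborhood $U \times V$ of $(\bx,\by)$ and $\varepsilon > 0$ so that $\ball[v, \lambda s] \subset F(\ball[x, s])$ for $(x,v) \in \gph F \cap (U \times V)$ and $s \in (0,\varepsilon)$, and shrink $U$ so that $g$ is Lipschitz on it with constant $\mu$. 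For $(\tilde x, \tilde z) \in \gph(g+F)$ sufficiently close to $(\bx, g(\bx)+\by)$, small $t>0$, and arbitrary $z' \in \ball[\tilde z, (\lambda-\mu)t]$, set $t_0 := (\lambda-\mu)t/\lambda$, $t_n := (\mu/\lambda)^n t_0$ (so $\sum_n t_n = t$), and $x_0 := \tilde x$. Inductively apply the openness of $F$ at $(x_n,\, z' - g(x_{n-1})) \in \gph F$ to pick $x_{n+1} \in \ball[x_n, t_n]$ with $z' - g(x_n) \in F(x_{n+1})$; feasibility reduces, via shift-invariance of $\varrho$, to $\varrho(g(x_n), g(x_{n-1})) \leq \lambda t_n$, which follows from $\mu\, d(x_{n-1}, x_n) \leq \mu t_{n-1} = \lambda t_n$. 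Since $X$ is complete, $(x_n)$ converges to some $x^* \in \ball[\tilde x, t]$, and local closedness of $\gph F$ (an implicit hypothesis, analogous to the complete-graph assumption in Theorem~\ref{crit2}) yields $z' - g(x^*) \in F(x^*)$, giving $z' \in (g+F)(x^*)$. The strong half of (i) follows because if $z \in (g+F)(x_1) \cap (g+F)(x_2)$ near the reference and $y_i := z - g(x_i) \in F(x_i)$, the Lipschitz single-valued localization of $F^{-1}$ with constant $\ell$ slightly above $1/\lambda$ gives $d(x_1, x_2) \leq \ell\, \varrho(y_1, y_2) \leq \ell\mu\, d(x_1, x_2)$, and arranging $\ell\mu < 1$ forces $x_1 = x_2$.

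Part (ii) I would handle by a direct triangle-inequality computation. Pick $\kappa > \subreg F(\bx,\by) = 1/\psopen F(\bx,\by)$ and $\mu > \calm g(\bx)$ with $\kappa\mu < 1$. For $x$ near $\bx$ and $y \in (g+F)(x)$ near $g(\bx)+\by$, write $y = g(x) + w$ with $w \in F(x)$ close to $\by$. Strong subregularity of $F$ yields $d(\bx, x) \leq \kappa\, \varrho(\by, w)$, while shift-invariance of $\varrho$ and the calmness of $g$ give
\begin{equation*}
\varrho(\by, w) = \varrho(g(\bx)+\by,\, g(\bx)+w) \leq \varrho(g(\bx)+\by,\, y) + \varrho(g(\bx), g(x)) \leq \varrho(g(\bx)+\by,\, y) + \mu\, d(\bx, x).
\end{equation*}
Combining produces $(1 - \kappa\mu)\, d(\bx, x) \leq \kappa\, \varrho(g(\bx)+\by,\, y)$, showing strong subregularity of $g+F$ with modulus at most $\kappa/(1-\kappa\mu)$ (uniqueness of the pre-image corresponds to the case $y = g(\bx)+\by$); taking reciprocals and optimizing over $\kappa$ and $\mu$ delivers $\psopen(g+F)(\bx, g(\bx)+\by) \geq \psopen F(\bx,\by) - \calm g(\bx)$.

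The main obstacle is the iteration in (i): the geometric step sizes $t_n$ must be chosen to keep all iterates and target points inside the openness neighborhood $U \times V$, meet the per-step feasibility $\mu t_{n-1} \leq \lambda t_n$, and sum to at most $t$, while the concluding limit step needs some local-completeness property of $\gph F$ (otherwise the sequence $(x_{n+1}, z'-g(x_n)) \in \gph F$ need not have its limit in $\gph F$). Part (ii) is, by contrast, essentially the shift-invariant triangle inequality applied to the unpacked definitions.
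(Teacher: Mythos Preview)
The paper does not supply a proof of Theorem~\ref{thmPerturb}; it is stated in the Introduction with citations to \cite[Theorems~5E.1 and 5F.1]{book} and \cite[Corollary~2.2]{CDK}, so there is nothing in the paper to compare against directly.

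That said, your argument is the standard one and is essentially what one finds in those references. The Milyutin--Lyusternik--Graves iteration you sketch for~(i) is exactly the classical construction: geometric step sizes $t_n=(\mu/\lambda)^n t_0$ with $\sum t_n=t$, each step using the linear openness of $F$ together with the Lipschitz bound on $g$ (via shift-invariance of $\varrho$) to move the target by $\lambda t_n$. Your indexing slips at the initial step (the phrase ``at $(x_n,\,z'-g(x_{n-1}))$'' presupposes $x_{-1}$), but the intended base case is clear: from $\tilde z-g(\tilde x)\in F(\tilde x)$ and $\varrho(\tilde z,z')\le(\lambda-\mu)t=\lambda t_0$ one obtains $x_1$. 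You are also right to flag local closedness of $\gph F$ as an implicit hypothesis needed for the limit step; in the cited sources this is part of the standing assumptions. The strong-regularity half of~(i) via the contraction inequality $d(x_1,x_2)\le\ell\mu\,d(x_1,x_2)$ is likewise the textbook argument.

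For~(ii), your direct computation is again the usual proof: strong subregularity gives $d(\bx,x)\le\kappa\,\varrho(\by,w)$, the shift-invariant triangle inequality bounds $\varrho(\by,w)$ by $\varrho(g(\bx)+\by,y)+\mu\,d(\bx,x)$, and rearranging with $\kappa\mu<1$ yields the claim. No issues here.
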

\noindent The above statement fails if a perturbation is set-valued (see \cite[Example 5I.1]{book} and \cite[p. 5]{CDK}).

Regularity as well as strong (sub)regularity are known to play a key role  in  the local convergence analysis for Newton-type iterative schemes for solving a generalized equation, introduced by Robinson in \cite{rob}, which reads as:
\begin{equation} \label{GE}
\mbox{Find}\quad x\in X \quad \mbox{such that}\quad f(x)+F(x) \ni 0,
\end{equation}
where $X$ and $Y$ are (real) Banach spaces, $f: X \to Y$ is a single-valued (possibly nonsmooth) mapping, and $F: X \tto Y$ is a set-valued mapping with closed graph.  This model has been used to describe
in a unified way various problems such as equations (when $F \equiv 0$), inequalities (when $Y = \R^n$ and
$F \equiv \R^n_+$), variational inequalities (when $Y=X^*$ and $F$ is the normal cone mapping corresponding to a closed convex subset of  $X$ or
more broadly the subdifferential mapping of a~convex function on $X$).

The Newton iteration  for \eqref{GE} with a smooth function $f$,   also known as the Josephy-Newton method \cite{josephy},  has the form
\begin{equation}\label{NJ}
f(x_k) + f'(x_k)(x_{k+1}-x_k) +F(x_{k+1})\ni 0 \quad \mbox{for each} \quad  k\in \N_0:=\{0\}\cup \N \mbox{ and a given } x_0 \in X.
\end{equation}
 From the numerical point of view, it is clear that the auxiliary inclusions above cannot be solved exactly because of the finite precision arithmetic and rounding errors.  Moreover, it can be much quicker to find an inexact solution at each step which has a sufficiently small residual. Various (in)exact methods were proposed in the literature (see \cite{ISbook} for an in-depth study and a vast bibliography, or \cite{KKbook} and references therein).  In order to represent inexactness, Dontchev and Rockafellar proposed
 in \cite{in}  an inexact version of  the iteration
 \eqref{NJ} in which,  for
given $k \in \N_0$ and $x_k \in X$, the next iterate $x_{k+1} \in X $ is determined as a  {\em coincidence point } of the mapping on the left-hand side of \eqref{NJ} and a mapping {$R_k: X \times X \tto Y$} which models inexactness, that is,
\begin{equation} \label{inNJ}
{\big(}f(x_k) + f'(x_k)(x_{k+1}-x_k) +F(x_{k+1}){\big)}\cap R_k(x_k, x_{k+1})\neq \emptyset.
\end{equation}

Now, we describe the structure of our note in detail as well as the relation of the results presented and the existing ones.
In Section~\ref{secCompare}, we recall that there is a clear link between semiregularity and openness at a point similar to \eqref{eqBasic} and \eqref{eqBasicA}. Next, we remark that, for particular mappings, semiregularity can imply regularity and that the corresponding moduli can be easily computed (as in the case of a continuous linear operator). On the other hand, we provide examples illustrating the differences. Proposition~\ref{prop02}(ii) slightly generalizes known results that the usual openness implies the \emph{linear} openness under a certain ``convexity" assumption  on the graph of the mapping under consideration.

In Section~\ref{secCriteria}, we
discuss both primal and dual  \emph{infinitesimal}  conditions. More precisely, new slope-based necessary as well as sufficent conditions are obtained (Theorem~\ref{AK-T1.9}) and the dual necessary condition is recalled (Theorem~\ref{thmDual}). Theorem~\ref{thmKaluza}, which seems to be new, is a finite-dimensional analogue of Theorem~\ref{Gr} and its corollaries generalize existing results in one, in our opinion, very important direction for applications - the usual openness is strengthened to \emph{linear} openness. We show that a similar approach yields a statement for set-valued mappings satisfying certain ``strong monotonicity/ellipticity" assumptions (Theorem~\ref{thmKaluzeSetValued}) and present corollaries correcting some statements from the literature (cf. Remark~\ref{rmSVKAL}).

In Section~\ref{secSum}, we prove general  necessary as well as sufficient conditions in the spirit of Theorems~\ref{crit2} and \ref{thmCrit}, which are known to provide short and elegant proofs of various regularity statements in the literature \cite{IoffeSurvey, CF}. To prove set-valued versions of these conditions, we use the standard ``projection trick" described above. We also provide a completely new,  elementary, and short proof that the sum of two set-valued mappings is semiregular provided that one mapping is regular while the other is pseudo-Lipschitz. Note that neither  error bounds nor the slopes are needed; it is enough to use a statement for single-valued mappings, and, more importantly, the key steps and the length of the proof remain the same when one applies its set-valued version.

In Section~\ref{secNewton},
we analyze an  inexact Newton-type iteration for the case when the function $f$ in \eqref{GE} is not necessarily differentiable.
Specifically, we introduce a  mapping    ${\cal H}: X\tto
{\cal L}(X,Y)$  viewed as a {\em generalized set-valued derivative} of the function $f$, and consider the following iteration:
{\em Given an index
$k \in \N_0$ and a point $x_k \in X$,  choose any  $A_k \in {\cal H}(x_k)$ and then
 find $x_{k+1} \in X$  satisfying }
\begin{equation}
\label{niN}
 {\big(} f(x_k)+ A_k(x_{k+1}-x_k)+F(x_{k+1}){\big)}\cap R_k(x_k, x_{k+1}) \neq \emptyset.
 \end{equation}
Semiregularity of $f(x_k)+ A_k(\cdot -x_k)+F - R_k(x_k, \cdot)$ turns out to play a fundamental role in the existence of the next iterate close enough to the current one. The case when the mappings $R_k$  depend on the current iterate $x_k$ only, was studied in \cite{CDG}. The proof of the convergence result is divided into two steps.  Step 1 uses the classical statements and establishes  \emph{uniformity} of the assumed regularity, that is, that the constants and neighborhoods can be taken the same. The semiregularity of the sum is needed in Step 2 which, of course, can be done via a (complicated) double fixed-point theorem as in \cite{in}. We show that the perturbation result is strong enough to obtain the conclusion (as in the usual case) obtaining in this way a completely different proof from \cite{in}.

\paragraph{
Notation and terminology.}
 When we write ${f: X\to Y}$ we mean that $f$ is a (single-valued) mapping acting from $X$ into $Y$
while  ${F:X \tto Y}$ is a  mapping from $X$ into $Y$ which may be set-valued. The set $\dom F:=\{ x \in X:\; F(x)\neq\emptyset\}$
is the {\it domain} of $F$, the  \emph{graph} of $F$ is the set $\gph F: = \{
(x,y)\in X\times Y: \ y\in F(x)\}$ and the \emph{inverse} of $F$ is the mapping $Y \ni y \longmapsto \{x \in X : \ y\in F(x)\}=: F^{-1}(y) \subset X$;
thus $F^{-1} :Y\rightrightarrows X$.
In any metric space, $\ball[x,r]$ denotes the closed ball centered at $x$ with a radius $r > 0$ and $\ball(x,r)$ is the corresponding open ball.
$\ball_X$ and $\mathbb{S}_X$ are respectively the \emph{closed unit ball} and the \emph{unit sphere} in a
normed space $X$. The \emph{distance from a point} $x$ to a subset $C$ of a metric space $(X,d)$ is $\dis(x,C):= \inf \{d(x,y): \ y \in C \}$. We use the convention that $ \inf \emptyset: = \infty$ and as we work with non-negative quantities we set $\sup \emptyset: = 0$. If a set is
a
singleton we identify it with its only element, that is, we write $a$ instead of $\{ a \}$.
The symbol ${\mathcal L}(X,Y)$ denotes the space of all linear bounded operators from a Banach space $X$ into a~Banach space $Y$. Then $\R^{m\times n} := \mathcal{L}(\R^n,\R^m)$ and $X^* := {\mathcal L}(X,\mathbb{R})$.
Given $A \in {\mathcal L}(X,Y)$, the operator $A^*:Y^* \to X^*$ denotes the adjoint (dual, transpose) operator to $A$. The transpose of  a matrix $A \in \R^{m\times n}$ is $A^T \in \R^{n \times m}$. Given a set  $\mathcal{A}$ in ${\mathcal L}(X,Y)$,  the {\it measure of noncompactness} $\chi (\mathcal{A})$  of $\mathcal{A}$ is defined as
$$
\chi (\mathcal{A}) := \inf\big\{  r > 0: \ \mathcal{A} \subset \mathcal{F} + r\ball_{\mathcal{L}(X,Y)} \mbox{ for some finite }  {\mathcal F} \subset  \mathcal{A} \big\}.
$$
Given an extended real-valued function $\varphi: X \to \R \cup \{\infty\} $ and a point $x \in X$, the  \emph{limes inferior} of $\varphi$ at $x$ is defined by
$$
 \liminf_{u\to x} \varphi (u) :=\sup_{r>0} \inf_{u\in \ball(x,r)} \varphi(u).
$$

\section{Relationship among regularity concepts} \label{secCompare}

Let us start with a simple observation \cite[Proposition 2.4]{DS2012} and \cite[Theorem 6(i)]{AK2009}:

\begin{proposition} \label{prop01}
Consider a mapping  $F:X \tto Y$ between metric spaces $(X,d)$ and $(Y, \varrho)$ and a~point $(\bx, \by) \in \gph F$. Then 
\begin{equation} \label{eqBasicB}
 \lopen F(\bar{x},\bar{y}) \cdot \semireg F (\bx,\by) = 1. 
\end{equation}
\end{proposition}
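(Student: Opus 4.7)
The plan is to prove the identity \eqref{eqBasicB} by showing two matching inequalities between $\lopen F(\bx,\by)$ and $1/\semireg F(\bx,\by)$, handling the extended-real conventions along the way. Both directions amount to essentially one observation: the openness condition \eqref{suro} applied to the single value $t=\varrho(\by,y)/c$ is, up to strict versus non-strict inequalities, exactly the semiregularity estimate \eqref{defMHR} for that $y$.

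First I would prove $\semireg F(\bx,\by)\le 1/c$ for every $c$ with $0<c<\lopen F(\bx,\by)$. Fix such a $c$ and pick $\varepsilon>0$ from \eqref{suro}. For $y$ in the neighborhood $V:=\ball(\by,c\varepsilon)$ with $y\ne\by$, set $t:=\varrho(\by,y)/c\in(0,\varepsilon)$; then $y\in\ball[\by,ct]\subset F(\ball[\bx,t])$, so some $x\in\ball[\bx,t]$ satisfies $y\in F(x)$, i.e.\ $x\in F^{-1}(y)$ with $d(\bx,x)\le t=\varrho(\by,y)/c$. Hence $\dist(\bx,F^{-1}(y))\le\varrho(\by,y)/c$ on $V$ (the case $y=\by$ is trivial since $\bx\in F^{-1}(\by)$). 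Taking the supremum over $c<\lopen F(\bx,\by)$ gives $\semireg F(\bx,\by)\cdot\lopen F(\bx,\by)\le 1$.

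For the reverse inequality, I would fix $\kappa>\semireg F(\bx,\by)$, pick $\delta>0$ so that \eqref{defMHR} holds on $V=\ball(\by,\delta)$, and fix any $c\in(0,1/\kappa)$. Choose $\varepsilon:=\delta/c$. For $t\in(0,\varepsilon)$ and any $y\in\ball[\by,ct]$ with $y\ne\by$, we have $y\in V$, hence
\[
\dist(\bx,F^{-1}(y))\le\kappa\,\varrho(\by,y)\le\kappa ct<t,
\]
where the final strict inequality (which is the one delicate point) comes from $\kappa c<1$. This strict inequality is what lets me extract an actual point $x\in F^{-1}(y)\cap\ball[\bx,t]$ from the infimum defining the distance, so $y\in F(\ball[\bx,t])$; and $y=\by$ is handled by $\by\in F(\bx)$. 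Thus \eqref{suro} holds and $\lopen F(\bx,\by)\ge c$, so letting $c\uparrow 1/\kappa$ and then $\kappa\downarrow\semireg F(\bx,\by)$ yields $\lopen F(\bx,\by)\cdot\semireg F(\bx,\by)\ge 1$.

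Finally, I would note that the two bounds together give the identity whenever both moduli are finite and positive, while the conventions $0\cdot\infty=\infty\cdot 0=1$ and $\inf\emptyset=\infty$, $\sup\emptyset=0$ cover the degenerate cases: if $\semireg F(\bx,\by)=\infty$, the first inequality forces $\lopen F(\bx,\by)=0$, and symmetrically for the other extreme. I do not foresee a genuine obstacle; the only point requiring attention is keeping track of strict versus non-strict inequalities so that the distance estimate can be upgraded to the actual membership needed for openness.
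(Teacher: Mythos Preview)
Your proof is correct. The paper itself does not supply a proof of this proposition; it merely cites \cite[Proposition~2.4]{DS2012} and \cite[Theorem~6(i)]{AK2009} for this ``simple observation,'' so there is no in-paper argument to compare against. Your two-inequality argument is the standard elementary one, and your handling of the only delicate point---using $\kappa c<1$ to get a \emph{strict} distance bound so that an actual preimage point in $\ball[\bx,t]$ can be extracted---is exactly right; the degenerate cases are correctly absorbed by the stated conventions.
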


The relationship among various properties is summarized in the following statement:

\begin{proposition} \label{prop02}
Consider a mapping  $F:X \tto Y$ between metric spaces $(X,d)$ and $(Y, \varrho)$ and a~point $(\bx, \by) \in X \times Y$. Then
\begin{itemize}
\item[(i)] $ \lopen F(\bar{x},\bar{y}) \geq \liminf\limits_{(x,y) \to (\bx,\by), \ y\in F(x)} \lopen F({x}, {y}) \geq   \sur F (\bx,\by)$.
\item[(ii)] Suppose that $X$ and $Y$ are normed spaces and that $F$ has a locally star-shaped graph at  $(\bar x, \bar y)$, that is, there is $a \in (0,1]$ such that   $  (1-t) \, (\bar x, \bar y) + t \gph F \subset \gph F$ for each $ t \in [0,a]$.
 If there are positive constants $\alpha$ and $\beta$ such that
\begin{equation} \label{eqalphabeta}
  \ball [ \bar y, \beta] \subset F(\ball[\bar{x},\alpha]),
\end{equation}
  then  $\lopen F(\bar{x},\bar{y}) \geq \beta/\alpha$.
\item[(iii)] If $X$ and $Y$ are normed spaces and $F$ has a convex graph then  $ \lopen F(\bar{x},\bar{y}) = \sur F (\bx,\by)$.
\item[(iv)] If $X$ and $Y$ are Banach spaces and $F$ is a closed convex process, that is, $\gph F$ is a closed convex cone in $X \times Y$, then
$$
 \lopen F(0,0) = \sur F (0,0) = \sup\{\varrho > 0: F(\ball_X) \supset \varrho \ball_Y \} = \inf\{\|x^*\|: x^* \in F^*(\mathbb{S}_{Y^*}) \},
$$
where $F^*:Y^* \to X^*$ is the adjoint process to $F$ defined by
$$
  F^*(y^*) = \{ x^* \in X^*: \langle x^*, x \rangle \leq
\langle y^*, y \rangle \mbox{ for each } (x,y) \in \gph F\}
.$$
\end{itemize}
\end{proposition}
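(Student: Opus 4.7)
Part (i) is essentially a book-keeping unfolding of the definition of $\sur F(\bar x,\bar y)$: if $c<\sur F(\bar x,\bar y)$, there exist a neighborhood $U\times V$ of $(\bar x,\bar y)$ and $\varepsilon>0$ such that the openness inclusion holds at every $(x,y)\in\gph F\cap(U\times V)$ with the common parameter $\varepsilon$, giving $\lopen F(x,y)\ge c$ at all such points; letting $c\uparrow \sur F(\bar x,\bar y)$ proves the second inequality. The first inequality is immediate since $(\bar x,\bar y)$ itself lies in every neighborhood of itself, so the infimum inside the liminf never exceeds $\lopen F(\bar x,\bar y)$. For (ii), my plan is to rescale: given $t\in(0,a\alpha]$ and $y\in\ball[\bar y,(\beta/\alpha)t]$, I set $s:=t/\alpha\in(0,a]$ and $\tilde y:=\bar y+s^{-1}(y-\bar y)$, so that $\|\tilde y-\bar y\|\le\beta$ and the hypothesis yields $\tilde x\in\ball[\bar x,\alpha]$ with $\tilde y\in F(\tilde x)$; applying the star-shape condition to $(1-s)(\bar x,\bar y)+s(\tilde x,\tilde y)$ produces an element of $\gph F$ whose second coordinate is exactly $y$ and whose first coordinate is within $s\alpha=t$ of $\bar x$.

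For (iii), the inequality $\lopen F(\bar x,\bar y)\ge \sur F(\bar x,\bar y)$ is (i). The reverse inequality requires transferring the openness inclusion at $(\bar x,\bar y)$ to nearby base points by convexity, while keeping the openness radius uniform. Fix $c'<c:=\lopen F(\bar x,\bar y)$ and pick $\alpha>0$ with $\ball[\bar y,c'\alpha]\subset F(\ball[\bar x,\alpha])$. For $(x_0,y_0)\in\gph F$ with $\max\{\|x_0-\bar x\|,\|y_0-\bar y\|\}<\delta$ and any $y\in\ball[y_0,\beta_0]$, I would extrapolate $\xi:=y_0+\lambda^{-1}(y-y_0)$ with $\lambda:=\beta_0/(c'\alpha-\delta)$ (which keeps $\xi\in\ball[\bar y,c'\alpha]$), pick $\eta\in\ball[\bar x,\alpha]$ with $\xi\in F(\eta)$, and use convexity of $\gph F$ to obtain $(1-\lambda)(x_0,y_0)+\lambda(\eta,\xi)\in\gph F$; this element has $Y$-coordinate exactly $y$ and $X$-coordinate within $\lambda(\alpha+\delta)$ of $x_0$. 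Setting $\beta_0=c''\alpha_0$ for any $c''<c'$ and choosing $\delta$ sufficiently small forces $\lambda(\alpha+\delta)\le\alpha_0$; the admissible range of $\alpha_0$ is independent of the base point, so $\sur F(\bar x,\bar y)\ge c''$, and letting $c''\uparrow c$ finishes.

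For (iv), the first equality is (iii). The second comes from cone homogeneity: since $\gph F$ is a cone, $F(\ball[0,t])=tF(\ball_X)$ for $t>0$, making $\ball[0,ct]\subset F(\ball[0,t])$ equivalent to $c\ball_Y\subset F(\ball_X)$ and hence independent of $t$. The third equality is a Hahn-Banach duality. The elementary direction shows $\|x^*\|\ge\varrho_0:=\sup\{\varrho:F(\ball_X)\supset\varrho\ball_Y\}$ for every $y^*\in\mathbb{S}_{Y^*}$ and $x^*\in F^*(y^*)$: given $\varrho<\varrho_0$, choose $y_n\in\ball_Y$ with $\langle y^*,-y_n\rangle\to 1$; by openness, $\varrho y_n\in F(x_n)$ for some $\|x_n\|\le 1$, and the defining inequality of $F^*$ gives $\langle x^*,-x_n\rangle\ge\varrho\langle y^*,-y_n\rangle\to\varrho$, forcing $\|x^*\|\ge\varrho$. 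For the reverse inequality, I would first invoke the Robinson-Ursescu open mapping theorem to reduce a violation $\varrho\ball_Y\not\subset F(\ball_X)$ (for $\varrho>\varrho_0$) to the existence of $y_0\in\varrho\ball_Y\setminus\overline{F(\ball_X)}$, separate $y_0$ from $\overline{F(\ball_X)}$ by some $y^*\in\mathbb{S}_{Y^*}$ with support value strictly below $\varrho$, and then apply Hahn-Banach once more to the sublinear function $x\mapsto\inf_{y\in F(x)}\langle y^*,y\rangle$ (sublinear by positive homogeneity and subadditivity of the convex process $F$) to produce a linear $x^*\in F^*(y^*)$ with $\|x^*\|<\varrho$.

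The main obstacle lies in the final Hahn-Banach step of (iv): the sublinear function above may take the value $-\infty$ along lines in $F(0)$ on which $y^*$ does not vanish, so a direct extension fails. One standard way around this is to choose the separating $y^*$ so that it annihilates the lineality of $F(0)$ (using the freedom in the Hahn-Banach separation); another is to abandon the pointwise identity between $\inf\{\|x^*\|:x^*\in F^*(y^*)\}$ and the support function of $F(\ball_X)$ at $y^*$, which can genuinely fail pointwise while both infima over $\mathbb{S}_{Y^*}$ still coincide with $\varrho_0$. The only mild delicacy in (iii) is the uniformity of the $\varepsilon$-parameter, but this comes for free from the extrapolation because the bound on $\lambda(\alpha+\delta)$ depends only on $\alpha$ and $\delta$, not on the particular nearby base point $(x_0,y_0)$.
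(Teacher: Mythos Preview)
Your arguments for (i) and (ii) coincide with the paper's: both are direct from the definitions and a rescaling through the star-shape condition. For (iii) the paper takes a slightly different path: after fixing $c<\tilde c<\lopen F(\bx,\by)$ and $\alpha$ with $\tilde c\alpha\ball_Y\subset F(\alpha\ball_X)$, it chooses $r$ small and records the crude single-scale inclusion $\ball[y,c(\alpha+r)]\subset F(\ball[x,\alpha+r])$ for every $(x,y)\in\gph F$ with $\|x\|,\|y\|\le r$, and then \emph{reapplies part (ii)} at the new base point $(x,y)$ (a convex graph being star-shaped with $a=1$ at each of its points) to upgrade this to all scales $t\in(0,\alpha+r)$. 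Your direct extrapolation--interpolation achieves the same conclusion in one move without invoking (ii); the paper's route is shorter once (ii) is available, whereas yours is self-contained and makes the uniformity of the $t$-range explicit.

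For (iv) the paper gives no proof at all: it simply cites \cite[Theorem~7.9]{IoffeSurvey}. Your sketch actually goes further. The cone-homogeneity argument for the second equality and the ``elementary direction'' of the duality are correct, and you have put your finger on the genuine obstacle in the reverse Hahn--Banach step: the function $x\mapsto\inf_{y\in F(x)}\langle y^*,y\rangle$ can take the value $-\infty$ when $y^*$ does not annihilate the lineality space of $F(0)$, so a naive extension fails. Either of the repairs you suggest can be carried through, but the details are exactly what the cited reference supplies; as it stands your (iv) is an honest outline with an acknowledged gap rather than a complete proof, which is still more than the paper itself provides.
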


\begin{proof} Statement (i) follows immediately from the definitions of $\sur F(\bx, \by)$  and the limes inferior, while (iv) is \cite[Theorem 7.9]{IoffeSurvey}.
Assume without any loss of generality that $\bx = 0$ and $\by =0$.

(ii)  By assumption, there is $a  \in (0,1]$ such that
$  \tau  \gph F  \subset \gph F  $ for each  $ \tau \in [0,a]$. Then \eqref{eqalphabeta} implies that
$$
  \tau \beta \ball_Y \subset F(\tau\alpha \ball_X) \quad \mbox{for each} \quad \tau \in [0,a].
$$
 Indeed, fix any such $\tau$. Pick an arbitrary $y \in \tau\beta \ball_Y$. Then $v:=y/\tau \in \beta \ball_Y$. By \eqref{eqalphabeta}, there is $u \in X$ such that $v \in F(u)$ and $\|u\| \leq \alpha$. Then $x:=\tau u \in \tau\alpha \ball_X$. Moreover, $(x,y) = \tau (u,v) \in \tau \gph F \subset \gph F$. Thus $y \in F(x)$.

 \noindent Set $c:= \beta/\alpha$ and $\varepsilon := \alpha a$. Fix any $t \in (0, \varepsilon)$. Then $\tau:= t/\alpha \in (0, a)$, and consequently,
$$
  F(t \ball_X) = F(\tau \alpha \ball_X) \supset \tau \beta \ball_Y = ct \ball_Y.
 $$
 (iii)  By (i), it suffices to show that $\lopen F(0,0) \leq \sur F(0,0)$. Fix arbitrary $c$, $\tilde c \in (0,\lopen F(0,0))$ with $c < \tilde c$.  Find $\alpha \in (0,1)$ such that $\tilde c \alpha \ball_Y \subset F(\alpha \ball_X)$, and then $r > 0$ such that $c(\alpha + r) +r < \tilde c \alpha$.
 Fix any $(x,y) \in \gph F$ with $\|x\| \leq r$ and $\|y\| \leq r$. Then
 \begin{eqnarray*}
  \ball[y,c(\alpha+r)] \subset (c(\alpha+r) + r) \ball_Y \subset \tilde c \alpha \ball_Y  \subset F(\alpha  \ball_X) \subset F(\ball[x,\alpha + r]).
 \end{eqnarray*}
 As in the proof of (ii), with $a:=1$, $\beta :=c (\alpha+r)$, and $(\bx, \by,\alpha)$ replaced by $(x,y,\alpha+r)$,  we conclude that for  any $t \in (0,\alpha+r)$ we have $ \ball[y,ct] \subset F(\ball[x,t])$. Since $\alpha$ and $r$ are independent of $(x,y)$, we obtain that   $ \sur F(0,0) \geq c$. Letting $c \uparrow \lopen F(0,0)$ we get the desired estimate.

\end{proof}

To illustrate the difference between the regularity properties we provide the following examples.

\begin{example} \label{ex01} \rm
 Consider a function $f: \R \to \R$ defined by
 $$
  f(x) =
  \left\{
     \begin{array}{ll}
       x + \frac{x^3}{|x|} \left|\sin\left(\frac{1}{x} \right)\right| &  \mbox{ if} \quad x \neq 0,\\
       0                   &  \mbox{ if} \quad x = 0.\\
     \end{array}
  \right.
 $$
 Then $f$ is locally Lipschitz around $0$, Fr\'echet differentiable at $0$ (and almost everywhere) but not strictly differentiable at $0$, and there is no neighborhood $U$ of $0$ such that $f$ is differentiable on $U$. Moreover, $f$ is semiregular (not strongly), strongly subregular at $0$, and   $\sur f(0) = \liminf\limits_{x \to 0} \lopen f(x) = 0$, while $f'(0)= \lopen f(0) = \psopen f(0) = 1$. In particular, the first inequality in Proposition~\ref{prop02} (i) is strict.
\end{example}

\begin{example} \rm
Consider a function $f:\R\tto\R$ given by
$$
f(x):=
\begin{cases}
x,&\mbox{if } x\le0,
\\
x-\frac{1}{n},&\mbox{if } \frac{1}{n}<x\le\frac{1}{n-1}, \quad n=3, 4, \ldots,
\\
x-\frac{1}{2},&\mbox{if } x>\frac{1}{2},
\end{cases}
$$
and its epigraphical mapping $F(x):=\{y\in\R : y\ge f(x)\}$, $x\in\R$.
It is easy to check that $\lopen F(x,y)=\infty$ if $y>f(x)$ and $\lopen F(x,y)=1$ if $y=f(x)$.
Hence,
$$
\lim\limits_{r\downarrow 0} \inf{ \{ \lopen F(x,y): \ (x,y) \in \gph F \cap \big(\ball(0,r) \times \ball(0,r) \big) \}}=1.
$$
Take any $r>0$ and $\varepsilon>0$, and choose an index $n\in\N$ such that $x_n:=\frac{1}{n}+\frac{1}{n^2}<r$ and $t_n:=\frac{1}{n}<\varepsilon$.
Then $y_n:=f(x_n)=\frac{1}{n^2}<r$ and
$$\sup \left\{c>0: \ball[y_n,ct_n] \subset F(\ball[x_n,t_n]) \right\}=\frac{1}{n}.$$
Hence,
$$
\inf_{(x,y) \in \gph F \cap \big(\ball(0,r) \times \ball(0,r) \big)}\inf_{t\in(0,\varepsilon)} \sup\{c>0: \ball[y,ct] \subset F(\ball[x,t])\}=0,
$$
and therefore $\sur F(0,0) = 0$.  Consequently, the second inequality in Proposition~\ref{prop02} (i) is strict.
\end{example}

\section{Primal and dual infinitesimal conditions} \label{secCriteria}

\def\SeR{{\rm SeR}[F](\bx,\by)}
\def\SSeR{\overline{|\nabla{F}|}{}_{\rm SeR}^\diamond(\bx,\by)}
\def\lsc{lower semicontinuous}
\def\usc{upper semicontinuous}
\def\LHS{left-hand side}
\def\RHS{right-hand side}
\newcommand{\de}{\delta}
\newcommand{\la}{\lambda}
\newcommand{\ga}{\gamma}

It is easy to check that $\lopen F(\bar{x},\bar{y})$ equals to
\begin{equation}\label{AK18}
\liminf_{y\to\by, \, y\notin F(\bx)} \frac{\varrho(y,\by)}{\dist(\bx,F^{-1}(y))},
\end{equation}
with the convention that the limit in \eqref{AK18} is $\infty$ when $\by$ is an internal point in $F(\bx)$.

\begin{theorem} \label{AK-T1.9}
Let $(X,d)$ be a metric space, $(Y,\varrho)$ a complete metric space, and let $F:X \tto Y$ be a set-valued mapping such that $(\bar{x}, \bar{y}) \in \mbox{\rm gph} \, F$ and the function $y\longmapsto\dist(\bx,F^{-1}(y))$ is upper semicontinuous near $\by$.
Set
\begin{equation}\label{AK-T1.9-0}
\varphi(y):=
\begin{cases}
\displaystyle
\frac{\varrho(y,\by)}{\dist(\bx,F^{-1}(y))},& \mbox{if } y\ne\by,
\\
0,&\mbox{otherwise},
\end{cases}
\end{equation}
\begin{equation}\label{AK-T1.9-1}
\SSeR:=\liminf_{y\to\by, \ y\notin F(\bx)} \varrho(y,\by)\sup_{v\ne y} \frac{\displaystyle\varphi(y)- \varphi(v)}{\varrho(y,v)}.
\end{equation}
Then
\begin{equation}\label{AK-T1.9-2}
\frac{1}{2}\SSeR\le  \lopen F(\bar{x},\bar{y}) \le\SSeR.
\end{equation}
In particular, if numbers $c > 0$ and  $r>0$ are
such that, for any $y\in\ball[\by,r]\setminus F(\bx)$, there is  a~vector $v\in Y$ satisfying
\begin{equation*}
\varrho(y,\by)\left(\varphi(y)- \varphi(v)\right)>c\,\varrho(y,v),
\end{equation*}
then $ \lopen F(\bar{x},\bar{y}) \ge c/2$.
\end{theorem}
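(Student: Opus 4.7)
The plan is to use the identity $\lopen F(\bx,\by)=\liminf_{y\to\by,\, y\notin F(\bx)} \varphi(y)$, i.e.\ \eqref{AK18} rewritten in the notation of \eqref{AK-T1.9-0}, and to treat the two halves of \eqref{AK-T1.9-2} separately. For the upper bound $\lopen F(\bx,\by)\le\SSeR$, I would insert the competitor $v=\by$ into the supremum in \eqref{AK-T1.9-1}: since $\varphi(\by)=0$, this gives $\varrho(y,\by)\sup_{v\ne y}(\varphi(y)-\varphi(v))/\varrho(y,v)\ge\varphi(y)$ for every $y\notin F(\bx)$, and passing to the liminf finishes this direction.

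For the lower bound $\frac{1}{2}\SSeR\le\lopen F(\bx,\by)$ it suffices to prove the ``in particular'' statement, because once that is in hand, for any $c'<\SSeR$ the definition of the liminf supplies an $r>0$ on which the slope inequality holds with constant $c'$, and the claim then gives $\lopen F(\bx,\by)\ge c'/2$; letting $c'\uparrow\SSeR$ completes the argument. So fix $c,r>0$ as in the slope hypothesis, shrinking $r$ if necessary so that $y\mapsto\dist(\bx,F^{-1}(y))$ is upper semicontinuous on the closed (hence complete) ball $M:=\ball[\by,r]$. This upper semicontinuity guarantees that $\varphi$ is lower semicontinuous on $M$ -- at $\by$, where $\varphi(\by)=0\le\liminf$; at points of $F(\bx)\setminus\{\by\}$, where $\varphi=+\infty$ and the upper semicontinuity forces $\varphi(y_n)\to\infty$ along $y_n\to y$; and at the remaining points, where $\varphi$ is the product of a continuous positive function and a nonnegative lower semicontinuous one -- so Ekeland's variational principle applies on $M$.

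Assume for contradiction $\lopen F(\bx,\by)<c/2$, fix $\rho\le r/3$, and pick $y_0\in\ball(\by,\rho)\setminus F(\bx)$ with $\varphi(y_0)<c/2$. Then the open interval
$$I:=\left(\frac{\varphi(y_0)}{\varrho(y_0,\by)},\,\frac{c-\varphi(y_0)}{\varrho(y_0,\by)}\right)$$
is nonempty, and for any $\lambda\in I$ Ekeland produces $\bar v\in M$ with $\varphi(\bar v)\le\varphi(y_0)$, $\varrho(y_0,\bar v)\le\varphi(y_0)/\lambda$, and
$$\varphi(\bar v)<\varphi(z)+\lambda\varrho(\bar v,z)\quad\text{for every }z\in M\setminus\{\bar v\}.$$
The lower endpoint of $I$ yields $\varrho(y_0,\bar v)<\varrho(y_0,\by)$ and thus $\bar v\ne\by$; finiteness of $\varphi(\bar v)$ gives $\bar v\notin F(\bx)$; and the triangle inequality gives $\varrho(\bar v,\by)<2\rho\le 2r/3$. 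The slope hypothesis at $\bar v$ then supplies $w\ne\bar v$ with $\varrho(\bar v,\by)(\varphi(\bar v)-\varphi(w))>c\,\varrho(\bar v,w)$; together with $\varphi(\bar v)<c/2$ this forces $\varrho(\bar v,w)<\varrho(\bar v,\by)/2$, hence $\varrho(w,\by)<3\varrho(\bar v,\by)/2<r$, placing $w\in M$.

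Substituting $z=w$ in the Ekeland inequality and combining with the slope inequality at $\bar v$ yields $\lambda\varrho(\bar v,\by)>c$, whereas the upper endpoint of $I$ gives
$$\lambda\varrho(\bar v,\by)\le\lambda\varrho(y_0,\by)+\lambda\cdot\frac{\varphi(y_0)}{\lambda}<(c-\varphi(y_0))+\varphi(y_0)=c,$$
the desired contradiction. The main obstacle I expect is the geometric bookkeeping that keeps both the Ekeland minimizer $\bar v$ and the subsequent witness $w$ inside the ball $\ball[\by,r]$ where the slope hypothesis is usable; the factor $\frac{1}{2}$ in the theorem is intrinsic to this scheme, because the interval $I$ from which $\lambda$ must be drawn is nonempty precisely when $\varphi(y_0)<c/2$.
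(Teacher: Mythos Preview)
Your proof is correct and uses the same core machinery as the paper: both arguments apply the Ekeland variational principle to $\varphi$ on a complete ball near $\by$, and both prove the upper bound $\lopen F(\bx,\by)\le\SSeR$ by the identical observation $v=\by$.

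The packaging of the lower bound is somewhat different. The paper argues the contrapositive directly: given $\gamma>\lopen F(\bx,\by)$, it picks $y'$ with $\varphi(y')<\gamma$, applies Ekeland with slope parameter $\gamma/\varrho(y',\by)$, and produces a point $\hat y\notin F(\bx)$ near $\by$ at which the supremum in \eqref{AK-T1.9-1} is bounded by $2\gamma$, so that $\SSeR\le2\gamma$. A small but elegant trick there is that the Ekeland inequality, a priori valid only on the working ball $\ball[\by,3\delta]$, is extended to \emph{all} $v\in Y$ via the crude estimate $\varphi(\hat y)<\gamma\le\frac{\gamma}{\delta'}\varrho(\hat y,v)$ for $v$ outside the ball; this avoids any need to check that a competitor lies in the ball. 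Your route instead proves the ``in particular'' clause first by contradiction, using an interval of admissible Ekeland slopes and explicitly verifying the witness $w$ remains in $M$ through the chain of estimates with $\rho\le r/3$; the main inequality then follows by letting $c'\uparrow\SSeR$. Both arguments yield exactly the constant $\tfrac12$ for the same reason---the factor-of-two loss between $\varrho(\hat y,\by)$ (or $\varrho(\bar v,\by)$) and $\varrho(y',\by)$ (or $\varrho(y_0,\by)$) incurred by the triangle inequality after Ekeland. The paper's global extension trick is slightly slicker in that it eliminates your geometric bookkeeping, but what you do is perfectly rigorous.
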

\begin{proof}
Clearly, \begin{equation}\label{AK-T1.9P1}
 \lopen F(\bar{x},\bar{y}) =\liminf_{y\to\by, \ y\notin F(\bx)} \varphi(y).
\end{equation}
We prove the first inequality in \eqref{AK-T1.9-2}.
If $ \lopen F(\bar{x},\bar{y})=\infty$, the inequality holds trivially.
Let $ \lopen F(\bar{x},\bar{y})<\ga<\infty$.
We are going to show that $\SSeR\le2\ga$.
Note that $\varphi$ is \lsc\ near $\by$ and $\varphi(y)\ge0$ for all $y\in Y$.
Choose a number $\de>0$ such that $\varphi$ is \lsc\ on $\ball[\by,3\de]$.
By \eqref{AK-T1.9P1}, there exists a point $y'\in\ball[\by,\de]$ such that $y'\notin F(\bx)$ and $\varphi(y')<\ga$.
Set $\de':=\varrho(y',\by)$.
Then $0<\de'\le\de$.
Employing the Ekeland variational principle, we find a point $\hat y\in\ball(y',\de')$ such that $\varphi(\hat y)\le\varphi(y')$ and
\begin{equation}\label{AK-T1.9P2}
\varphi(\hat y)\le\varphi(v)+\frac{\ga}{\de'}\varrho(\hat y,v)
\end{equation}
for all $v\in\ball[\by,3\de]$.
Since $\varphi(\hat y)\le\varphi(y')<\infty$, in view of \eqref{AK-T1.9-0}, we have either $\hat y\notin F(\bx)$ or $\hat y=\by$.
At the same time,
\begin{equation*}
\varrho(\hat y,\by)\ge \varrho(y',\by)-\varrho(\hat y,y')>0.
\end{equation*}
Thus, $\hat y\ne\by$, and consequently, $\hat y\notin F(\bx)$.
Note that
\begin{equation*}
\varrho(\hat y,\by)\le \varrho(\hat y,y')+\varrho(y',\by) <2\de'.
\end{equation*}
If $v\notin\ball[\by,3\de]$, then
\begin{align*}
\varphi(\hat y)\le\varphi(y')<\ga \le\frac{\ga}{\de'}(3\de-2\de') <\frac{\ga}{\de'}(\varrho(v,\by)-\varrho(\hat y,\by))  \leq \frac{\ga}{\de'}\varrho(\hat y,v) \le\varphi(v)+\frac{\ga}{\de'}\varrho(\hat y,v).
\end{align*}
Hence, inequality \eqref{AK-T1.9P2} holds true for all $v\in Y$, and consequently,
\begin{equation*}
\varrho(\hat y,\by)\sup_{v\ne\hat y}\frac{\varphi(\hat y) -\varphi(v)}{\varrho(\hat y,v)} <2\de'\frac{\ga}{\de'}=2\ga.
\end{equation*}
Thus,
\begin{equation*}
\inf_{y\in\ball(\by,2\de)\setminus F(\bx)} \varrho(y,\by)\sup_{v\ne y} \frac{\varphi(y)-\varphi(v)}{\varrho(y,v)}<2\ga.
\end{equation*}
Passing to the limit as $\de\downarrow0$, we obtain $\SSeR\le2\ga$.
Since $\ga>  \lopen F(\bar{x},\bar{y})$ is arbitrary, the first inequality in \eqref{AK-T1.9-2} is proved.
Given any $y\ne\by$, we have
\begin{equation*}
\varrho(y,\by)\sup_{v\ne y} \frac{\varphi(y)-\varphi(v)}{\varrho(y,v)} \ge \varrho(y,\by) \frac{\varphi(y)-\varphi(\by)}{\varrho(y,\by)}=\varphi(y).
\end{equation*}
In view of the representations \eqref{AK-T1.9-1} and \eqref{AK-T1.9P1}, this proves the second inequality in \eqref{AK-T1.9-2}.
\end{proof}
\begin{remark} \rm
The second inequality in \eqref{AK-T1.9-2} is valid without the assumptions of the completeness of $Y$ and upper semicontinuity of the function $y \longmapsto\dist(\bx,F^{-1}(y))$. The last property holds, for example,  if $F^{-1}$ is \lsc, that is, when  $F$ is open at the corresponding reference point.
 \end{remark}

Let $X$ and $Y$ be normed spaces. Given a set $\Omega \subset X$ and a point $\bx \in \Omega$,  the \emph{Fr\'echet normal cone} to $\Omega$ at  $\bx$, denoted by  $\widehat{N}_{\Omega}(\bx)$,
is the set of all $x^* \in X^*$ such that for every $\varepsilon > 0$ there exists $\delta > 0$ such that
$$
  \langle x^*, x - \bx \rangle \leq \varepsilon \|x - \bx \| \quad \mbox{whenever} \quad x \in \Omega \cap \ball(\bx,\delta).
$$
For a mapping  $F : X \tto Y$  with $(\bx,\by) \in \gph F$,
the  \emph{Fr\'echet coderivative}  of $F$ at $(\bx,\by)$ acts from $  Y^* $ to the subsets of $X^*$ and is defined as
$$
Y^* \ni y^* \longmapsto  \widehat{D}^*F(\bx,\by)(y^*):= \left\{x^* \in  X^*: \ (x^*,-y^*) \in  \widehat{N}_{\gph F} (\bx,\by) \right\}.
$$

We have the following dual necessary condition for semiregularity
\cite[Theorem 6 (iv)]{AK2009}.
\begin{theorem} \label{thmDual}
Consider a mapping  $F:X \tto Y$ between normed spaces $X$ and $Y$ and a point $(\bx, \by) \in
\gph F$. Then
\begin{equation*}
  \lopen F(\bar{x},\bar{y}) \le\inf_{y^*\in\mathbb{S}_{Y^*}}\{\|x^*\|:\ x^*\in\widehat{D}^*F(\bx,\by)(y^*)\}.
\end{equation*}
Hence,
if $F$ is semiregular at $(\bx,\by)$ then
$$
  \widehat{D}^*F^{-1}(\by , \bx) (0) = \{0\}.
$$
\end{theorem}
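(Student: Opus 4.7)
\bigskip

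\noindent\textbf{Proof proposal.} The plan is to unwind the definition of the Fr\'echet coderivative and confront it with the defining inequality of linear openness. Let $c\in(0,\lopen F(\bx,\by))$ and pick any $y^*\in\mathbb{S}_{Y^*}$ and $x^*\in\widehat{D}^*F(\bx,\by)(y^*)$; I want to show $\|x^*\|\ge c$. Fix $\varepsilon>0$. By the definition of $\widehat{N}_{\gph F}(\bx,\by)$ applied to the pair $(x^*,-y^*)$, there exists $\delta>0$ such that
\begin{equation*}
\langle x^*,x-\bx\rangle-\langle y^*,y-\by\rangle \le \varepsilon\bigl(\|x-\bx\|+\|y-\by\|\bigr)
\end{equation*}
for every $(x,y)\in\gph F$ with $\|x-\bx\|+\|y-\by\|<\delta$. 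By linear openness, there exists $\varepsilon_0>0$ such that $\ball[\by,ct]\subset F(\ball[\bx,t])$ for all $t\in(0,\varepsilon_0)$.

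For each small $t>0$ and each $\eta\in(0,1)$, pick $v\in Y$ with $\|v\|=1$ and $\langle y^*,v\rangle\ge 1-\eta$, and set $y_t:=\by+ctv$, so $\|y_t-\by\|=ct$ and $\langle y^*,y_t-\by\rangle\ge(1-\eta)ct$. Choose $x_t\in\ball[\bx,t]$ with $y_t\in F(x_t)$. For $t$ small enough, $(x_t,y_t)$ lies in the $\delta$-neighborhood of $(\bx,\by)$, and the coderivative inequality yields
\begin{equation*}
(1-\eta)ct\le \langle y^*,y_t-\by\rangle\le \langle x^*,x_t-\bx\rangle+\varepsilon\bigl(\|x_t-\bx\|+\|y_t-\by\|\bigr)\le \bigl(\|x^*\|+\varepsilon(1+c)\bigr)t.
\end{equation*}
Dividing by $t$ and letting $\eta\downarrow 0$, then $\varepsilon\downarrow 0$, gives $\|x^*\|\ge c$. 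Since $c$ was arbitrary below $\lopen F(\bx,\by)$ and $(y^*,x^*)$ was arbitrary, the claimed inequality follows.

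For the second assertion, use the identification of $\gph F^{-1}$ with the coordinate swap of $\gph F$: a vector $v^*\in\widehat{D}^*F^{-1}(\by,\bx)(0)$ is equivalent to $(v^*,0)\in\widehat{N}_{\gph F^{-1}}(\by,\bx)$, i.e.\ $(0,v^*)\in\widehat{N}_{\gph F}(\bx,\by)$, which amounts to $0\in\widehat{D}^*F(\bx,\by)(-v^*)$. If $v^*\ne 0$, homogeneity of the Fr\'echet coderivative in the dual variable lets me normalize to $y^*:=-v^*/\|v^*\|\in\mathbb{S}_{Y^*}$ with $0\in\widehat{D}^*F(\bx,\by)(y^*)$; the first part then forces $\lopen F(\bx,\by)\le 0$, contradicting semiregularity by Proposition~\ref{prop01}. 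Hence $v^*=0$.

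The main obstacle is the careful choice of $(x_t,y_t)\in\gph F$: one must simultaneously keep $\|x_t-\bx\|\le t$ (supplied by linear openness), force $\|y_t-\by\|=ct$, and align $y_t-\by$ with $y^*$ up to a vanishing error—this is where the unit sphere of $Y$ is used through the almost-norming direction $v$. Everything else is a bookkeeping exercise on the $\varepsilon(1+c)$ remainder and the passage from $F$ to $F^{-1}$ via graph symmetry.
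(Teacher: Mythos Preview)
Your approach is the standard one (the paper itself does not prove this theorem but merely cites \cite[Theorem~6(iv)]{AK2009}), and the second assertion is handled correctly. However, there is a genuine sign error in the core chain of inequalities of the first part.

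From $(x^*,-y^*)\in\widehat N_{\gph F}(\bx,\by)$ you correctly obtain
\[
\langle x^*,x_t-\bx\rangle-\langle y^*,y_t-\by\rangle \le \varepsilon\big(\|x_t-\bx\|+\|y_t-\by\|\big),
\]
but this rearranges to $\langle x^*,x_t-\bx\rangle \le \langle y^*,y_t-\by\rangle+\varepsilon(\cdots)$, \emph{not} to your claimed $\langle y^*,y_t-\by\rangle \le \langle x^*,x_t-\bx\rangle+\varepsilon(\cdots)$. The Fr\'echet normal cone condition is one-sided, so the reverse inequality simply does not follow, and your chain $(1-\eta)ct\le\langle y^*,y_t-\by\rangle\le(\|x^*\|+\varepsilon(1+c))t$ collapses at the middle step.

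The repair is a one-line change: choose $v$ with $\|v\|=1$ and $\langle y^*,v\rangle\le-(1-\eta)$ (an almost-norming direction for $-y^*$ rather than $y^*$). Then $\langle y^*,y_t-\by\rangle\le-(1-\eta)ct$, and the correct rearrangement gives
\[
-\|x^*\|\,t \;\le\; \langle x^*,x_t-\bx\rangle \;\le\; \langle y^*,y_t-\by\rangle+\varepsilon(1+c)t \;\le\; -(1-\eta)ct+\varepsilon(1+c)t,
\]
hence $\|x^*\|\ge(1-\eta)c-\varepsilon(1+c)$; letting $\eta\downarrow0$ and $\varepsilon\downarrow0$ yields $\|x^*\|\ge c$ as desired. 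With this correction the proof is complete.
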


In finite dimensions, using Brouwer's fixed point theorem, we get:

\begin{theorem} \label{thmKaluza}
Consider a point $\bar x \in \R^n$ along with a mapping $f : \R^n \to \R^m$ which is both defined and continuous in a vicinity of $\bar x$. Suppose that there is a surjective linear  mapping $A: \R^n \to \R^m$ such that $\calm (f-A) (\bar x) < \sur A$. Then $n\ge m$ and
$$
 \lopen f(\bar x) \geq \sur A - \calm (f-A) (\bar x) > 0.
$$
\end{theorem}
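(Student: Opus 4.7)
The plan is to reduce the inclusion $y \in f(\ball[\bar x, t])$ to a fixed-point problem for a set-valued map on a closed ball in $\R^n$ and invoke the Kakutani fixed-point theorem. The surjectivity of $A \colon \R^n \to \R^m$ forces $n \ge m$ automatically. Without loss of generality I take $\bar x = 0$ and $f(\bar x) = 0$; write $c := \sur A$ and $\lambda := \calm(f-A)(\bar x)$. It suffices to prove $\lopen f(\bar x) \ge c'$ for every $c' \in (0, c - \lambda)$.

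I first choose auxiliary constants $\lambda'$ and $c''$ with $\lambda < \lambda' < c - c'$ and $c' + \lambda' < c'' < c$. The definition of the calmness modulus together with the continuity of $f$ near $0$ yields $\delta > 0$ such that $f$ is defined and continuous on $\ball[0,\delta]$ and $\|f(x) - Ax\| \le \lambda' \|x\|$ for every $x \in \ball[0,\delta]$. The linear surjective-operator theory recalled just before Theorem~\ref{Gr} gives $A(\ball[0,1]) \supset c''\ball_{\R^m}$, so every $z \in \R^m$ admits a preimage $w \in \R^n$ with $Aw = z$ and $\|w\| \le \|z\|/c''$.

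Next I fix $t \in (0,\delta)$ and $y$ with $\|y\| \le c' t$ and introduce the candidate fixed-point map
$$
\Psi(u) := \bigl\{\, x \in \ball[0,t] \,:\, Ax = y - f(u) + Au \,\bigr\}, \qquad u \in \ball[0,t].
$$
For each $u$, the bound $\|y - f(u) + Au\| \le c' t + \lambda' t < c'' t$, together with the previous paragraph, supplies $x$ with $Ax = y - f(u) + Au$ and $\|x\| < t$, so $\Psi(u)$ is nonempty. Every value of $\Psi$ is the intersection of an affine subspace with a closed ball, hence convex and compact, and the graph of $\Psi$ is closed by the continuity of $f$ and $A$. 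Kakutani's theorem therefore produces some $x \in \Psi(x)$, which is exactly the equation $f(x) = y$ with $\|x\| \le t$. Thus $\ball[0, c' t] \subset f(\ball[0,t])$ for every $t \in (0,\delta)$, which gives $\lopen f(\bar x) \ge c'$; letting $c' \uparrow c - \lambda$ finishes the argument.

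I expect the main obstacle to be conceptual rather than computational: because only a pointwise calmness estimate at $\bar x$ is available (not a Lipschitz bound in a neighborhood), the classical Graves--Lyusternik Cauchy-sequence iteration collapses, and one must exploit the finite-dimensional setting through a topological fixed-point theorem. The only technical point is the bookkeeping of the two buffers $\lambda < \lambda' < c - c'$ and $c' + \lambda' < c'' < c$, chosen precisely so that $\Psi$ maps $\ball[0,t]$ into itself.
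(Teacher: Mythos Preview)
Your proof is correct. Both your argument and the paper's reduce the problem to a fixed-point theorem on a closed ball in $\R^n$, and the decisive estimate $\|y - f(u) + Au\| \le (c'+\lambda')t < c''t$ is the same in spirit as the paper's. The difference lies in how the fixed-point map is built: the paper selects a \emph{single} preimage of $y - f(u) + Au$ via the Moore--Penrose right inverse $B = A^{T}(AA^{T})^{-1}$, verifies $\|B\| = 1/\sur A$ through a singular-value argument, and applies Brouwer's theorem to the resulting continuous self-map of the unit ball; you instead keep the \emph{full} affine preimage intersected with $\ball[0,t]$ and invoke Kakutani's theorem. Your route avoids the linear-algebra computation for $\|B\|$ at the cost of a heavier fixed-point theorem; the paper's explicit choice of $B$ pays off in the follow-up result on calm selections of $f^{-1}$, where the specific selection $\sigma(y)$ inherits differentiability with derivative $B$ --- something your set-valued $\Psi$ does not immediately deliver. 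It is worth noting that the paper itself switches to exactly a Kakutani-style argument in the set-valued analogue Theorem~\ref{thmKaluzeSetValued}.
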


\begin{proof} Clearly, if $n<m$, there is no chance to have a linear surjection from $\R^n$ onto $\R^m$. Therefore $n\ge m$.
Without any loss of generality assume that $\bar{x}=0$ and $f(\bar x)=0$. Let us identify a~linear mapping $A$ with its matrix representation in the canonical bases of $\R^n$ and $\R^m$. Then $A \in \R^{m \times n}$ has a full rank $m$. Hence the (symmetric) matrix $AA^{T} \in \R^{m\times m}$ is non-singular. Let $B:= A^{T}(AA^{T})^{-1} \in \R^{n \times m}$.    Note that $\sur A$ is equal to the smallest singular value of  $A$ and  $\|B\|$ is equal to the largest singular value of $B$. As
$$
 B^TB = \big(A^{T}(AA^{T})^{-1} \big)^T A^{T}(AA^{T})^{-1} = \big((AA^{T})^{-1} \big)^T = \big((AA^{T})^{T} \big)^{-1} = (AA^T)^{-1},
$$
the singular values of $A$ and $B$ are reciprocal. Therefore $\|B\| = 1/\sur A$.
Pick any $c \in (0, \sur A - \calm (f-A)(0))$. Let $\gamma > 0$ be such that $\calm (f-A)(0) + c + \gamma  < \sur A$.  By
the
assumptions, there is   $\varepsilon > 0$ such that  $f$ is continuous on $\ball(0,2\varepsilon)$ and
\begin{equation} \label{eqfuLub}
 \|f(x) - Ax \| \leq (\calm (f-A)(0) + \gamma) \,  \|x\| \quad \mbox{whenever} \quad x \in \ball(0,2\varepsilon).
\end{equation}
Fix any $t \in (0, \varepsilon)$. Pick an arbitrary $y \in \ball[0,c t]$. Define the mapping $h_y: \ball_{\R^n}\to \R^m$ by
\begin{equation} \label{defhy}
 h_y(u) := \frac 1 t B\left(A(tu) - f(tu) + y\right), \quad u\in\ball_{\R^n}.
\end{equation}
Note that, for every $u \in \ball[0,2]$, we have $tu \in \ball(0,2\varepsilon)$.
In particular,  $h_y$ is well defined and continuous on $\ball_{\R^n}$. Given $u \in \ball_{\R^n}$, inequality \eqref{eqfuLub} with $x:=tu$ implies that
\begin{eqnarray*}
\|h_y(u) \| & \leq &  \frac1 t  \|B\| \, \big \|(A(tu) - f(tu)) +y\big\|\\
&\leq & \frac{\|B\|}{t} \big((\calm (f-A)(0) + \gamma) \, \|tu\| + \|y\|\big)  \leq \frac{\|B\|}{t}  \big((\calm (f-A)(0) + \gamma) t + c t \big) \\
&= &  \|B\| (\calm (f-A)(0) + c + \gamma)   <  \|B\| \ \sur A = 1.
\end{eqnarray*}
Therefore $h_y$ maps $ \ball_{\R^n}$ into itself. Using Brouwer's fixed point theorem, we find $u_y \in \ball_{\R^n}$ such that $h_y(u_y)=u_y$. Hence $Ah_y(u_y) = Au_y$. As $AB = I_{\R^m}$, the definition of $h_y$ implies that
$$
 A(tu_y) - f(tu_y) + y = tA(u_y) = A(tu_y).
$$
Then  $x_y:=tu_y$ is such that  $f(x_y)=y$ and $\|x_y\| \leq t$. Hence $y \in f(\ball[0,t])$. Since $y \in \ball[0,c t]$ was chosen arbitrarily, we have $\ball[0,c t] \subset  f(\ball[0,t])$. Therefore $\lopen f(\bar x) \geq c$. Letting $c \uparrow (\sur A - \calm (f-A)(0))$, we finish the proof.
\end{proof}

The above statement is quite similar to Theorem~\ref{Gr} with one important difference. If, in addition to the assumptions of Theorem~\ref{thmKaluza}, the mapping $A$ is invertible, then $n=m$ and $\sur A = 1/\|A^{-1}\|$. Consequently,
$$
 \lopen f(\bar x) \geq 1/\|A^{-1}\| - \calm (f-A) (\bar x).
$$
However, Example~\ref{ex01} shows that one cannot conclude that $f$ is {\it strongly} semiregular at $\bar x$, that is, that the mapping $f^{-1}$ has a single-valued localization around $(\bar x, f (\bar x))$.  This example also shows that we can have $\sur f(\bx) = 0$ although all the assumptions of Theorem~\ref{thmKaluza} hold.

We immediately obtain that the surjectivity of the Fr\'echet derivative at the reference point implies the openness with a linear rate of the mapping in question  at this point. The following result improves \cite[Corollary 1G.6]{book} where a weaker property of openness is shown. This statement was motivated by a discussion of the second author with V. Kalu\v{z}a, who suggested a proof using Borsuk-Ulam theorem.

\begin{corollary} \label{corKaluza}
Consider a point $\bar x \in \R^n$ along with a mapping $f : \R^n \to \R^m$ which is both defined and continuous in a vicinity of $\bar x$ and Fr\'echet differentiable at $\bar x$.  If $f'(\bar x)$ is surjective, then $n\geq m$ and
$ \lopen f(\bar x) \geq \sur f'(\bar x) > 0$.
\end{corollary}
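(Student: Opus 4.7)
The plan is to deduce Corollary~\ref{corKaluza} directly from Theorem~\ref{thmKaluza} by taking $A:=f'(\bar x)$, so the work reduces to checking that the hypotheses of the theorem are satisfied for this particular choice of the linear approximation. The two facts I need are (a) $\sur A>0$ and (b) $\calm(f-A)(\bar x)=0$; once these are in hand, Theorem~\ref{thmKaluza} yields both the dimensional inequality $n\ge m$ and the bound $\lopen f(\bar x)\ge \sur A - \calm(f-A)(\bar x)=\sur A$, which is exactly the conclusion.

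For (a), I would invoke the already noted characterization of the modulus of surjection of a continuous linear operator between finite-dimensional spaces: $\sur A$ equals the smallest singular value of the matrix $A$. Surjectivity of $A:\R^n\to\R^m$ is equivalent to $\mathrm{rank}\,A=m$, which is in turn equivalent to $AA^T$ being nonsingular, and hence to all singular values of $A$ being strictly positive. Thus $\sur A>0$. (Alternatively, one can simply quote the equality $\sur A=\inf\{\|A^*y^*\|:y^*\in\mathbb{S}_{\R^m}\}$ from the linear discussion preceding Theorem~\ref{Gr} and note that surjectivity of $A$ forces injectivity of $A^*$, so this infimum, taken over a compact sphere, is positive.)

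For (b), I would use the very definition of Fr\'echet differentiability: for every $\gamma>0$ there exists $\delta>0$ such that
\[
 \|f(x)-f(\bar x)-A(x-\bar x)\|\le \gamma\,\|x-\bar x\|\quad\text{for all }x\in\ball(\bar x,\delta).
\]
Setting $g:=f-A$ and noting that $g(\bar x)=f(\bar x)-A\bar x$, this inequality is exactly
$\|g(x)-g(\bar x)\|\le \gamma\|x-\bar x\|$ on $\ball(\bar x,\delta)$. Therefore $\calm g(\bar x)\le\gamma$ for every $\gamma>0$, i.e.\ $\calm(f-A)(\bar x)=0$. In particular $\calm(f-A)(\bar x)<\sur A$, and the assumption of Theorem~\ref{thmKaluza} is met.

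Combining (a) and (b), Theorem~\ref{thmKaluza} delivers $n\ge m$ together with $\lopen f(\bar x)\ge \sur A=\sur f'(\bar x)>0$, completing the argument. There is no real obstacle here: the whole content of the corollary has been absorbed into Theorem~\ref{thmKaluza}, and the only thing to be careful about is the standard but sometimes glossed-over identification of $\calm(f-f'(\bar x))(\bar x)$ with $0$, which is an immediate reformulation of the Fr\'echet differentiability.
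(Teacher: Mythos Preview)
Your proposal is correct and is exactly the approach the paper intends: the corollary is stated as an immediate consequence of Theorem~\ref{thmKaluza} with $A:=f'(\bar x)$, and the key observation that $\calm(f-f'(\bar x))(\bar x)=0$ is precisely what the paper invokes (explicitly so in the proof of the subsequent theorem on selections).
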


We also obtain an extension of \cite[Theorem 1G.3]{book}.
\begin{theorem}
 Suppose  that the assumptions of Theorem~\ref{thmKaluza} hold and denote by $\varSigma$ the set of all selections for $f^{-1}$ defined in a vicinity of $\bar y:=f(\bar x)$.  Then
$$
 \inf_{\sigma \in \varSigma}  \calm \sigma  (\bar y) \leq \frac{1}{\sur A - \calm (f - A)(\bar x )}
$$
and
$$
   \inf_{\sigma \in \varSigma}  \calm (\sigma - A^{T}(AA^T)^{-1})(\bar y) \leq \frac{\calm (f - A)(\bar x )}{\sur  A \,(\sur A - \calm (f - A)(\bar x ))} \ .
 $$
 In particular, if $f$ is Fr\'echet differentiable at $\bar x$, then there is $\sigma \in \varSigma$ which is  Fr\'echet differentiable at $\bar y$ and
 $$
  \sigma'(\bar y) = [f'(\bar x)]^* (f'(\bar x) \, [f'(\bar x)]^*)^{-1}).
 $$
\end{theorem}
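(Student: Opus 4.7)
The plan is to reuse the Brouwer-based construction from the proof of Theorem~\ref{thmKaluza} to produce an explicit family of selections of $f^{-1}$ whose calmness moduli can be estimated directly. Without loss of generality assume $\bar{x}=0$ and $\bar{y}=f(\bar{x})=0$, and set $B:=A^T(AA^T)^{-1}$; as computed in the proof of Theorem~\ref{thmKaluza}, one has $AB=I_{\R^m}$ and $\|B\|=1/\sur A$. The key observation I plan to exploit is that the fixed-point identity forcing $\sigma(y)$ into the range of $B$ will automatically yield the second, sharper estimate for free.

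For the first bound, fix any $c\in(0,\sur A-\calm(f-A)(0))$ and any $\gamma>0$ with $\calm(f-A)(0)+c+\gamma<\sur A$. The proof of Theorem~\ref{thmKaluza} provides $\varepsilon>0$ such that, for every $y\in\ball[0,c\varepsilon]$ with $t:=\|y\|/c\in(0,\varepsilon)$, the self-map $h_y$ of $\ball_{\R^n}$ from \eqref{defhy} has, by Brouwer's theorem, a fixed point $u_y$; I will set $\sigma(y):=tu_y$ and $\sigma(0):=0$. Then $f(\sigma(y))=y$, so $\sigma$ is a selection of $f^{-1}$ defined near $0$, and $\|\sigma(y)\|\le t=\|y\|/c$, giving $\calm \sigma(0)\le 1/c$. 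Letting $c\uparrow\sur A-\calm(f-A)(0)$ (along a sequence of such selections) yields the first claimed bound on $\inf_{\sigma\in\varSigma}\calm\sigma(\bar y)$.

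For the second bound, multiplying $u_y=h_y(u_y)$ by $t$ and using $f(\sigma(y))=y$ collapses to $\sigma(y)=BA\sigma(y)$; subtracting $By$ and using $AB=I_{\R^m}$ gives
$$
\sigma(y)-By=B\bigl(A\sigma(y)-y\bigr)=-B\bigl(f(\sigma(y))-A\sigma(y)\bigr).
$$
The calmness estimate $\|f(x)-Ax\|\le(\calm(f-A)(0)+\gamma)\|x\|$ near $0$, together with $\|B\|=1/\sur A$ and $\|\sigma(y)\|\le\|y\|/c$, yields
$$
\|\sigma(y)-By\|\le\frac{\calm(f-A)(0)+\gamma}{\sur A\cdot c}\,\|y\|,
$$
hence $\calm(\sigma-B)(0)\le(\calm(f-A)(0)+\gamma)/(\sur A\cdot c)$. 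Letting $c\uparrow\sur A-\calm(f-A)(0)$ and $\gamma\downarrow 0$ (allowing the selection to depend on the pair $(c,\gamma)$) delivers the second bound.

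For the final assertion, take $A:=f'(\bar{x})$, so $A$ is surjective by hypothesis and $\calm(f-A)(\bar x)=0$, and Fr\'echet differentiability in fact gives $\|f(x)-f(\bar{x})-A(x-\bar{x})\|=o(\|x-\bar{x}\|)$. Fix any $c\in(0,\sur A)$ and any $\gamma>0$, and let $\sigma$ be the selection produced above. Since $\|\sigma(y)-\bar x\|\le\|y-\bar y\|/c$, the displayed identity for $\sigma(y)-By$ combined with the Fr\'echet estimate for $f$ yields $\|\sigma(y)-\bar x-B(y-\bar{y})\|\le \|B\|\cdot o(\|\sigma(y)-\bar x\|)=o(\|y-\bar{y}\|)$, which is precisely Fr\'echet differentiability of $\sigma$ at $\bar{y}$ with derivative $B=[f'(\bar{x})]^*(f'(\bar{x})[f'(\bar{x})]^*)^{-1}$. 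The main subtlety is the quantifier on $\sigma$: the infima in the two displayed inequalities are obtained by letting the parameters $c,\gamma$ vary through a family of Brouwer-selections, not from a single $\sigma$; the differentiable case is special because $\calm(f-A)(\bar x)=0$ removes one parameter and makes any such $\sigma$ automatically differentiable at $\bar{y}$.
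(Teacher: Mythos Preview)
Your proposal is correct and follows essentially the same route as the paper: reuse the Brouwer fixed-point construction from Theorem~\ref{thmKaluza} to manufacture explicit selections $\sigma$, then bound $\|\sigma(y)\|$ and $\|\sigma(y)-By\|$ directly from the fixed-point identity and the calmness of $f-A$. The only notable difference is in the first estimate: you use the crude ball bound $\|\sigma(y)\|\le t=\|y\|/c$ coming from $u_y\in\ball_{\R^n}$, whereas the paper substitutes the fixed-point identity $\sigma(y)=B(A\sigma(y)-f(\sigma(y))+y)$ back into itself and solves the resulting linear inequality to get the sharper pointwise bound $\|\sigma(y)\|\le \|y\|/(\sur A-\calm(f-A)(0)-\gamma)$, which depends only on $\gamma$ and not on $c$. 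Both routes give the same infimum, so this is a cosmetic difference; your intermediate identity $\sigma(y)=BA\sigma(y)$ is a pleasant observation the paper does not state explicitly, and your treatment of the Fr\'echet-differentiable case is the same as the paper's.
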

\begin{proof}
Let $B$, $c$, $\gamma$, $\varepsilon$, and  $t$ be as in the proof of Theorem~\ref{thmKaluza}. Consider the mapping
  $$
    V:= \ball[0,ct] \ni y \longmapsto \sigma(y):=x_y \in \ball[0,t]=:U,
  $$
  where $x_y$ is such that $h_y(x_y/t)=x_y/t$ with $h_y$ defined in \eqref{defhy}. We already know that $f(\sigma(y)) =y$ for each  $y \in V$.
	Moreover, given   $y \in V$, we have by \eqref{defhy} and \eqref{eqfuLub}
  \begin{eqnarray*}
     \|\sigma (y) \| &=& \|t h_y(\sigma(y)/t)\| = \|B\left(A(\sigma(y)) - f(\sigma(y)) + y\right)\| \\
    & \leq &  \|B\| \big( (\calm (f-A)(0) + \gamma)  \, \|\sigma (y)\| + \|y\|\big).
  \end{eqnarray*}
As $\|B\| = 1/ \sur A$ and  $\calm (f-A)(0) + \gamma < \sur A $, the above estimate implies that
\begin{equation} \label{eqsigma}
 \|\sigma (y) \| \leq \frac{1}{ \sur A - \calm (f-A)(0) - \gamma} \, \|y\| \quad \mbox{whenever} \quad  y \in V.
\end{equation}
Moreover, for a fixed $y \in V$, we have  by \eqref{defhy} and \eqref{eqfuLub}
 \begin{eqnarray*}
     \|\sigma (y) - By \| &=& \|t h_y(\sigma(y)/t) - By\| = \|B\left(A(\sigma(y)) - f(\sigma(y))\right)\| \\
     &\leq&  \|B\| (\calm (f-A)(0) + \gamma)  \,  \|\sigma (y)\|.
  \end{eqnarray*}
Using \eqref{eqsigma}, we get
\begin{equation} \label{eqsigmad}
 \|\sigma (y) - By\| \leq \frac{\calm (f-A)(0) + \gamma}{ \sur A \, (\sur A - \calm (f-A)(0) - \gamma)} \, \|y\| \quad \mbox{whenever} \quad  y \in V.
\end{equation}
As $\gamma > 0$ can be arbitrarily small, \eqref{eqsigma} and \eqref{eqsigmad}, respectively, imply the desired estimates.

To prove the second part, it suffices to observe that if  $f$ is Fr\'echet differentiable at $\bar x$ then $\calm (f - f'(\bar x))(\bar x )=0$.
\end{proof}

A similar approach as in the proof of Theorem~\ref{thmKaluza}, but applying Kakutani's fixed point theorem instead of Brouwer's theorem, yields a sufficient condition for openness with a linear rate of a set-valued mapping satisfying certain ``strong monotonicity/ellipticity" assumptions.

\begin{theorem} \label{thmKaluzeSetValued}
 Consider positive  constants $\ell$ and $r$, a point $(\bar x, \bar y) \in \R^n \times \R^n$,  and a mapping $F: \R^n \tto \R^n$ with $(\bar x, \bar y) \in \gph F$. Assume that $F$ has a closed graph and  convex values, the set $F(\ball[\bar x,r])$ is bounded, and that one of the following conditions holds:
\begin{itemize}
 \item[(C1)] for each $x \in \ball[\bar x,r]$ there is $y \in F(x)$ such that
$   \langle y - \bar y, x - \bar x \rangle \geq \ell \|x - \bar x \|^2$;
 \item[(C2)] for each $x \in \ball[\bar x,r]$ there is $y \in F(x)$ such that
$ \langle \bar y - y, x - \bar x \rangle \geq \ell \|x - \bar x \|^2$.
\end{itemize}
Then $  \lopen F(\bar x, \bar y) \geq \ell$; more precisely,
\begin{equation} \label{eqKaluzaSetValued}
 \ball[\bar y,\ell t] \subset F(\ball[\bar x,t]) \quad \mbox{for each}  \quad  t \in (0,r].
\end{equation}
\end{theorem}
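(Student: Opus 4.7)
The plan is to adapt the proof of Theorem~\ref{thmKaluza} by replacing Brouwer's fixed point theorem with Kakutani's. Fix $t \in (0,r]$, set $K := \ball[\bar x, t] \subset \ball[\bar x,r]$, and let $y \in \ball[\bar y, \ell t]$. I would first treat the strict case $\|y - \bar y\| < \ell t$; the equality case $\|y - \bar y\| = \ell t$ then follows by approximating $y$ from inside by a sequence $y_n$ with $\|y_n - \bar y\| < \ell t$, extracting a convergent subsequence from the resulting $x_n \in K$ (compactness of $K$), and invoking closedness of $\gph F$.

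The key construction is a trimmed mapping carving out only the ``elliptic'' part of $F$. Under (C1) define
\[
\widetilde F(x) := \{y' \in F(x) : \langle y' - \bar y, x - \bar x\rangle \geq \ell \|x - \bar x\|^2\},
\]
with the analogous definition (signs reversed) under (C2). Since $\widetilde F \subset F$, it suffices to solve $y \in \widetilde F(x)$. I would verify on $K$ that $\widetilde F$ (i) has nonempty values by (C1)/(C2); (ii) has convex values as the intersection of $F(x)$ with a closed half-space; (iii) has closed graph as the intersection of $\gph F$ with a closed set defined by a continuous inequality; (iv) is locally bounded, because $F(K) \subset F(\ball[\bar x, r])$ is bounded by hypothesis. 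Items (iii) and (iv) together give upper semicontinuity with compact values in $\R^n$.

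With a uniform bound $M$ on $\|y' - y\|$ for $y' \in F(K)$, I would next pick $L > 2\ell$ large enough that $L \geq M^2/\bigl(2\ell t(t - \|y - \bar y\|/\ell)\bigr)$ --- the right-hand side is finite precisely because of the strict inequality $\|y - \bar y\| < \ell t$ --- and define
\[
\Phi_y(x) := \Bigl\{x - \tfrac{1}{L}(y' - y) : y' \in \widetilde F(x)\Bigr\}
\]
under (C1) (opposite sign under (C2)). A fixed point of $\Phi_y$ instantly yields $y' = y$, so $y \in F(x)$. Upper semicontinuity and nonempty convex compact values of $\Phi_y$ transfer from $\widetilde F$ via the affine dependence on $y'$. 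For the invariance $\Phi_y(K) \subset K$, I would expand $\|x - \tfrac{1}{L}(y'-y) - \bar x\|^2$, use the ellipticity of $y'$ together with $\|y - \bar y\| \leq \ell t$ to bound the cross term, and observe that the resulting quadratic in $s := \|x - \bar x\|$ is non-decreasing on $[0,t]$ (thanks to $L > 2\ell$), so its maximum occurs at $s = t$; the second condition on $L$ is exactly what makes this maximum at most $t^2$.

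Kakutani's theorem then supplies a fixed point $x \in K$ with $y \in F(x)$, proving $\ball[\bar y, \ell t] \subset F(K)$ for every $t \in (0, r]$ and hence $\lopen F(\bar x, \bar y) \geq \ell$. The main obstacle is the invariance step: the ellipticity lower bound on $\langle y' - y, x - \bar x\rangle$ only dominates the $\tfrac{1}{L^2}\|y'-y\|^2$ error at the boundary $s = t$ when $\|y - \bar y\|$ is strictly below $\ell t$, which is precisely why the boundary values of $y$ must be handled by a separate closed-graph limit argument rather than directly inside the fixed-point machinery.
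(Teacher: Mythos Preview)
Your proposal is correct and follows essentially the same route as the paper: apply Kakutani's fixed point theorem to a map of the form $x\mapsto x-\alpha(y'-y)$ with $y'$ ranging over a suitable piece of $F(x)$, choose the step size $\alpha=1/L$ so that the ellipticity bound forces invariance of $\ball[\bar x,t]$, and handle the boundary $\|y-\bar y\|=\ell t$ by a closed-graph limit. The only cosmetic difference is that the paper keeps the full $F$ and \emph{intersects} the image with the ball (so nonemptiness is checked via a single witness from (C1), while convexity comes from $F$ having convex values and the ball being convex), whereas you first \emph{trim} $F$ to $\widetilde F$ and then show the whole image lies in the ball; your parameter condition $L\ge M^2/\bigl(2\ell t(t-\|y-\bar y\|/\ell)\bigr)$ matches the paper's $\alpha(m+cr)^2<2(\ell-c)t^2$ after the obvious identifications.
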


\begin{proof}
Note that  \eqref{eqKaluzaSetValued} for $F$ satisfying (C2) follows by considering the reference point $(\bar x,-\bar y)$ and the mapping $-F$, which necessarily satisfies (C1). Suppose that (C1) holds.  Assume without any loss of generality that $(\bar x, \bar y) = (0,0)$.    Find  $m > 0$ such that
 $F(\ball[0,r]) \subset \ball[0,m]$.

 First, we show that
\begin{equation} \label{eqKaluzaSetValuedb}
 \ball[0,ct] \subset F(\ball[0,t]) \quad \mbox{for each} \quad  c \in (0,\ell) \quad \mbox{and each} \quad  t \in (0,r].
\end{equation}
Let $c$ and $t$ be as in \eqref{eqKaluzaSetValuedb}. Fix an arbitrary (non-zero) $y \in \ball[0,c t]$. Pick $\alpha > 0$ such that
$$
 2\alpha \ell < 1 \quad \mbox{and} \quad  \alpha (m + cr)^2 < 2(\ell - c) t^2.
$$
 Define the mapping $H:
\ball[0,t]\tto\ball[0,t]
$, depending on the choice of $(y, c, t, \alpha)$,  by
$$
 H(u) := \big(u + \alpha (y - F(u)) \big) \cap \ball[0,t], \quad u\in\ball[0,t].
$$
Fix any $u \in \ball[0,t]$.   Using (C1), we find  a point $v \in F(u)$ such that $ \langle v, u  \rangle \geq  \ell \|u  \|^2  $. Let $z:= u + \alpha (y - v)$. Then
\begin{eqnarray*}
 \|z\|^2 & =  & \|u\|^2 + 2 \alpha \langle u, y - v \rangle  + \alpha^2 \|y - v \|^2 =  \|u\|^2 - 2 \alpha \langle v, u \rangle  + 2 \alpha \langle u, y \rangle +   \alpha^2 \|y - v \|^2 \\
 &\leq &  (1 - 2 \alpha \ell ) \|u\|^2   +  2\alpha \|u\| \|y\| + \alpha^2 (\|v\| + \|y\|)^2 \\
 &\leq & (1 - 2\alpha \ell) t^2 + 2 \alpha  t (c t) + \alpha^2 (m + cr)^2 <  \big(1 + 2\alpha (c - \ell)\big) t^2 + 2\alpha (\ell - c)t^2 = t^2.
\end{eqnarray*}
Hence $z \in H(u)$.  Consequently,  the domain of $H$ is equal to $\ball[0,t]$, which is a non-empty compact convex set. Since $F$ has closed graph and convex values, we conclude that $H$ has the same properties. Applying Kakutani's fixed point theorem, we find $u \in \ball[0,t]$ such that $u \in H(u)$. This implies that $y \in F(u) \subset F( \ball[0,t])$. As $y \in \ball[0,ct]$, and also $(c,t) \in (0,\ell) \times (0,r]$
are
arbitrary, \eqref{eqKaluzaSetValuedb} is proved.

To show \eqref{eqKaluzaSetValued}, fix any $t \in (0,r]$. Pick an arbitrary $y \in  \ball[0,\ell t]$. Let $y_k := (1-1/k)y$ for each $k \in \N$. Then $(y_k)$ converges to $y$.  For each $k \geq 2$, using \eqref{eqKaluzaSetValuedb} with $c:=(1-1/k)\ell$, we find $x_k \in \R^n$ such that $y_k \in F(x_k)$ and $\|x_k\| \leq t$. Passing to a subsequence, if necessary, we may assume that $(x_k)$ converges to, say, $ x \in \R^n$.
Then $\| x\| \leq t$ and $y \in F (x)$ because $\gph F$ is closed.  So  $F( \ball[0,t])$ contains $y$, which
is
an arbitrary point in $\ball[0,\ell t]$.
\end{proof}

The above statement implies \cite[Theorem 1 and Corollary 1]{BR} under slightly weaker assumptions and the above proof also shows that there is no need to extend the locally defined mapping under consideration on the whole space.

\begin{corollary} \label{corKaluzaSetValued1}
 Consider positive  constants $\ell$ and $r$, a point $\bar x \in \R^n$,  and a mapping $F: \R^n \tto \R^n$ with $\dom F=\ball[\bar x, r]$. Assume that $F$ is upper semicontinuous, has compact convex values, and
\begin{equation} \label{eqROSLw}
 \forall \ x \in \ball[\bar x,r] \ \forall \bar y \in F(\bar x) \ \exists y \in F(x) :
   \langle \bar y - y, x - \bar x \rangle \geq \ell \|x - \bar x \|^2.
\end{equation}
Then, for each $y \in \R^n$ such that $\dist (y, F(\bar x)) \leq r\ell$, there is $x\in \ball[\bar x,r]$ satisfying
$$
  y \in F(x) \quad \mbox{and} \quad \|x - \bar x\| \leq \frac{1}{\ell} \dist \big(y, F(\bar x)\big).
$$
\end{corollary}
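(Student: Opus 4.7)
The plan is to deduce Corollary~\ref{corKaluzaSetValued1} from Theorem~\ref{thmKaluzeSetValued}(C2) after choosing $\bar y\in F(\bx)$ to be the metric projection of the target point $y$ onto $F(\bx)$. So, fix $y\in\R^n$ with $d:=\dist(y,F(\bx))\le r\ell$. If $d=0$, then the closedness of $F(\bx)$ (which is compact) gives $y\in F(\bx)$, and $x:=\bx$ does the job. Assume $d>0$. Since $F(\bx)$ is a non-empty compact convex subset of $\R^n$, the nearest-point projection of $y$ onto $F(\bx)$ exists; pick $\bar y\in F(\bx)$ with $\|y-\bar y\|=d$.

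Next I would verify the hypotheses of Theorem~\ref{thmKaluzeSetValued} for this choice of $(\bx,\bar y)$. The values of $F$ are compact and convex by assumption. Because $F$ is upper semicontinuous with closed (compact) values into $\R^n$, its graph is closed. Moreover, the restriction of an upper semicontinuous mapping with compact values to the compact set $\ball[\bx,r]=\dom F$ sends compact sets to compact sets, so $F(\ball[\bx,r])$ is bounded. Condition (C2) at $(\bx,\bar y)$ is exactly what \eqref{eqROSLw} delivers for this particular $\bar y\in F(\bx)$.

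Now set $t:=d/\ell$. Since $0<d\le r\ell$, we have $t\in(0,r]$. Applying Theorem~\ref{thmKaluzeSetValued}, we obtain
\begin{equation*}
\ball[\bar y,\ell t]\subset F(\ball[\bx,t]),
\end{equation*}
that is, $\ball[\bar y,d]\subset F(\ball[\bx,d/\ell])$. Since $\|y-\bar y\|=d$, the point $y$ lies in the left-hand ball, so there is $x\in\ball[\bx,d/\ell]$ with $y\in F(x)$, which is precisely the desired conclusion $\|x-\bx\|\le\dist(y,F(\bx))/\ell$.

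The only non-routine points in this plan are the two verifications in paragraph two: (i) that upper semicontinuity with compact values implies closed graph in $\R^n$ (standard, via the sequential characterisation of USC), and (ii) that the image $F(\ball[\bx,r])$ is bounded (immediate once one notes it is contained in a compact set, being the USC image of a compact with compact values). Neither is a real obstacle, so the main content of the argument is just the clever choice of $\bar y$ as the projection, which converts the distance $\dist(y,F(\bx))$ into the radius $\ell t$ required by Theorem~\ref{thmKaluzeSetValued}.
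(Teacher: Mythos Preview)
Your proof is correct and follows essentially the same approach as the paper: choose $\bar y$ as a nearest point of $F(\bx)$ to $y$, verify the closed-graph and boundedness hypotheses of Theorem~\ref{thmKaluzeSetValued} via upper semicontinuity with compact values, and apply \eqref{eqKaluzaSetValued} with $t=\dist(y,F(\bx))/\ell$. The only cosmetic differences are that you treat the case $d=0$ explicitly and mention convexity of $F(\bx)$ when selecting $\bar y$ (compactness alone already suffices there).
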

\begin{proof} Since $F$ is upper semicontinuous and has compact values, using a standard compactness argument we conclude that the set $F(\ball[\bar x, r])$ is bounded. Moreover, $\gph F$ is closed since $F$ is upper semicontinuous with closed values,  closed domain, and
bounded range. Fix any $y \in \R^n$ with $ r\ell \geq \dist (y, F(\bar x))$ ($>0$). As  $F(\bar x)$ is a compact set, there is $\bar{y} \in \R^n$ such that $\|y - \bar{y}\| = \dist (y, F(\bar x))$. Now \eqref{eqROSLw} implies that (C2) is satisfied. By \eqref{eqKaluzaSetValued} with $t:= \|y - \bar{y}\|/\ell \leq r$,  there is $x \in \ball[\bar x, \|y - \bar{y}\|/\ell]=  \ball[\bar x, \dist (y, F(\bar x))/\ell] \subset \ball[\bar x,r]$ such that $y \in F(x).$
\end{proof}

We also get:
\begin{corollary} \label{corKaluzaSetValued2}
 Consider positive  constants $\ell$ and $r$, a point $\bar x \in \R^n$,  and a mapping $F: \R^n \tto \R^n$ with $\dom F=\ball[\bar x, 2r]$. Assume that $F$ is upper semicontinuous, has compact convex values, and
\begin{equation} \label{eqROSL}
 \forall  x, x' \in \ball[\bar x,2r] \ \forall  y \in F(x) \ \exists y' \in F(x') :
   \langle  y - y', x' - x \rangle \geq \ell \|x' - x \|^2.
\end{equation}
Then $  \sur F(\bar x, \bar y) \geq \ell$; more precisely,
\begin{equation} \label{eqKaluzaSetValuedaround}
 \ball[ y,\ell t] \subset F(\ball[x,t]) \quad \mbox{whenever}  \quad  (x,y) \in (\ball[\bar x, r] \times \ball[\bar y , r])\cap \gph F  \mbox{ and }  t \in (0,r].
\end{equation}
\end{corollary}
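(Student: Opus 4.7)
The plan is to reduce directly to Theorem~\ref{thmKaluzeSetValued} applied pointwise. Fix an arbitrary $(x,y)\in(\ball[\bar x,r]\times\ball[\bar y,r])\cap\gph F$ and take it as the reference pair to which Theorem~\ref{thmKaluzeSetValued} will be applied, with the same constants $\ell$ and $r$. By the triangle inequality, $\ball[x,r]\subset\ball[\bar x,2r]=\dom F$, so every inequality needed on $\ball[x,r]$ is furnished by the global hypothesis \eqref{eqROSL}.

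Next I would verify the hypotheses of Theorem~\ref{thmKaluzeSetValued} at this shifted reference pair. Convex values are assumed outright; closedness of $\gph F$ follows from upper semicontinuity combined with compactness of the values; and boundedness of $F(\ball[x,r])$ is the standard consequence of an upper semicontinuous map with compact values acting on the compact set $\ball[x,r]$. The main point is condition (C2) at $(x,y)$: specializing the first pair in \eqref{eqROSL} to $(x,y)$ yields, for every $x'\in\ball[x,r]\subset\ball[\bar x,2r]$, some $y'\in F(x')$ with $\langle y-y',x'-x\rangle\ge\ell\|x'-x\|^2$, which is precisely (C2) after relabeling $x'\mapsto x$ and $y'\mapsto y$.

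Having discharged the hypotheses, Theorem~\ref{thmKaluzeSetValued} immediately delivers $\ball[y,\ell t]\subset F(\ball[x,t])$ for every $t\in(0,r]$. Since $(x,y)$ was an arbitrary element of $(\ball[\bar x,r]\times\ball[\bar y,r])\cap\gph F$, this is exactly \eqref{eqKaluzaSetValuedaround}, and $\sur F(\bar x,\bar y)\ge\ell$ drops out of the definition of the surjection modulus with $U\times V:=\ball[\bar x,r]\times\ball[\bar y,r]$ and $\varepsilon:=r$. I do not anticipate any substantial obstacle; the whole corollary is a transparent reduction to an already-proved result, the only care being the alignment of signs and variable roles between \eqref{eqROSL} and (C2), which is why it is (C2) rather than (C1) that fires.
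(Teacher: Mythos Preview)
Your proof is correct and follows essentially the same approach as the paper: fix a graph point $(x,y)$ in $\ball[\bar x,r]\times\ball[\bar y,r]$, observe that $\ball[x,r]\subset\ball[\bar x,2r]$, derive condition (C2) at $(x,y)$ from \eqref{eqROSL}, verify the remaining hypotheses of Theorem~\ref{thmKaluzeSetValued} (closed graph and bounded image) from upper semicontinuity and compact values, and apply that theorem. The paper's proof is terser---it delegates the auxiliary verifications to the proof of Corollary~\ref{corKaluzaSetValued1}---but the substance is the same.
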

\begin{proof}
Fix any $(x,y)$ and $t$ as in \eqref{eqKaluzaSetValuedaround}. Then $\ball[x, r] \subset \ball[\bar x, 2r]$. Hence, \eqref{eqROSL} implies that for each $x' \in \ball[x,r]$ there is $y' \in F(x')$ such that $\langle  y - y', x' - x \rangle \geq \ell \|x' - x \|^2$, which is (C2) with $(\bar x, \bar y, x,y)$ replaced by $(x,y,x',y')$. As in the proof of Corollary~\ref{corKaluzaSetValued1}, we conclude that all the assumptions of Theorem~\ref{thmKaluzeSetValued} with $(\bar x,\bar y):=(x,y)$ are satisfied.
\end{proof}

\begin{remark} \label{rmSVKAL} \rm
Given $\ell > 0$, condition  \eqref{eqROSLw} holds, in particular, if $F$ is relaxed one-sided Lipschitz (ROSL) on $\ball[\bar{x},r]$ with the constant $-\ell$ in the sense of  \cite[Definition 1]{BR}, that is,
$$
 \forall  x, x' \in \ball[\bar x,r] \ \forall  y \in F(x) \ \exists y' \in F(x') :
   \langle  y - y', x - x' \rangle \leq  -\ell \|x - x' \|^2.
$$
Condition \eqref{eqROSL} means that  $F$ is ROSL on $\ball[\bar{x},2r]$ with the constant $-\ell$.  Up to minor changes in notation,   Corollary~\ref{corKaluzaSetValued2} seems to be the statement which the authors tried to formulate and prove in  \cite[Corollary 2 (ii)]{BR} under an additional assumption that $F$ is (Hausdorff) continuous.
However, their formulation seems  to be not
completely correct, since (local) metric regularity at $(\bar x, \bar y)$
presumes the reference point  \emph{to lie in} $  \gph F$. So the assumption in \cite[Corollary 2 (ii)]{BR} that $\dist(\bar y, F(\bar{x}))$ is small enough holds trivially. Also note that  ``a slightly generalized definition of metric regularity" in \cite{BR} is nothing else but the \emph{usual} definition of this property because $F: \R^n \tto \R^n$ in \cite{book}  means neither that $\dom F = \R^n$ nor that $\bar x$ is an interior point of $\dom F$.
\end{remark}

\begin{remark} \rm
  A sufficient condition for semiregularity of  a continuous (possibly nonsmooth) mapping $f:\R^n \to \R^n$ by using equi-invertibility of a pseudo-Jacobian can be found in  \cite[Theorem 3.2.1]{JLbook}.
\end{remark}

\section{General conditions and semiregularity of the sum} \label{secSum}

First, we present sufficient as well as necessary conditions for semiregularity of a~single-valued mapping.
\begin{proposition}\label{propSoft} Let $(X,d)$ be a complete metric space and $(Y,\varrho)$ be a metric space. Consider a~point $\bar{x} \in X$, a~continuous mapping $g:X\to Y$,  whose domain is all of $X$, and positive constants $c$ and $r$.
\begin{itemize}
\item[(i)] Assume that for every $x \in \ball(\bar{x},r)$ and every $y \in \ball(g(\bar{x}),cr)$  satisfying
	\begin{equation} \label{eqpremise2}
	0 < \varrho(g(x),y) \leq  \varrho(g(\bar{x}),y) - c \, d(x,\bar{x})
	\end{equation}
	there is a point  $x'\in X$ such that
	$$
	\varrho(g(x'),y) < \varrho(g(x),y)- c \, d(x,x').
	$$
	Then  $g\big(\ball(\bar{x},t)\big) \supset \ball(g(\bar{x}),ct)$ for every $t\in(0, r)$.
\item[(ii)] Assume  that  $ g\big(\ball(\bar{x},t)\big) \supset \ball(g(\bar{x}),ct)$  for every $t\in(0, r)$.
Then for every $c' \in (0,c)$,  every $x \in \ball(\bar{x},r)$, and every $y \in  \ball(g(\bar{x}),c'r)$  satisfying
\begin{equation} \label{eqpremise2c}
  0 < \varrho(g(\bar{x}),y) \leq  \varrho(g(x),y) - c' \, d(x,\bar{x})
\end{equation}
there is a point  $x'\in X$ such that
$$
 \varrho(g(x'),y) < \varrho(g(x),y)-c' \, d(x,x').
$$	
\end{itemize}
\end{proposition}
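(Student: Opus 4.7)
For part (i), I would apply the Ekeland variational principle to the continuous non-negative function $\varphi(x):=\varrho(g(x),y)$ on the complete space $X$. Fix $t\in(0,r)$ and $y\in\ball(g(\bar x),ct)$, so that $\varphi(\bar x)<ct$. The full form of Ekeland's principle with parameters $\epsilon:=\varphi(\bar x)$ and $\lambda:=\varphi(\bar x)/c$ delivers a point $\hat x\in X$ satisfying
\[
\varphi(\hat x)+c\,d(\hat x,\bar x)\le\varphi(\bar x),\qquad d(\hat x,\bar x)\le\varphi(\bar x)/c<t,
\]
and $\varphi(\hat x)<\varphi(x)+c\,d(x,\hat x)$ for every $x\ne\hat x$. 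The middle estimate places $\hat x\in\ball(\bar x,t)\subset\ball(\bar x,r)$, while the first rewrites as $\varrho(g(\hat x),y)\le\varrho(g(\bar x),y)-c\,d(\hat x,\bar x)$, which together with $y\in\ball(g(\bar x),cr)$ is exactly the premise \eqref{eqpremise2} at $x=\hat x$, \emph{provided} $g(\hat x)\ne y$. Under this provisional assumption, the hypothesis furnishes some $x'\in X$ with $\varrho(g(x'),y)<\varrho(g(\hat x),y)-c\,d(x',\hat x)$; such an $x'$ must differ from $\hat x$ and therefore contradicts the last Ekeland conclusion. Hence $g(\hat x)=y$ and $\hat x\in\ball(\bar x,t)$, proving the inclusion.

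For part (ii), I would proceed by a direct construction. Fix $c'\in(0,c)$ and take $x\in\ball(\bar x,r)$ and $y\in\ball(g(\bar x),c'r)$ fulfilling \eqref{eqpremise2c}; in particular $\varrho(g(\bar x),y)>0$ and $\varrho(g(\bar x),y)/c<r$. Choose $\delta>0$ so small that $s:=\varrho(g(\bar x),y)/c+\delta$ still lies in $(0,r)$ and $c'\delta<(1-c'/c)\,\varrho(g(\bar x),y)$; this is possible since $c'<c$. Because $\varrho(g(\bar x),y)<cs$, the openness assumption applied at $s$ produces an $x'\in\ball(\bar x,s)$ with $g(x')=y$. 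The triangle inequality combined with \eqref{eqpremise2c} yields
\[
c'\,d(x,x')<c'\,d(x,\bar x)+c's\le\varrho(g(x),y)-(1-c'/c)\,\varrho(g(\bar x),y)+c'\delta,
\]
and the choice of $\delta$ forces the right-hand side below $\varrho(g(x),y)$. Consequently $\varrho(g(x'),y)=0<\varrho(g(x),y)-c'\,d(x,x')$, as required.

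The main technical subtlety sits in part (i): the \emph{enhanced} Ekeland estimate $\varphi(\hat x)+c\,d(\hat x,\bar x)\le\varphi(\bar x)$, rather than the weaker inequality $\varphi(\hat x)\le\varphi(\bar x)$, is precisely what converts the near-optimality at $\bar x$ into the premise \eqref{eqpremise2} at $\hat x$ and drives the whole argument. In part (ii), the one delicate tuning is to absorb the triangle-inequality error $c'\delta$ into the strict slack $(1-c'/c)\,\varrho(g(\bar x),y)$; this loss of one order of openness rate from $c$ to any $c'<c$ appears to be unavoidable with a direct construction and explains why the rate on the left of \eqref{eqpremise2c} is strictly smaller than the one on the left of \eqref{eqpremise2}.
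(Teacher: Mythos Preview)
Your proof is correct and mirrors the paper's: Ekeland with the enhanced estimate $\varphi(\hat x)+c\,d(\hat x,\bar x)\le\varphi(\bar x)$ in (i), and a direct preimage construction in (ii). The paper's version of (ii) is slightly cleaner --- it takes the radius $t:=\varrho(g(\bar x),y)/c'$ rather than $\varrho(g(\bar x),y)/c+\delta$, so that $c't=\varrho(g(\bar x),y)$ exactly and the $\delta$-tuning disappears --- and in (i) the paper treats the trivial case $y=g(\bar x)$ separately, which you should do as well since your choice $\lambda=\varphi(\bar x)/c$ degenerates to zero there.
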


\begin{proof} (i)
	Fix any $t \in (0,r)$. Pick an arbitrary $y\in \ball(g(\bar{x}),ct)$. We shall find a~point $u \in  \ball(\bar{x},t)$ such that $g(u)=y$. If $y= g(\bar{x})$, then we set $u:= \bar{x}$. Assume that $y \neq g(\bar{x})$.  Define a~(continuous) function $f:X\to [0,\infty)$ by $f(x):=\varrho(g(x),y)$, $x \in X$.
	Then  $f(\bar{x})=\varrho(g(\bar{x}), y) < c t \  (< \infty)$.
	Employing the Ekeland variational principle, we find a point $u \in X$ such that
	\begin{equation} \label{eqek1_1}
	\varrho(g(u), y)  \le \varrho(g(\bar{x}), y) -c \, d(u,\bar x)
	\end{equation}
and
	\begin{equation} \label{eqek1}
	  \varrho(g(v),y)   \geq  \varrho(g(u),y) - c \,  d(v,u) \quad \mbox{for every} \quad v \in X.
	\end{equation}
	By \eqref{eqek1_1}, we have that  $c \, d (u,\bar{x}) \leq \varrho(g(\bar{x}), y) < ct$. Hence $u \in \ball(\bar{x},t)$. We claim  that  $g(u)=y$. Assume, on the contrary, that $g(u )\neq y$.
	As $t < r$,  we have $ u \in \ball(\bar{x},r)$ and $y \in  \ball(g(\bar{x}),cr)$. Then \eqref{eqek1_1} implies that \eqref{eqpremise2} with $x:=u$ holds. Find a point $x'\in X$ such that $	 \varrho(g(x'), y) < \varrho(g(u), y) - c \, d (u ,x')$.
	Setting	$v:=x'$ in  \eqref{eqek1}, we get that
	$\varrho(g(x'),y) \geq    \varrho(g(u ),y) - c \, d(u ,x')$,
	a~contradiction. Consequently $y = g(u)$ as claimed, and so $y\in g\big(\ball(\bar{x},t)\big)$. Since $y \in \ball(g(\bar{x}),c t)$ is arbitrary, the proof is finished.
	
	(ii)  Fix any $c' \in (0,c)$, any $x \in  \ball(\bar{x},r)$, and any $ y \in \ball(g(\bar{x}),c'r)$  satisfying \eqref{eqpremise2c}. Let
 $t := \varrho(g(\bar{x}),y)/c'$. The choice of $y$ implies that $0   <  t < (c'r)/c' = r$.
 As $y \in \ball[g(\bar{x}),c't] \subset \ball(g(\bar{x}),ct) $ there is $x' \in \ball(\bar{x},t)$ such that $g(x')=y$.
 Then
 $$
  c' \, d(x,x') \overset{(\bigtriangleup)}{<}   c' \, d(x,\bar{x}) + c' t \overset{\eqref{eqpremise2c}}{\leq}
  \varrho(g(x),y) -  \varrho(g(\bar{x}),y) + c't  =  \varrho(g(x),y)
   =  \varrho(g(x),y) - \varrho(g(x'),y).
$$
\end{proof}

Although the above statement concerns single-valued mappings, using the restriction of the canonical projection to the graph of a given set-valued mapping we immediately get its set-valued version. Moreover, it can be directly used to establish semiregularity of the sum of two set-valued mappings -  Theorem~\ref{thmCFK} below.

\begin{proposition} \label{propSoftSet} Let $(X,d)$ and  $(Y,\varrho)$ be metric spaces and a point $(\bar{x}, \bar{y}) \in X \times Y$ be given. Consider a set-valued mapping $F: X\rightrightarrows Y$, with $ \bar{y}  \in  F (\bar{x})$, for which there are positive constants  $c$, $r$, and $\alpha$ such that $\alpha c < 1$ and that the set $ \mbox{\rm gph} \, F \cap \big(\ball[\bar{x}, r] \times \ball[\bar{y}, r/\alpha]  \big)  $ is complete.
\begin{itemize}
\item[(i)]
   Assume that for every $x \in \ball(\bar{x}, r)$, every $v \in  \ball(\bar{y}, r/\alpha) \cap F(x)$, and every $y \in \ball(\bar{y}, cr)$  satisfying
	\begin{equation} \label{eqpremiseSETVALUED1}
\quad 0 < \varrho(v,y) \leq  \varrho(\by,y)-	c \, \max\{ d(x,\bar{x}), \alpha \varrho(v,\by)\}
	\end{equation}
	there is a pair $(x',v') \in \mbox{\rm gph} \, F$ such that
	\begin{eqnarray}\label{HvezdaS2}
	 \varrho(v',y)  < \varrho(v,y)- c \max\{d(x,x'), \alpha \varrho(v,v') \}.
	\end{eqnarray}
	Then  $F(\ball(\bx,t))\supset \ball(\by,c t)$ for every $t\in (0,r)$.
\item[(ii)] Assume that $F\big(\ball(\bar{x},t)\big) \supset \ball(\by,ct)$ for every $t\in(0, r)$.
	Then for every $c' \in (0,c)$,  every $x \in \ball(\bar{x}, r)$, every $v \in \ball(\bar{y}, r/\alpha) \cap F(x)$,  and every $y \in \ball(\bar{y}, c'r)$  satisfying
	\begin{equation} \label{eqpremise2cSet}
	0 < \varrho(\by,y) \leq  \varrho(v,y) - c' \, \max\lbrace d(x,\bar{x}),\alpha \varrho(v,\by)\rbrace
	\end{equation}
		there is a pair $(x',v') \in \mbox{\rm gph} \, F$ such that
	\begin{eqnarray*}
	\varrho(v',y)  < \varrho(v,y)- c' \max\{d(x,x'), \alpha \varrho(v,v') \}.
	\end{eqnarray*}
\end{itemize}
\end{proposition}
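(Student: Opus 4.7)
The plan is to use the projection trick already mentioned in the discussion preceding Theorem~\ref{crit2}, thereby reducing Proposition~\ref{propSoftSet} to Proposition~\ref{propSoft}. Set $M := \gph F \cap (\ball[\bx,r]\times\ball[\by,r/\alpha])$ and equip it with the metric $\tilde{d}((x,v),(x',v')) := \max\{d(x,x'),\alpha\varrho(v,v')\}$, which is indeed a metric since $d$ and $\varrho$ are. The standing hypothesis is precisely that $(M,\tilde{d})$ is complete. Let $\pi:M\to Y$ be the continuous restriction of the canonical projection, $\pi(x,v):=v$. For any $s\in(0,r]$, the open $\tilde{d}$-ball centred at $(\bx,\by)$ satisfies $\ball_{M}((\bx,\by),s)=\gph F\cap(\ball(\bx,s)\times\ball(\by,s/\alpha))$, and since $\alpha c<1$ one has $\ball(\by,cs)\subset\ball(\by,s/\alpha)$. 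Consequently, the relation $\pi(\ball_{M}((\bx,\by),t))\supset\ball(\by,ct)$ is equivalent to $F(\ball(\bx,t))\supset\ball(\by,ct)$.

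For part (ii) I would apply Proposition~\ref{propSoft}(ii) to $(M,\tilde{d})$, $\pi$, and the reference point $(\bx,\by)$. The assumed covering $F(\ball(\bx,t))\supset\ball(\by,ct)$ translates via the equivalence above to the covering hypothesis of Proposition~\ref{propSoft}(ii), and its conclusion then produces, for each admissible $c'$, $(x,v)$, and $y$, a pair $(x',v')\in M\subset\gph F$ satisfying $\varrho(v',y)<\varrho(v,y)-c'\tilde{d}((x,v),(x',v'))$, which is exactly the required inequality once $\tilde{d}$ is unwound.

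Part (i) is more delicate: a naive application of Proposition~\ref{propSoft}(i) on $(M,\tilde{d})$ would need the candidate $(x',v')$ supplied by the hypothesis to lie in $M$, whereas we only know $(x',v')\in\gph F$. My plan is therefore to imitate the proof of Proposition~\ref{propSoft}(i) directly on $(M,\tilde{d})$. Fix $t\in(0,r)$ and $y\in\ball(\by,ct)$, and apply Ekeland's variational principle to the continuous non-negative function $\tilde{f}(x,v):=\varrho(v,y)$ on $M$ with initial point $(\bx,\by)$, choosing $\varepsilon\in(\varrho(\by,y),ct)$ and $\lambda:=\varepsilon/c$, so that $\lambda<t$. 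This yields $z^*=(u_x,u_v)\in M$ with $\tilde{d}(z^*,(\bx,\by))\le\lambda<t$, the descent inequality $\varrho(u_v,y)+c\max\{d(u_x,\bx),\alpha\varrho(u_v,\by)\}\le\varrho(\by,y)$, and the Ekeland minimality $\varrho(v,y)+c\tilde{d}((x,v),z^*)>\varrho(u_v,y)$ for every $(x,v)\in M\setminus\{z^*\}$. If $u_v\ne y$, the descent inequality is exactly the premise of the hypothesis at $(u_x,u_v,y)$, and therefore produces a pair $(x',v')\in\gph F$ with $c\max\{d(u_x,x'),\alpha\varrho(u_v,v')\}<\varrho(u_v,y)$.

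The crux, which I expect to be the main obstacle, is to verify that this $(x',v')$ automatically belongs to $M$, so that Ekeland minimality may be evaluated at it. Combining the previous inequality with the descent inequality gives $d(x',\bx)\le d(x',u_x)+d(u_x,\bx)<\varrho(u_v,y)/c+d(u_x,\bx)\le\varrho(\by,y)/c<t<r$, and analogously $\alpha\varrho(v',\by)\le\alpha\varrho(u_v,v')+\alpha\varrho(u_v,\by)<\varrho(\by,y)/c<r$, so $(x',v')\in M$ as required. Evaluating the Ekeland minimality at $(x',v')$ then contradicts the strict inequality supplied by the hypothesis, forcing $u_v=y$. Together with $d(u_x,\bx)<t$, this yields $y\in F(\ball(\bx,t))$, completing the argument.
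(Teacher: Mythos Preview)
Your proof is correct and relies on the same projection trick as the paper. The one difference worth noting concerns part~(i). You correctly identify the crux --- showing that the pair $(x',v')$ supplied by the hypothesis lies in $M$ --- and you establish it via the same chain of triangle inequalities as the paper (combining \eqref{HvezdaS2} with the premise/descent inequality to get $\tilde d((x',v'),(\bx,\by))<\varrho(\by,y)/c<r$). However, the paper observes that this verification works for \emph{any} $(x,v)$ satisfying \eqref{eqpremiseSETVALUED1}, not just an Ekeland point; hence the hypothesis of Proposition~\ref{propSoft}(i) is fulfilled on $(M,\tilde d)$ with $g=\pi$, and that proposition may be invoked as a black box. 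You instead re-run the Ekeland argument of Proposition~\ref{propSoft}(i) inside the present proof, which is valid but redundant --- the very estimate you flag as ``the main obstacle'' is precisely what makes the direct application of Proposition~\ref{propSoft}(i) go through. For part~(ii) the situation is reversed: you reduce to Proposition~\ref{propSoft}(ii), while the paper repeats the short direct argument (choosing $x'$ with $y\in F(x')$ and $v':=y$); both routes are equally short and correct.
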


\begin{proof}  (i) Define the (compatible) metric $\tilde{d}$ on the space $X \times Y$ for  each $(u,w)$, $(u',w') \in X \times Y$ by $\tilde{d} \big((u,w),(u',w') \big):= \max\{d(u,u'), \alpha \varrho(w,w') \}$. Then  $\widetilde{X} := \big(\ball[\bar{x}, r] \times \ball[\bar{y}, r/\alpha]  \big) \cap \mbox{\rm gph} \, F  $, equipped with  $\tilde{d}$, is a complete metric space. Let $g:\widetilde{X} \to Y$ be defined by $g(x,y)=y$, $(x,y) \in  \widetilde{X}$. Then $g$ is a~continuous mapping defined on the whole $\widetilde{X}$.  Fix any $(x,v) \in \ball_{\widetilde{X}}((\bar{x}, \bar{y}),r) = \big(\ball(\bar{x}, {r}) \times \ball(\bar{y}, {r}/\alpha) \big) \cap \mbox{\rm gph} \, F  \subset  \widetilde{X} $ and any $y \in \ball(\bar{y}, c r )$ such that
	\eqref{eqpremiseSETVALUED1} holds.   Find a pair $(x',v') \in \mbox{\rm gph} \, F$   satisfying \eqref{HvezdaS2}. Then
	 \begin{eqnarray*}
		\tilde{d} \big( (x',v'), (\bar{x}, \bar{y})\big) & \leq & \tilde{d} \big( (x',v'), (x, v)\big) +  \tilde{d} \big( (\bx,\by), (x, v)\big)
		 \\ &\overset{\eqref{HvezdaS2},\eqref{eqpremiseSETVALUED1}}{<}& \frac{\varrho(v,y)- \varrho(v',y)}{c} + \dfrac{\varrho(\by,y)-\varrho(v,y)}{c}=\frac{\varrho(\by,y)- \varrho(v',y)}{c}
		<  \frac{cr}{c} =  r.
			\end{eqnarray*}
		Hence,  $(x',v') \in \widetilde{X}$.   Proposition~\ref{propSoft}, with $(X,d,\bx):=(\widetilde{X}, \tilde{d}, (\bx,\by))$,  implies that
		$$
		\ball(\by,ct) \subset g\Big( \big(\ball(\bx, t) \times \ball(\by, t/\alpha)\big) \cap   \gph \, F  \Big)
		\quad \text{for each} \quad t \in (0,  r).
		$$
		Fix an arbitrary $t \in (0,r)$. Given  $y \in \ball(\by,ct)$, there are $x \in \ball(\bx,t)$ and $y' \in  \ball(\by, t/\alpha ) \cap F(x) $ such that $g(x,y')=y$, hence $y'=y$ and consequently  $y \in F(x)$. Thus $\ball(\by,ct) \subset F\big(\ball(\bx,t)\big)$.
		
		(ii) Fix any $c' \in (0,c)$, then fix any $(x,v) \in \gph \, F \cap \big(\ball(\bx, r) \times \ball(\by, r/\alpha) \big)$ and any $y\in\ball(\by,c'r)$ satisfying \eqref{eqpremise2cSet}. Let
	$t := \varrho(\by,y)/c'$. The choice of $y$ implies that $0   <  t < (c'r)/c' = r$.
	As $y \in \ball[\by,c't] \subset \ball(\by,ct) $ there is $x' \in \ball(\bar{x},t)$ such that $F(x')\ni y$.
	Let $v':=y$. Then
	$$
	c' \, d(x,x') \overset{(\bigtriangleup)}{<}  c' \, d(x,\bar{x}) + c' t \overset{\eqref{eqpremise2cSet}}{\leq}
	\varrho(v,y) -  \varrho(\by,y) + c't  =  \varrho(v,y)
	=  \varrho(v,y) - \varrho(v',y)
	$$
	and
	$$
	c'\alpha \varrho(v,v')<\varrho(v,v')=\varrho(v,y)-\varrho(v',y).
	$$
\end{proof}

The next example shows that the assumptions of Proposition~\ref{propSoftSet}(i) do not imply that the mapping under consideration is regular around the reference point and can provide a tight lower estimate for
the corresponding modulus.
\begin{example} \label{exSur} \rm
	Let $(X,d):= (Y,\varrho) := (\mathbb{R},\vert \cdot\vert)$ and $(\bx,\by):=(0,0)$. Consider a set-valued mapping $\mathbb{R}\ni x \longmapsto F(x) := \lbrace x,0\rbrace\subset \mathbb{R}$. Then $F$ has a closed graph and $\sur F(0,0)=0$ while $\lopen F(0,0)=1$.   Fix any $c\in (0,1)$, any $\alpha\in (0,1/c)$, and any $r>0$. Pick any  $x \in  \ball(0,r)$, any $v \in  \ball(0,r/\alpha)\cap F(x)$, and  any $y\in \ball(0,cr)$ satisfying
	\begin{eqnarray}
	\label{exEstimate}
	0<\vert v-y\vert\leq \vert y\vert -c \max\lbrace\vert x\vert, \alpha \vert v\vert \rbrace.
	\end{eqnarray}
	Let  $(x',v'):=(y,y) \in \gph F$.  If $v\neq 0$ then  $v=x$ and consequently $x\neq y$ by \eqref{exEstimate}. Hence
	$
	c \, \vert x-x'\vert<\vert x-x'\vert =\vert v -y\vert =\vert v -y \vert-\vert v'-y\vert
$	and
$	\alpha c\vert v-v'\vert<\vert v-v'\vert=\vert v -y\vert =\vert v -y\vert-\vert v'-y\vert$.  If $v=0$ then \eqref{exEstimate} implies that  $y\neq 0$ and $x=0$. Thus
$
c \, \vert x - x'|= c\vert x'\vert<\vert x'\vert =\vert y \vert =\vert y\vert-\vert v'-y\vert
$
and
$
\alpha c\vert v- v'\vert = \alpha c\vert v'\vert<\vert v'\vert=\vert y\vert =\vert y\vert-\vert v'-y\vert
$. In both the cases, we showed that
$$
c\max\lbrace\vert x-x'\vert,\alpha\vert v-v' \vert\rbrace<\vert v-y \vert -\vert v'-y\vert.
$$
Proposition~\ref{propSoftSet} implies that $\lopen \,F(0,0) \geq c$ for any $c \in (0,1)$.
\end{example}

\begin{theorem} \label{thmCFK}
	Let $(X,d)$ be a complete metric space, $(Y, \varrho)$ be a complete linear  metric space with a~shift-invariant metric, and a point $(\bx,\by, \bz)\in X\times Y\times Y$ be given. Consider  set-valued mappings $F$, $G:X\rightrightarrows Y$, with $(\by,\bz) \in F(\bx)\times G(\bx)$,  for which there are positive constants $c'$, $r$, and $\ell < c'$   such that both the sets $\gph F\cap \big(\ball[\bx,2r]\times\ball[\by,2c'r]\big)$ and $\gph G\cap \big(\ball[\bx, 2r]\times\ball[\bz,2\ell r]\big)$ are closed; that
	\begin{equation}
	\ball(v,c' \tau) \subset F(\ball(x,\tau)) \quad \mbox{whenever} \qquad 	x \in \ball(\bx,r), \ v \in F(x)\cap \ball(\by,c' r), \ \mbox{and} \quad  \tau\in (0,r );  \label{LinOpennes1}
	\end{equation}
	and that
	\begin{eqnarray} \label{Aubin}
	   G(x)\cap\ball(\bz,\ell r )\subset G(x')+ \ell \, d(x,x')\ball_Y\quad\text{for each}\quad x,x'\in \ball(\bx,2r).
	\end{eqnarray}
	Then
	\begin{equation} \label{eqLinOpen}
       (F+G)\big(\ball(\bx,t)\big) \supset \ball(\by+\bz,(c-\ell)t)      \quad \text{whenever} \quad c \in (\ell, c') \quad
       \mbox{and} \quad t \in (0,r).  	
	\end{equation}
   \end{theorem}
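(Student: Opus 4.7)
The plan is to apply Proposition~\ref{propSoft}(i) to a single-valued lift of $F+G$ living on a fiber product of the two localized graphs. Set $\kappa:=c-\ell$ and $\bw:=\by+\bz$, and consider
$$
M:=\{(x,p,q)\in X\times Y\times Y\,:\,(x,p)\in A,\ (x,q)\in B\},
$$
where $A:=\gph F\cap(\ball[\bx,2r]\times\ball[\by,2c'r])$ and $B:=\gph G\cap(\ball[\bx,2r]\times\ball[\bz,2\ell r])$ are closed by hypothesis. Then $M$ is closed in the complete product $X\times Y\times Y$, hence complete under the weighted max-metric
$$
\tilde d((x,p,q),(x',p',q')):=\max\left\{d(x,x'),\ \frac{\varrho(p,p')}{c'},\ \frac{\varrho(q,q')}{\ell}\right\}.
$$
The mapping $\pi\colon M\to Y$, $\pi(x,p,q):=p+q$, is continuous and defined on all of $M$; moreover $\pi(\bx,\by,\bz)=\bw$ and $\tilde d((x,p,q),(\bx,\by,\bz))\ge d(x,\bx)$, so linear openness of $\pi$ at $((\bx,\by,\bz),\bw)$ with rate $\kappa$ will transfer directly to \eqref{eqLinOpen}: any $y=\pi(x,p,q)$ with $\tilde d((x,p,q),(\bx,\by,\bz))<t$ satisfies $y\in F(x)+G(x)$ and $d(x,\bx)<t$.

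Fixing $t\in(0,r)$ and $r'\in(t,r)$, I would invoke Proposition~\ref{propSoft}(i) on $(M,\tilde d)$ with $g:=\pi$ and constants $(\kappa,r')$. The task reduces to checking the premise: given $(x,p,q)\in M$ with $\tilde d((x,p,q),(\bx,\by,\bz))<r'$ and $y\in\ball(\bw,\kappa r')$ such that $0<\eta:=\varrho(\pi(x,p,q),y)\le\varrho(\bw,y)-\kappa\tilde d((x,p,q),(\bx,\by,\bz))$, one must exhibit $(x',p',q')\in M$ with $\varrho(\pi(x',p',q'),y)<\eta-\kappa\tilde d((x,p,q),(x',p',q'))$. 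I would build $(x',p',q')$ in two steps. First, $\varrho(p,\by)<c'r'<c'r$ and $d(x,\bx)<r'<r$ let me apply \eqref{LinOpennes1} at radius $\tau:=\eta/c'+\epsilon$ (which lies in $(0,r)$ since $\eta\le\varrho(\bw,y)<\kappa r'<c'r$), producing $x'\in\ball(x,\tau)$ with $p':=y-q\in F(x')$. Second, since $q\in G(x)\cap\ball(\bz,\ell r)$ and $x,x'\in\ball(\bx,2r)$, the Aubin property \eqref{Aubin} furnishes $q'\in G(x')$ with $\varrho(q,q')\le\ell\,d(x,x')$; checking $(x',p',q')\in M$ is routine from the triangle inequality and $r'<r$.

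The contraction estimate is the heart of the argument. Setting $\delta:=d(x,x')<\tau$, each of the three entries inside $\tilde d((x,p,q),(x',p',q'))$---namely $\delta$, $\varrho(p,p')/c'=\eta/c'$, and $\varrho(q,q')/\ell\le\delta$---is bounded by $\eta/c'+\epsilon$, while $\varrho(\pi(x',p',q'),y)=\varrho(q,q')\le\ell\delta<\ell(\eta/c'+\epsilon)$. Adding these,
$$
\varrho(\pi(x',p',q'),y)+\kappa\,\tilde d((x,p,q),(x',p',q'))<(\ell+\kappa)(\eta/c'+\epsilon)=\frac{c}{c'}\eta+c\epsilon,
$$
which is strictly less than $\eta$ for any $\epsilon\in(0,(c'-c)\eta/(cc'))$ since $c<c'$. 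This yields the required strict decrease, so Proposition~\ref{propSoft}(i) returns $\pi(\{(x,p,q)\in M\,:\,\tilde d((x,p,q),(\bx,\by,\bz))<t\})\supset\ball(\bw,\kappa t)$; projecting onto the first coordinate delivers \eqref{eqLinOpen} at the chosen $t$, and since $r'\in(t,r)$ was arbitrary the inclusion holds for every $t\in(0,r)$.

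The step I expect to demand the most care is the design of the weighted metric $\tilde d$: weighting the $p$-component by $1/c'$ and the $q$-component by $1/\ell$ is exactly what absorbs the linear-openness ratio $1/c'$ of $F$ and the Lipschitz ratio $\ell$ of $G$ into the single contraction factor $c/c'=(\kappa+\ell)/c'<1$. Any naive, unweighted choice of metric on $M$ would break this balance and force additional smallness assumptions relating $c$, $\ell$, and $c'$.
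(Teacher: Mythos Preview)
Your proof is correct and follows essentially the same route as the paper: you lift $F+G$ to the single-valued projection $\pi(x,p,q)=p+q$ on the fiber product $M$ equipped with the weighted max-metric with weights $1$, $1/c'$, $1/\ell$, and apply Proposition~\ref{propSoft}(i); the construction of $(x',p',q')$ via \eqref{LinOpennes1} and \eqref{Aubin} and the verification that $(x',p',q')\in M$ match the paper's argument. The only cosmetic differences are that the paper takes $\tau:=\eta/c$ (which already satisfies $\eta<c'\tau$ since $c<c'$, so no auxiliary $\epsilon$ is needed and $\tau<r$ is immediate), and it applies Proposition~\ref{propSoft}(i) directly with radius $r$ rather than introducing an intermediate $r'\in(t,r)$.
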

\begin{proof} Fix any $c \in (\ell, c')$.
    Define the (compatible) metric on $X\times Y\times Y$ by
	\begin{eqnarray*}
		\tilde{d}((x,v,z),(x',v',z')):=\max\lbrace d(x,x'), \varrho(v,v')/c',\varrho(z,z')/\ell \rbrace,\quad  (x,v,z),(x',v',z')\in X\times Y\times Y.
	\end{eqnarray*}
	  Let
	\begin{eqnarray*}
	\widetilde{X}:=\lbrace (x,v,z)\in X\times Y\times Y: \ x\in \ball[\bx,2r], v\in F(x)\cap \ball[\by,2c'r], z\in G(x)\cap\ball[\bz,2\ell r]\rbrace.
	 \end{eqnarray*}
	Then $\widetilde{X}$ is a (nonempty) closed subset of $X \times Y \times Y$, hence $(\widetilde{X},\tilde{d})$ is a complete metric space.
	Let  $g: \widetilde{X}\to Y$ be defined by
	\begin{eqnarray*}
		g(x,v,z):=v+z,\quad  (x,v,z)\in\widetilde{X}.
	\end{eqnarray*}
	Then $g$ is a~continuous mapping defined on the whole $\widetilde{X}$.  Fix any $(x,v,z)\in \ball_{\widetilde{X}}\big((\bx,\by,\bz),r\big) \subset \ball(\bx,r)\times \ball(\by,c'r) \times \ball(\bz,\ell r)$ and any $y\in \ball(\by+\bz,(c-\ell) r)$ such that
	\begin{eqnarray*}
		0<\varrho(g(x,v,z),y)\leq \varrho(\by+\bz,y)-(c-\ell)\tilde{d}((x,v,z),(\bx,\by,\bz)).
	\end{eqnarray*}
	Let $ \tau:=\varrho(g(x,v,z),y)/c$. Then $0 < \tau \leq \varrho(\by+\bz,y)/c < (c-\ell) r/c < r$.
	As 	
    $$
	\varrho(y-z,v)=\varrho(y-z,g(x,v,z)-z)=\varrho(y,g(x,v,z))=c\tau<c'\tau,
	$$
	we have $v':=y-z\in \ball(v,c'\tau)$.	
	By \eqref{LinOpennes1}, there is $x'\in \ball(x,\tau)$ such that $v'\in F(x')$. Then
	\begin{equation}
	\label{xEstimate}
	d(x',\bx) \leq d(x',x)+d(x,\bx)< \tau+r<  2r.
	\end{equation}
	Since $z\in G(x)\cap \ball(\bz,\ell r)$, using \eqref{Aubin}, we find $z'\in G(x')$ such that
	$ \varrho(z,z')\leq \ell d(x,x')< \ell \tau$.
    Then
    $$
      \varrho(z', \bar{z}) \leq \varrho(z',z) + \varrho(z, \bar{z}) < \ell \tau + \ell r < 2 \ell r
	\quad \mbox{ and } \quad
	 \varrho(v',\by) \leq 	\varrho(v',v)+\varrho(v,\by) < c\tau + c'r< 2c'r,
	$$
	hence, remembering \eqref{xEstimate}, we conclude that  $(x',v',z')\in\widetilde{X}$.  	
	Moreover,
	\begin{eqnarray*}
		\varrho(g(x',v',z'),y)&=&\varrho(y-z+z',y)=\varrho(z',z)< \ell \tau=c\tau-(c-\ell )\tau= \varrho(g(x,v,z),y)-(c-\ell)\tau.
	\end{eqnarray*}
	 Since $d(x',x) < \tau$, $\varrho(v',v) < c' \tau$, and  $\varrho(z',z)< \ell \tau$, we get that
	\begin{eqnarray*}
		\varrho(g(x',v',z'),y)<\varrho(g(x,v,z),y)-(c-\ell)\tilde{d}((x,v,z),(x',v',z')).
	\end{eqnarray*}
	Proposition \ref{propSoft}, with $(\widetilde{X},\tilde{d}, (\bx,\by,\bz), c-\ell)$ instead of  $(X,d,\bx, c)$, implies that
	$$
	 	g\big(\ball_{\widetilde{X}}((\bx,\by,\bz),t)\big) \supset  \ball(g(\bx,\by,\bz),(c-\ell)t) = \ball(\by+\bz,(c-\ell)t)         \quad \mbox{for each} \quad  t\in (0,r).
     $$
	Consequently, given  $t\in(0,r)$ and $y\in\ball(\by+\bz,(c-\ell)t)$, there are $x\in\ball(\bx,t)$, $v\in  F(x) \cap \ball(\by, c' t) $, and $z\in G(x)\cap \ball(\bz,  \ell t)$ such that $ y =  v + z$, that is, $y \in F(x)+G(x) 	\subset (F+G)(\ball(\bx,t))$.
\end{proof}

Using the above statement we immediately get the following result:
\begin{theorem} \label{thmCFKloc}
	Let $(X,d)$ be a complete metric space, $(Y, \varrho)$ be a complete linear  metric space with a~shift-invariant metric, and a point $(\bx,\by,\bz)\in X\times Y\times Y$ be given. Consider  set-valued mappings $F$, $G:X\rightrightarrows Y$ such that $F$ has a locally closed graph around $(\bx,\by) \in \gph F$ and $G$ has a locally closed graph around  $(\bx,\bz) \in \gph G$.
	Then
	\begin{equation} \label{Estimated}
	\lopen (F+G)(\bx,\by+\bz)\geq \sur F(\bx,\by)- \lip G(\bx,\bz).
	\end{equation}
\end{theorem}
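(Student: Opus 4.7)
The plan is to derive (\ref{Estimated}) by selecting constants slightly below the involved moduli, shrinking the working neighborhood so that all the quantitative hypotheses of Theorem~\ref{thmCFK} become available, and then letting the constants tend to the moduli. If $\sur F(\bx,\by)\le\lip G(\bx,\bz)$ there is nothing to prove, so assume the strict inequality and fix arbitrary numbers $\lip G(\bx,\bz)<\ell'<\ell<c'<\sur F(\bx,\by)$.

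From the definitions of the surjection and Lipschitz moduli there exist a number $\varepsilon_0>0$ together with neighborhoods $U_F\times V_F$ of $(\bx,\by)$ and $U_G\times V_G$ of $(\bx,\bz)$ such that (i) $\ball[v,c'\tau]\subset F(\ball[x,\tau])$ for all $(x,v)\in(U_F\times V_F)\cap\gph F$ and $\tau\in(0,\varepsilon_0)$, and (ii) $\dist(z,G(a))\le\ell'd(a,b)$ for all $a,b\in U_G$ and $z\in G(b)\cap V_G$. Shrinking the neighborhoods if needed, the local closedness assumption guarantees that $\gph F\cap(U_F\times V_F)$ and $\gph G\cap(U_G\times V_G)$ are closed. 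Now pick $r>0$ small enough that $r<\varepsilon_0$, $\ball[\bx,2r]\times\ball[\by,2c'r]\subset U_F\times V_F$, and $\ball[\bx,2r]\times\ball[\bz,2\ell r]\subset U_G\times V_G$. Then the two graph intersections appearing in Theorem~\ref{thmCFK} are closed, and the linear openness condition (\ref{LinOpennes1}) is realized with constant $c'$.

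The only delicate step is to derive the Aubin-type inclusion (\ref{Aubin}) with constant $\ell$ starting from the pseudo-Lipschitz inequality (ii), which features the \emph{strictly smaller} constant $\ell'$. Given $x,x'\in\ball(\bx,2r)$ and $z\in G(x)\cap\ball(\bz,\ell r)\subset G(x)\cap V_G$, apply (ii) with $a:=x'$, $b:=x$ to get $\dist(z,G(x'))\le\ell'd(x,x')$. If $x=x'$, set $z':=z$; otherwise, since $(\ell-\ell')d(x,x')>0$, choose $z'\in G(x')$ with $\varrho(z,z')\le\ell'd(x,x')+(\ell-\ell')d(x,x')=\ell d(x,x')$. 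The positive margin $\ell-\ell'$ is exactly what converts an infimum bound into an existence statement, and this is the single non-mechanical point of the argument.

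All hypotheses of Theorem~\ref{thmCFK} being verified, that theorem gives $(F+G)(\ball(\bx,t))\supset\ball(\by+\bz,(c-\ell)t)$ for every $c\in(\ell,c')$ and $t\in(0,r)$. By the definition of $\lopen$ in (\ref{suro}), this forces $\lopen(F+G)(\bx,\by+\bz)\ge c-\ell$ for every such $c$, whence $\lopen(F+G)(\bx,\by+\bz)\ge c'-\ell$. Letting $c'\uparrow\sur F(\bx,\by)$ and $\ell\downarrow\lip G(\bx,\bz)$ yields (\ref{Estimated}).
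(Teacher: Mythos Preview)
Your proof is correct and follows essentially the same route as the paper: reduce to Theorem~\ref{thmCFK} by choosing constants strictly between the moduli, shrink $r$ so that all quantitative hypotheses hold, and then pass to the limit. The paper's version is much terser (``Using the definitions, we find a small enough $r>0$ such that all the assumptions of Theorem~\ref{thmCFK} are satisfied''), and you have faithfully expanded that step; in particular, your introduction of the auxiliary constant $\ell'<\ell$ to turn the distance estimate $\dist(z,G(x'))\le\ell'd(x,x')$ into an actual selection $z'\in G(x')$ with $\varrho(z,z')\le\ell d(x,x')$ is exactly the detail the paper suppresses.
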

\begin{proof}
	If $\sur F(\bx,\by)\leq \lip G(\bx,\bz)$ we are done. Suppose that $\sur F(\bx,\by)> \lip G(\bx,\bz)$. Fix any $c, c',$ and  $\ell$ such that $\lip G(\bx,\bz)<\ell <c<c'<\sur F(\bx,\by)$. Using the definitions, we find a~small enough $r > 0$ such that all the assumptions of Theorem~\ref{thmCFK} are satisfied. By \eqref{eqLinOpen}, we get $\lopen\big(F+G\big)(\bx,\by+\bz)\geq c-\ell$. Letting $\ell\downarrow \lip G(\bx,\bz)$ and $c\uparrow \sur  F(\bx,\by)$, we get \eqref{Estimated}.
\end{proof}

\begin{remark} \label{rmkCFK} \rm

\item[1.] It is well known, that one cannot replace  $\lopen (F+G)(\bx,\by+\bz)$ by $\sur (F+G)(\bx,\by+\bz)$ in \eqref{Estimated}, as the following elementary example shows (see also \cite[Example 5I.1]{book} for a more elaborate one). Let $(X,d):= (Y,\varrho) := (\mathbb{R},\vert \cdot\vert)$ and $(\bx,\by,\bz):=(0,0,0)$. Consider set-valued mappings $\mathbb{R}\ni x \longmapsto F(x) := \lbrace x,-1 \rbrace\subset \mathbb{R}$ and $\mathbb{R}\ni x \longmapsto G(x) := \lbrace 0,1 \rbrace\subset \mathbb{R}$. Then $F$ and $G$ have closed graphs,  $\sur F(0,0)=1$, and $\lip G(0,0)=0$. Then
$(F+G)(x)=\{x,-1,x + 1 ,0\}$ for each $x \in \mathbb{R}$. Consequently, $\lopen (F+G)(0,0) = 1$ while $\sur (F+G)(0,0) = 0$.
\item[2.] 	Suppose that the assumptions of Theorem~\ref{thmCFK} hold and let $(\tilde{x}, \tilde{y}, \tilde{z})  \in \ball(\bx,r/2) \times  \ball(\by,c'r/2) \times  \ball(\bz,\ell r/2)$ with $(\tilde{y}, \tilde{z}) \in F(\tilde{x})\times  G(\tilde{x})$ be arbitrary. Defining $\widetilde{X}$, $\tilde{d}$, and $g$ as in the proof of Theorem~\ref{thmCFK} and replacing $(\bx,\by,\bz,r)$  by  $(\tilde{x},
\tilde{y},\tilde{z}, r/2)$  in the rest of the proof, we get that
	\begin{equation} \label{eqLinOpenb}
      (F+G)(\ball(\tilde{x},t)) \supset \ball(\tilde{y}+\tilde{z},(c-\ell)t)      \quad \mbox{whenever} \quad c \in (\ell, c') \quad        \mbox{and} \quad t \in (0,r/2).  	
	\end{equation}
 Employing this technique, we get short proofs of the results in \cite{HNT2014}.
 Note that \eqref{eqLinOpenb} does not mean that $\sur (F+G)(\bx,\by+\bz)\geq c - \ell$ since, given $(\tilde{x},\widetilde{w}) \in \gph (F+G)$ close to $(\bx, \by + \bz)$, there is no guarantee that $\widetilde{w} = \tilde y + \tilde z$ for some pair $(\tilde{y}, \tilde{z})$ with the properties required above unless the so-called sum stability holds, cf. \cite{HNT2014}.

\end{remark}

To conclude this section, we present a closely related result which was published in \cite{ACN}.

\begin{theorem}  \label{thmSumr}
Let $(X, \|\cdot\|)$ and $(Y,\|\cdot\|)$ be Banach spaces and a point $(\bar x,  \bar y, \bar z) \in X \times Y \times Y$ be given. Consider set-valued mappings $F$, $G:X\rightrightarrows Y$, with    $(\bar{y},\bar{z})\in F(\bar x) \times G(\bar{x})$, for which there are positive constants  $a$, $b$, $\kappa$, and $\ell$ such that
$\kappa \ell < 1$; that both the sets $\gph F \cap \big(\ball[\bar x,a]\times \ball[\bar y,2a]\big)$ and
$\gph G \cap \big(\ball[\bar x, a] \times \ball[\bar z, 2a]\big)$ are closed; that
\begin{equation}
\label{defMRSUMr}
\dis \big(x, F^{-1}(y) \big) \leq \kappa \dis\big(y, F(x)\big)
\quad
\mbox{for each}
\quad  (x,y) \in \ball(\bar x,a)\times \ball(\bar y,a);
\end{equation}
and that
\begin{equation} \label{eqExcessr}
G(x)\cap\ball(\bar{z}, a) \subset G(x') + \ell \| x - x' \| \ball_Y \quad
\mbox{for each}
\quad x, x' \in \ball(\bar{x}, a).
\end{equation}
Then, for
any
$\beta >0$ such that  $2\beta  \max\{1, \kappa\} < a(1-\kappa \ell)$,
we have
\begin{equation}\label{ineqr}
 \dist\big(\bar x, ({F}+G)^{-1}(y) \big)\le \frac\kappa{1-\kappa \ell} \dist\big(y, {F}(\bar x)+\bar z\big)
\quad\mbox{for
each}
\quad y\in \ball(\bar y+\bar z,\beta).
\end{equation}
\end{theorem}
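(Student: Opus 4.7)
The plan is to construct a Cauchy sequence $(x_k)\subset X$ starting from $x_0:=\bar x$ that converges to a~point $x^*\in(F+G)^{-1}(y)$ satisfying the required quantitative bound. Along with $(x_k)$, I would build an auxiliary sequence $(z_k)$ with $z_k\in G(x_k)$ by setting $z_0:=\bar z$ and alternating two steps. Given $(x_k,z_k)$, I first use the metric regularity~\eqref{defMRSUMr} of $F$ applied at the point $(x_k,y-z_k)$ to produce $x_{k+1}$ with $y-z_k\in F(x_{k+1})$ and
\begin{equation*}
  \|x_{k+1}-x_k\|\le \kappa'\dist\bigl(y-z_k,F(x_k)\bigr),
\end{equation*}
where $\kappa'$ is an arbitrary constant slightly larger than $\kappa$ with $\kappa'\ell<1$; then I use the pseudo-Lipschitz estimate~\eqref{eqExcessr} applied at $(x_k,x_{k+1})$ with $z_k\in G(x_k)\cap\ball(\bar z,a)$ to pick $z_{k+1}\in G(x_{k+1})$ with $\|z_{k+1}-z_k\|\le\ell\|x_{k+1}-x_k\|$.

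The driving mechanism of the convergence is that, by construction, $y-z_k\in F(x_{k+1})$, so
\begin{equation*}
 \dist\bigl(y-z_{k+1},F(x_{k+1})\bigr)\le\|z_{k+1}-z_k\|\le\ell\|x_{k+1}-x_k\|,
\end{equation*}
which gives the geometric bound $\|x_{k+2}-x_{k+1}\|\le\kappa'\ell\|x_{k+1}-x_k\|$. Summing the telescope yields $\sum_{k\ge 0}\|x_{k+1}-x_k\|\le\|x_1-x_0\|/(1-\kappa'\ell)$, and the first term is controlled by $\|x_1-\bar x\|\le\kappa'\dist(y-\bar z,F(\bar x))=\kappa'\dist(y,F(\bar x)+\bar z)$. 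Hence $(x_k)$ is Cauchy with some limit $x^*$ satisfying $\|x^*-\bar x\|\le\kappa'\dist(y,F(\bar x)+\bar z)/(1-\kappa'\ell)$; likewise $(z_k)$ is Cauchy with some limit $z^*$. Using the local closedness of $\gph F$ and $\gph G$ and passing to the limit in $z_k\in G(x_k)$ and $y-z_k\in F(x_{k+1})$ gives $z^*\in G(x^*)$ and $y-z^*\in F(x^*)$, so $y\in(F+G)(x^*)$; letting $\kappa'\downarrow\kappa$ delivers~\eqref{ineqr}.

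The main obstacle will be verifying that all iterates lie inside the neighborhoods on which the hypotheses are valid, that is, $x_k\in\ball(\bar x,a)$, $z_k\in\ball(\bar z,a)$ and $y-z_k\in\ball(\bar y,a)$, and that the relevant pairs stay in the closed portions of the graphs where the limit can be taken. The standing assumption $2\beta\max\{1,\kappa\}<a(1-\kappa\ell)$ is tailored exactly for this bookkeeping: for $\kappa'$ close enough to $\kappa$ it still yields $\kappa'\beta/(1-\kappa'\ell)<a$, and since $\dist(y,F(\bar x)+\bar z)\le\|y-\bar y-\bar z\|<\beta$, this gives $\|x_k-\bar x\|<a$ uniformly in $k$; the estimates $\|z_k-\bar z\|\le\ell\sum_{j<k}\|x_{j+1}-x_j\|<\kappa'\ell\beta/(1-\kappa'\ell)$ and $\|y-z_k-\bar y\|\le\|y-\bar y-\bar z\|+\|z_k-\bar z\|$ settle the remaining inclusions, and the factor~$2$ in the hypothesis provides slack both for the enlargement $\kappa'>\kappa$ and for staying inside the larger closed-graph neighborhoods $\ball[\bar y,2a]$ and $\ball[\bar z,2a]$ required at the limiting step.
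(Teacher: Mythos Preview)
The paper does not supply its own proof of this theorem; it is quoted from \cite{ACN} and stated without argument. Your proposal is precisely the classical Lyusternik--Graves iteration by which such perturbation estimates are established (and is, in substance, how the result is proved in \cite{ACN}): alternate between the metric regularity \eqref{defMRSUMr} of $F$ to advance $x_k$ and the Aubin property \eqref{eqExcessr} of $G$ to advance $z_k$, obtain the contraction factor $\kappa'\ell<1$, and pass to the limit using the local closedness of the two graphs.

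Your neighborhood bookkeeping is correct. From $2\beta\max\{1,\kappa\}<a(1-\kappa\ell)$ one gets, for $\kappa'$ close enough to $\kappa$, both $\kappa'\beta/(1-\kappa'\ell)<a$ and $\beta/(1-\kappa'\ell)<a$; the first controls $\|x_k-\bar x\|$, while the second bounds $\|y-z_k-\bar y\|\le\|y-\bar y-\bar z\|+\|z_k-\bar z\|<\beta+\kappa'\ell\beta/(1-\kappa'\ell)=\beta/(1-\kappa'\ell)$ and a fortiori $\|z_k-\bar z\|$. One small point worth making explicit in a full write-up: when $\dist(y-z_k,F(x_k))=0$ you cannot literally select $x_{k+1}$ with $\|x_{k+1}-x_k\|\le\kappa'\cdot 0$; but for $k\ge1$ this forces $z_k=z_{k-1}$, hence $y-z_k\in F(x_k)$ already, and for $k=0$ the local closedness of $\gph F$ yields $y-\bar z\in F(\bar x)$, so in either case one may take $x_{k+1}:=x_k$.
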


Note that the property in \eqref{ineqr} is  stronger than semiregularity in general. For the Newton-type methods (cf. Section~\ref{secNewton}), the semiregularity is enough and seems to play the key role in the analysis.

\section{Convergence of the Newton-type methods} \label{secNewton}

In this section, we  study inexact iterative methods of Newton type for solving  the generalized equation \eqref{GE}.  We focus on a local convergence analysis of  \eqref{niN} around a reference solution.

\begin{theorem} \label{mrr2}  Let $(X, \|\cdot\|)$ and $(Y, \|\cdot\|)$ be Banach spaces. Consider  a point $\bx \in X$ along with a continuous mapping $f:X \to Y$ and a set-valued mapping $F:X \tto Y$ with closed graph  such that $f(\bx) + F(\bx) \ni 0$.
Suppose  that there is $\mathcal{H}:X \tto {\cal L}(X, Y)$ which is upper semicontinuous at $\bx \in\inT \dom \mathcal{H}$ with
$\chi(\mathcal{H}(\bar{x})) < \infty$, and such that,  for each $A \in \mathcal{H}(\bx)$, the mapping $G_A:X \tto Y$ defined by
  \begin{equation} \label{G_A}
     G_A(x) := f(\bx) + A(x-\bx) + F(x), \quad x \in X,
  \end{equation} is regular around $(\bx,0)$, and
 \begin{equation} \label{eqH2}
   \lim_{x \to \bx, \ x \neq \bx} \frac{\sup_{A\in \mathcal{H}(x)} \| f(x)-f(\bx) - A(x-\bx)\| }{\|x - \bx\|} = 0.
 \end{equation}
Let  $(R_k)$ be a sequence of mappings $R_k: X \times X \tto Y$, $k \in \N_0$, with closed graphs such that $(\bx,\bx) \in \inT \left(\bigcap_{k \in \N_0} \dom R_k \right)$ and  $0 \in R_k(\bx,\bx)$ for each $k \in \N_0$, and assume that there are positive constants $a$, $\gamma$, and $\ell$ satisfying
  \begin{equation} \label{11}
      \chi (\mathcal{H}(\bx)) + \ell + \gamma <  \inf\limits_{A \in \mathcal{H}(\bar{x})} \mbox{ \rm sur} \, G_A (\bx,0)
  \end{equation}
 such that
  \begin{equation} \label{eqgamma}
  \limsup_{x \to \bx, \ x \neq \bx} \frac{\sup_{k \in \N_0} \dist\big(0,R_k(x,\bx)\big)}{\|x - \bx\|} < \gamma,
 \end{equation}
and that, for all  $x$, $u$, $u' \in \ball(\bx,a)$ and all $k \in \N_0$, we have
 \begin{equation} \label{eqRk}
   R_k(x,u)\cap \ball(0,a) \subset R_k(x,u') + \ell \|u-u'\| \ball_Y.
 \end{equation}
Then  there exist $t \in (0,1)$  and $r>0$  such that,  for any starting point $x_0 \in \ball(\bx,r) $, there
exists a sequence $(x_k)$ in $\ball(\bx,r)$  generated by \eqref{niN} such that
\begin{equation} \label{t0}
 \| x_{k+1} - \bx \| \leq t  \| x_k - \bx \| \quad \mbox{for each} \quad k\in \N_0,
\end{equation}
that is, $(x_k)$ converges q-linearly to $\bx$.
\end{theorem}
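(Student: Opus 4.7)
The approach is to reformulate the inclusion \eqref{niN} as $-c_k \in \Psi_k(x_{k+1})$, where $\Psi_k(x) := G_{A_k}(x) - R_k(x_k, x)$ and $c_k := f(x_k) - f(\bx) - A_k(x_k - \bx)$. The identity $f(x_k) + A_k(\cdot - x_k) + F(\cdot) = G_{A_k}(\cdot) + c_k$ delivers this equivalence immediately. The plan has three steps: (i) show that $\Psi_k$ is semiregular at a reference point $(\bx, -w_k)$ uniformly in $k$, where $w_k \in R_k(x_k, \bx)$ is chosen small via \eqref{eqgamma}; (ii) use this semiregularity to find $x_{k+1}$ with $-c_k \in \Psi_k(x_{k+1})$ and $\|x_{k+1} - \bx\|$ controlled by $\|c_k\| + \|w_k\|$; (iii) combine \eqref{eqH2} and \eqref{eqgamma} to dominate that by $t \|x_k - \bx\|$ for some $t \in (0,1)$.

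For (i), I first use $\chi(\mathcal{H}(\bx)) < \infty$ to pick, for any small $\delta > 0$, a finite set $\mathcal{F} \subset \mathcal{H}(\bx)$ with $\mathcal{H}(\bx) \subset \mathcal{F} + (\chi(\mathcal{H}(\bx)) + \delta)\ball_{\mathcal{L}(X,Y)}$. Upper semicontinuity of $\mathcal{H}$ at $\bx$ then gives, for all $x$ in some small ball and all $A \in \mathcal{H}(x)$, some $A^i \in \mathcal{F}$ with $\|A - A^i\| < \chi(\mathcal{H}(\bx)) + 2\delta$. Theorem~\ref{thmPerturb}(i), applied to $G_{A^i}$ with the linear perturbation $x \longmapsto (A - A^i)(x - \bx)$, produces $\sur G_A(\bx, 0) \ge \sur G_{A^i}(\bx, 0) - \|A - A^i\|$, which by \eqref{11} exceeds $\ell + \gamma + 2\eta$ for some $\eta > 0$ once $\delta$ is small. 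The quantitative sum Theorem~\ref{thmCFK}, with $F := G_{A_k}$ regular and $G := -R_k(x_k, \cdot)$ pseudo-Lipschitz with modulus $\le \ell$ by \eqref{eqRk}, then yields uniform constants $c > \gamma + \eta$ and $r_1 > 0$ such that $\Psi_k\big(\ball(\bx, \tau)\big) \supset \ball(-w_k, c\tau)$ for every $\tau \in (0, r_1)$ whenever $x_k \in \ball(\bx, r_1)$ and $w_k \in R_k(x_k, \bx)$ is small enough.

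For (ii)--(iii), I treat $x_k = \bx$ trivially by setting $x_{k+1} := \bx$ (then $c_k = 0$ and $0 \in R_k(\bx, \bx)$ gives \eqref{niN}). For $x_k \ne \bx$ in a small enough ball $\ball(\bx, r) \subset \ball(\bx, r_1)$, \eqref{eqgamma} supplies $w_k \in R_k(x_k, \bx)$ with $\|w_k\| < \gamma' \|x_k - \bx\|$ for some fixed $\gamma' < \gamma$, and \eqref{eqH2} gives $\|c_k\| < \varepsilon \|x_k - \bx\|$ with $\varepsilon$ so small that $t := (\gamma' + \varepsilon)/c < 1$. Setting $\tau := \|w_k - c_k\|/c < r_1$, the inclusion from (i) locates $x_{k+1} \in \ball(\bx, \tau)$ with $-c_k \in \Psi_k(x_{k+1})$, i.e., satisfying \eqref{niN}, and $\|x_{k+1} - \bx\| \le \tau \le t \|x_k - \bx\|$. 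Since $t < 1$, induction keeps every $x_k$ in $\ball(\bx, r)$ and yields \eqref{t0}.

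The main obstacle is the uniformity in (i): the semiregularity modulus of $\Psi_k$ must be controlled simultaneously across the index $k$ (because $A_k$ ranges over a possibly large set $\mathcal{H}(x_k)$), the reference image $-w_k$ (which drifts with $x_k$), and across a whole ball of test images (since a genuine semiregularity inclusion, not merely a point statement, is needed). Condition \eqref{11} is precisely calibrated to fund the finite-cover argument via $\chi(\mathcal{H}(\bx))$; Theorem~\ref{thmPerturb}(i) promotes finite-cover uniformity on $\mathcal{F}$ to pointwise uniformity over $\mathcal{H}(x)$; and the quantitative sum Theorem~\ref{thmCFK}, with explicit radii rather than the infinitesimal form Theorem~\ref{thmCFKloc}, is what keeps the neighborhoods independent of $k$.
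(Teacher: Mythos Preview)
Your approach is essentially the same as the paper's: both arguments (i) establish uniform regularity of $G_A$ for all $A\in\mathcal{H}(\ball(\bx,\delta))$ via the finite cover $\mathcal{F}\subset\mathcal{H}(\bx)$ furnished by $\chi(\mathcal{H}(\bx))$ together with a Lyusternik--Graves perturbation, and then (ii) combine $G_{A_k}$ with $-R_k(x_k,\cdot)$ through a sum theorem to locate $x_{k+1}$. The only substantive difference is the choice of sum result: the paper invokes Theorem~\ref{thmSumr} (the distance estimate $\dist(\bx,(G_A-R_k(x,\cdot))^{-1}(y))\le \frac{\theta}{1-\theta\ell}\|y-\bar z\|$), whereas you invoke Theorem~\ref{thmCFK} (the inclusion $\Psi_k(\ball(\bx,\tau))\supset\ball(-w_k,c\tau)$). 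Either works here and leads to the same contraction estimate.

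One point in your Step~(i) needs tightening. Theorem~\ref{thmPerturb}(i) as stated yields only the modulus inequality $\sur G_A(\bx,0)\ge \sur G_{A^i}(\bx,0)-\|A-A^i\|$; it does not by itself give a \emph{common} radius on which the openness condition \eqref{LinOpennes1} of Theorem~\ref{thmCFK} holds for every $A\in\mathcal{H}(\ball(\bx,\delta))$. The paper fills this in its Step~1 by appealing to the quantitative perturbation theorem \cite[Theorem~5G.3]{book}: for each $A^i\in\mathcal{F}$ it produces an explicit $\beta(A^i)>0$, independent of the particular perturbation $A-A^i\in\mu\ball_{\mathcal{L}(X,Y)}$, on which $G_{A^i+A}$ is regular with constant $\theta$; since $\mathcal{F}$ is finite, $b:=\min_{A^i\in\mathcal{F}}\{\delta,\beta(A^i)/3,\theta\beta(A^i)/3\}$ serves as the common radius. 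Your last paragraph shows you are aware that uniformity is the crux, but the quantitative fix belongs already at the perturbation step for $G_A$, not only at the sum step.
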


\begin{proof} Shrink $a$, if necessary, to guarantee that
$$
\ball(\bar{x}, a)  \, \subset  \mbox{\rm dom} \, \mathcal{H}  \quad \mbox{and} \quad  \ball(\bar{x}, a) \times \ball(\bar{x}, a) \subset \dom R_k \quad \mbox{for all} \quad k\in \N_0.
$$
Let $c:= \chi (\mathcal{H}(\bx))$ and $m := \sup_{A \in \mathcal{H}(\bar{x})} \mbox{ \rm reg} \, G_A (\bx,0)$. By \eqref{11}, there are  $\mu > c$,  $\kappa > m$,  $\varepsilon >0$,
and $t \in (0,1)$ satisfying
\begin{equation} \label{eqmukappa}
 (\mu + \ell + \gamma + \varepsilon) \kappa < 1, \quad c +2 \varepsilon < \mu  \quad \mbox{and} \quad   {\kappa( \gamma   + \varepsilon)} < t(1-(\mu+\ell)\kappa).
\end{equation}

\medskip

{\sc Step 1.} {\it There exist $b\in (0,a)$ and $\theta \in (0, \kappa/(1-\mu\kappa))$ such that,
 for every $A \in \mathcal{H}(\ball(\bar{x},b))$  and
 for every  $(x,y)\in \ball(\bx,b) \times \ball(0,b)$, we have
 $$
    \dist\big(x, G_A^{-1}(y)\big) \leq \theta \dist\big(y, G_A(x)\big).
$$}
As $\mathcal{H}$ is upper semicontinuous at $\bx$, there is $\delta  \in (0,a)$ such that
\begin{equation} \label{ha}
 \mathcal{H} (x) \subset  \mathcal{H} (\bx) + \varepsilon \ball_{\mathcal{L}(X,Y)}
 \quad \mbox{for each} \ \  x \in \ball(\bx,\delta).
\end{equation}
 From the definition of measure of noncompactness, we find a finite subset $\mathcal{A}$ of $\mathcal{H}(\bar{x})$ such that
$$
 \mathcal{H} (\bx) \subset \mathcal{A} + \big(c + \varepsilon \big) \ball_{\mathcal{L}(X,Y)}.
$$
 Therefore, given  $x \in \ball(\bx, \delta)$, we have
$$
  \mathcal{H} (x) \overset{\eqref{ha}}{\subset}   \mathcal{A} + \big(c + \varepsilon \big) \ball_{\mathcal{L}(X,Y)} +  \varepsilon  \ball_{\mathcal{L}(X,Y)} = \mathcal{A} + (c + 2\varepsilon) \ball_{\mathcal{L}(X,Y)}.$$
The second inequality in \eqref{eqmukappa} implies that
\begin{equation} \label{Hy}
  \mathcal{H} (x)     \subset \mathcal{A}  +  \mu\ball_{\mathcal{L}(X,Y)}   \quad \mbox{for every} \quad   x \in \ball(\bx,\delta).
\end{equation}
Choose  $\theta$ to satisfy
$$m/{(1-\mu m)} < \theta < \kappa/(1-\mu\kappa),$$
and then choose
 $\tau \in (m, \kappa)$ with $\tau/(1-\mu\tau)< \theta$. Pick any $\bar{A} \in \mathcal{A}$ and
$A \in \mu\ball_{\mathcal{L}(X,Y)}$.  There exists ${\alpha} > 0$  such that
$$
 \dist\big(x, G_{\bar{A}}^{-1}(y)\big) \leq \tau \dist\big(y, G_{\bar{A}}(x)\big) \quad \mbox{for all} \quad (x,y) \in \ball(\bx, {\alpha}) \times \ball(0, {\alpha}).
$$
The mapping $ G_{\bar{A}}$ has
closed graph, because so does $F$.  Let $g(x):=  A(x - \bar{x})$, $x \in X$; then
$
 G_{\bar{A}+A} = G_{\bar{A}} + g.
$
Observe that  $g$ is single-valued, Lipschitz continuous with the constant $\mu$ such that $\mu \tau < 1$,  and $g(\bar{x}) = 0$.  We can apply \cite[Theorem~5G.3]{book} with $F:=G_{\bar{A}}$, $\bar{y} = 0$, $a=b:={\alpha}$, $\kappa:= \tau$, and $\kappa':=\theta$,
obtaining that there is ${\beta}=\beta({\bar{A}}) >0$, independent of $A$, such that the following {\tt claim} holds: {\it for
 each $y$, $y' \in \ball[0, {\beta}]$ and each $x \in \big(G_{\bar{A} + A}\big)^{-1}(y')\cap \ball[\bx,2\theta{\beta}]$, there is $ x'\in \big(G_{\bar{A}+A}\big)^{-1}(y)$ satisfying $  \|x - x'\| \leq \theta \|y - y'\|$.}

We show that, for each $(x,y) \in \ball(\bx,\theta{\beta}/3) \times  \ball(0, {\beta}/3)$,   we have
\begin{equation} \label{regcalG}
 \dist\big(x, \big(G_{\bar{A}+A}\big)^{-1}(y)\big) \leq \theta \dist(y,  G_{\bar{A}+A}(x)\big).
\end{equation}
To see this, fix any such
a pair $(x,y)$. Pick an arbitrary $y' \in G_{\bar{A}+A}(x)$ (if there is any).
If $\|y'\| \leq  {\beta}$, then the claim yields $x' \in\big(G_{\bar{A}+A}\big)^{-1}(y)$ with $\|x - x'\| \leq \theta \|y - y'\|$, and consequently,
\begin{eqnarray*}
  \dist\big(x, \big(G_{\bar{A}+A}\big)^{-1}(y)\big) &\leq & \| x - x' \| \leq \theta \, \|y - y'\|.
\end{eqnarray*}
On the other hand,  assuming that $\|y'\| > {\beta}$, we have $\|y' - y\| >  {\beta} - {\beta}/3 = 2 {\beta}/3$. Then, using the claim with $(x,y')$ replaced by $(\bx, 0)$, we find  $x' \in \big(G_{\bar{A}+A}\big)^{-1}(y)$ such that $\|\bar x - x'\| \leq \theta  \|y\|$. Consequently,
\begin{eqnarray*}
  \dist\big(x, \big(G_{\bar{A}+A}\big)^{-1}(y)\big)  &\leq & \| x - \bar x\| +   \dist\big(\bar x, \big(G_{\bar{A}+A}\big)^{-1}(y)\big) \leq \| x - \bar x\| + \|\bar x - x'\| \\
  & < &  \theta {\beta}/3 + \theta {\beta}/3  =  \theta (2{\beta}/3) <  \theta \|y-y'\|.
\end{eqnarray*}
Since $y' \in G_{\bar{A}+A}(x)$
is arbitrary,  \eqref{regcalG} is proved.

Summarizing,  given $\bar{A} \in \mathcal{A}$, there exists $ \beta:=\beta(\bar A) >0$ such that,  for each $A \in  \mu\ball_{\mathcal{L}(X,Y)}$  and each $(x,y) \in \ball(\bx,\theta{\beta}/3) \times  \ball(0, {\beta}/3)$,  inequality \eqref{regcalG} holds. Taking into account \eqref{Hy}, one has  $ \mathcal{H}(\ball(\bar{x},\delta)) \subset \mathcal{A}  +  \mu\ball_{\mathcal{L}(X,Y)}$.
Letting $b = \min_{\bar{A} \in \mathcal{A}} \{\delta, \beta(\bar{A})/3, \theta \beta(\bar{A})/3 \}$, we finish the proof of this step.

\medskip
{\sc Step 2.} {\it There exists $r > 0$ such that, for each $x \in \ball(\bx,r)$, each $A \in \mathcal{H}(x)$, and each $k \in \N_0$,  there is $x' \in \ball(\bx,r)$ such that
$$
 \big( f(x)+ A(x'-x)+F(x'){\big)} \cap R_k(x, x') \neq \emptyset \quad \mbox{and} \quad
  \|x' - \bar x\| \leq t \|x - \bar x\|.
$$}

Let $b$ and $\theta$ be the constants found in {\sc Step 1}. Using \eqref{eqH2} and \eqref{eqgamma}, we find a constant $\delta \in (0,  b/(1 + \gamma))$ such that, for every $ x \in \ball(\bx,\delta)\setminus\{\bx\}$ and every $k \in \N_0$, we have
\begin{equation} \label{aaa}
\sup_{A \in \mathcal{H}(x)}  \|f(x) - f(\bar{x}) - A (x - \bar{x})\| < \varepsilon {\|x - \bar{x}\|} \quad \mbox{and} \quad
 \dist\big(0,R_k(x,\bx)\big) <  \gamma {\|x - \bx\|}.
\end{equation}
The first inequality in \eqref{eqmukappa} implies that $\theta \ell < \kappa \ell/(1-\mu\kappa) < 1$. Let $r \in (0, \delta)$ be such that
$$
  r <  \frac{ \delta  (1- \theta \ell)}{ 2 (\varepsilon + \gamma) \max\{1, \theta\}} \, .
$$
Fix  an arbitrary $x \in \ball(\bx,r)$.   Choose  any $A \in \mathcal{H}(x)$ and
$k \in \N_0$.
If $x=\bx$, then, setting $x' := \bar{x}$, we are done because $0 \in R_k(\bx,\bx)$ and $0 \in f(\bx) + F(\bx)$. Assume that $x \neq \bx$. By \eqref{aaa} we find $\bar{z} \in - R_k(x, \bx)$ such that $\|\bar z\| < \gamma \|x-\bx\|$. Then
$$
 \ball(\bar z,\delta) \subset \ball(0, (1+\gamma)\delta) \subset \ball(0, b) \subset \ball(0,a).
$$
Consequently, for all  $u$, $u' \in \ball(\bx, \delta)$, we have
 \begin{eqnarray*}
   (- R_k(x,u))\cap \ball(\bar{z},\delta) & \subset  & (-R_k(x,u))\cap \ball(0,a) = - \big(R_k(x,u)\cap \ball(0,a)\big) \\
   &\overset{\eqref{eqRk}}{\subset}&  - \big(R_k(x,u') + \ell \|u-u'\| \ball_Y \big) = - R_k(x,u') + \ell \|u-u'\| \ball_Y.
  \end{eqnarray*}
 From {\sc Step 1} we get
 $$
    \dist\big(u, G_A^{-1}(v)\big) \leq \theta \dist\big(v, G_A(u)\big) \quad \mbox{for all} \quad (u,v)\in \ball(\bx,\delta) \times \ball(0,\delta).
 $$
 As $\theta \ell < 1$, applying Theorem~\ref{thmSumr} with $(F,G,\by, a, \kappa, \beta)$ replaced by $(G_A,-R_k(x,\cdot),0,\delta, \theta,(\varepsilon + \gamma) r)$, we get
 \begin{equation} \label{eqGARk}
    \dist\big(\bx, (G_A - R_k(x,\cdot))^{-1}(y)\big) \leq \frac{\theta}{1- \theta\ell} \|y - \bar{z}\| \quad \mbox{for all} \quad y \in \ball(\bar{z}, (\varepsilon + \gamma) r).
 \end{equation}
  Set
 \begin{equation}
   \label{y}
 y := f(\bar{x}) - f(x) +   A(x - \bar{x}).
\end{equation}
If $y = \bar{z}$, then $f(x) +A(\bar{x} - x) - f(\bar x) \in R_k(x,\bar x) \cap (f(x) + A(\bar x - x) + F(\bar x))$, and setting $x':=\bx$ we are done. Assume that $y \neq \bar{z}$.  The first inequality in \eqref{aaa} and the choice of $\bar{z}$ imply that
$$
0< \|y - \bar z \| \leq \|f(x) - f(\bar{x}) -  A (x - \bar{x})\|  + \|\bar z\| \\
 <  (\varepsilon  + \gamma)\|x - \bar{x}\| < (\varepsilon  + \gamma) r.
$$
Remembering that  $\theta< \kappa/ (1-\mu \kappa)$ and $\kappa \ell/(1-\mu \kappa) < 1$, and using the last inequality in \eqref{eqmukappa},  we get
$$
 \frac{\theta}{1-\theta\ell} <  \frac{\frac{\kappa}{1 - \mu \kappa}}{1-\frac{\kappa\ell}{1 - \mu \kappa}} =   \frac{\kappa}{1 -  (\mu +\ell)\kappa} < \frac{t}{\gamma + \varepsilon}.
$$
This and  \eqref{eqGARk} imply that there is  $x' \in (G_A - R_k(x,\cdot))^{-1}(y)$ such that
\begin{eqnarray*}
 \| x' - \bx \| & < &  \frac{t}{\gamma+\varepsilon} \|y - \bar z \| < \frac{t}{\varepsilon +\gamma} (\varepsilon +\gamma) \|x - \bar{x}\| = t  \|x - \bar{x}\|.
\end{eqnarray*}
Hence, $\|x' -\bx\|<r$ because  $t\in (0,1)$ and $x \in \ball(\bar x, r)$.  The choice of $y$ implies that
$$
 f(\bar{x}) -  f(x) + A(x - \bar{x}) \in G_{A}(x') - R_k(x,x')= f(\bx) + A(x' - \bx) + F(x') - R_k(x,x').
$$
Therefore  $0 \in f(x) + A(x' - x) + F(x') - R_k(x,x')$, which means that $\big( f(x)+ A(x'-x)+F(x'){\big)} \cap R_k(x, x') \neq \emptyset$. The proof of {\sc Step 2}  is finished.

\medskip

To conclude  the proof,  let $r>0$ be the constant found in {\sc Step 2}. Consider the iteration \eqref{niN}  and choose any $k \in \N_0$,
  $x_k \in \ball(\bx,r)$ and
$A_k \in {\cal H}(x_k)$.  Apply {\sc Step 2} with $A:=A_k$ and $x:=x_k$, and set $x_{k+1}:=x'$.
   Then $x_{k+1}$ satisfies  \eqref{niN} and \eqref{t0}.  It remains to choose any $x_0 \in  \ball(\bx,r)$ to
 obtain
this way  an infinite sequence $(x_k)$ in $\ball(\bx,r)$  generated by
\eqref{niN} and  satisfying (\ref{t0})  for all $k \in \N_0$.  Since $t \in (0,1)$,  $(x_k)$  converges linearly to $\bar{x}$.
\end{proof}

\begin{remark} \rm If \eqref{eqgamma} is replaced by a stronger condition
  $$ 
  \lim_{x \to \bx, \ x \neq \bx} \frac{\sup_{k \in \N_0} \dist\big(0,R_k(x,\bx)\big)}{\|x - \bx\|} = 0,
  $$ 
  then there is $r>0$  such that,  for any starting point $x_0 \in \ball(\bx,r) $, there
exists a sequence $(x_k)$ in $\ball(\bx,r)$  generated by \eqref{niN} such that  $(x_k)$ converges q-super-linearly to $\bx$, that is,  if there is $k_0 \in \N$ such that $x_k \neq \bx$ for all $k > k_0$ then
$ 
 \lim_{k \to \infty} \| x_{k+1} - \bx \|/\| x_k - \bx \| = 0
$. 
Indeed, in \eqref{eqmukappa} both the constants $\varepsilon$ and $\gamma$, and consequently, also $t$ can be chosen arbitrarily small.
\end{remark}

Suppose that $X:= \R^n$, $Y:=\R^m$, and $f$ is locally Lipschitz continuous. We can take, for example, Clarke's generalized Jacobian or Bouligand's limiting Jacobian as $\mathcal{H}$. Then $\mathcal{H}$ is upper semicontinuous and condition \eqref{eqH2} is satisfied when $f$ is semismooth at $\bx$ (with respect to the corresponding Jacobian). Moreover, $\chi(\mathcal{H}(\bx))=0$.
If, in addition,   $F \equiv 0$  and $R_k \equiv 0$ for each $k \in \N_0$, then the assumption
of regularity of all mappings $G_A$ in \eqref{G_A} is nothing else but
the requirement
that all matrices in $\mathcal{H}(\bx)$ have full-rank $m$, and we arrive at
the
classical result for semismooth Newton-type methods (see, for example, \cite{bonnans, Xu, ISbook, ACN, CDK, CDG,Uko}).

In \cite{ACN},  the following iterative process was studied:
{\it Choose a sequence  of set-valued mappings $A_k: X\times X\rightrightarrows Y$   and a starting point $x_0 \in X$, and generate a sequence $(x_k)$ in $X$ by taking $x_{k+1}$ to be a solution to the auxiliary inclusion}
\begin{equation}\label{Newton-Seq}
0\in A_k(x_{k+1},x_k)+F(x_{k+1}) \quad \mbox{for each} \quad \quad k \in \N_0.
\end{equation}
Theorem~4.1 therein  for iteration \eqref{Newton-Seq} is quite similar to Theorem~\ref{mrr2}
above with one important difference.
We assume that all the ``partial linearizations" $G_A$ in \eqref{G_A} are regular around $(\bx,0)$, while
in  \cite{ACN}
the mapping $f+F$ is assumed to be such.
Clearly, our assumption is weaker. Indeed take, for example,  $f(x):=|x|$, $x \in \R$, $F \equiv 0$, and $\mathcal{H}(x):=
x/|x|
$ if $x\neq 0$ and $\mathcal{H}(0) := \{-1,1\}$. Then $f$ is not even semiregular at $0$ while $\mathcal{H}$ satisfies all the assumptions in Theorem~\ref{mrr2}.


\begin{thebibliography}{00}

\bibitem{ACN} S. Adly, R. Cibulka, H. Van Ngai, Newton's method for solving inclusions using set-valued approximations,  SIAM J. Control Optim. 25 (2015) 159-184.

\bibitem{ADS2013}  M. Apetrii, M. Durea, R. Strugariu, On subregularity properties of set-valued mappings,
 Set-Valued Var. Anal. 21 (2013) 93--126.

\bibitem{AM2011} F.J. Arag\'{o}n Artacho,  B.S. Mordukhovich, Enhanced metric regularity and Lipschitzian properties of variational systems, J. Glob. Optim. 50 (2011) 145--167.

\bibitem{AF} J.P. Aubin, H. Frankowska, {Set-Valued Analysis}, Birkh\"auser, Boston,
1990.

\bibitem{BR}  W.-J. Beyn,  J. Rieger, An implicit function theorem for one-sided Lipschitz mappings,  Set-Valued Var. Anal. 19 (2011) 343--359.

\bibitem{bonnans}
   {J.F. Bonnans}, {Local analysis of Newton-type methods for variational inequalities and nonlinear programming},
    Appl. Math. Optim. 29 (1994) 161--186.

\bibitem{CDG} R. Cibulka, A.L. Dontchev, M.H. Geoffroy, Inexact Newton methods and Dennis--Mor\'e theorems for nonsmooth generalized equations,  SIAM J. Control Optim. 53 (2015) 1003-1019.

\bibitem{CDK} R. Cibulka, A.L. Dontchev, A.Y. Kruger, Strong metric subregularity of mappings in variational analysis and optimization, J. Math. Anal. Appl. 457 (2018) 1247--1282.




\bibitem{CF} R. Cibulka, M. Fabian, On primal regularity estimates for set-valued mappings, J. Math. Anal. Appl. 438 (2016) 444--464.

\bibitem{DmiMilOsm80} A.V. Dmitruk, A.A. Milyutin, N.P. Osmolovsky,
    Lyusternik's theorem and the theory of extrema,
    Russian Math. Surveys 35 (1980)
    11--51.


\bibitem{g_ad}   A.L. Dontchev, The Graves theorem revisited, J. Convex Anal. {3}   (1996)  45--53.

\bibitem{in} {A.L. Dontchev,  R.T. Rockafellar,} Convergence of inexact Newton methods for generalized equations, Math. Programming B { 139} (2013) 115--137.


\bibitem{book}  A.L. Dontchev,  R.T. Rockafellar, Implicit Functions and Solution Mappings, second ed., Springer, Dordrecht, 2014.

\bibitem{DS2012} M. Durea, R. Strugariu, Openness stability and implicit multifunction theorems: Applications to variational systems, Nonlinear Anal. 75 (2012) 1246--1259.


\bibitem{fp}  M. Fabian, D. Preiss,  {A   generalization   of   the   interior
mapping theorem of Clarke   and   Pourciau},   Comment.   Math.   Univ.
Carolinae  28 (1987) 311--324.


\bibitem{G} {L.M.} Graves, Some mapping theorems. Duke
Math. J.
{17} (1950) 111--114.

\bibitem{HG}   H. Hildebrand,  L.M. Graves, Implicit functions and their differentials in general
analysis. {Trans. AMS}  {29}  (1927) 127--153.

\bibitem{HNT2014} V.N. Huynh,  H.T. Nguyen, M. Th\'era,  Metric regularity of the sum of multifunctions and applications,
{ J. Optim. Theory Appl.} {160} (2014)  355--390.


\bibitem{IoffeSurvey} A.D. Ioffe, {Metric regularity--a survey. Part 1. Theory}, J. Aust. Math. Soc. 101 (2016) 188--243.


\bibitem{i2000} A.D. Ioffe, { Metric regularity and subdifferential calculus},
Russian math. Surveys  55 (2000) 501--558.

\bibitem{ISbook} A.F. Izmailov, M.V. Solodov, Newton-type Methods for Optimization and Variational Problems, Springer, New York, 2014.

\bibitem{JLbook} V. Jeyakumar, D.T. Luc, Nonsmooth Vector Functions and Continuous Optimization, Springer Science+Business Media, New York, 2008.

\bibitem{josephy}
   {N.H. Josephy}, {Newton's method for generalized equations},  Technical Summary Report no. 1965. Mathematics Research Center, University of Wisconsin, Madison, 1979.

\bibitem{KKbook} D. Klatte, B. Kummer, Nonsmooth Equations in Optimization. Regularity, Calculus, Methods and Applications, Kluwer Academic Publishers, Dordrecht, 2002.

\bibitem{Kru00} {A.Y. Kruger,}
Strict $(\varepsilon,\delta)$-semidifferentials and extremality of sets and functions, Dokl. Nats.
Akad. Nauk Belarusi 44 (2000) 21--24. In Russian.
\bibitem{Kru02} {A.Y. Kruger,}
Strict $(\varepsilon,\delta)$-semidifferentials and extremality conditions, Optimization 51  (2002)
539--554.
\bibitem{Kru04} {A.Y. Kruger,}
Weak stationarity: eliminating the gap between necessary and sufficient conditions,
Optimization 53  (2004) 147--164.
\bibitem{Kru05} {A.Y. Kruger,}
Stationarity and regularity of set systems, Pac. J. Optim. 1 (2005) 101--126.
\bibitem{Kru06} {A.Y. Kruger,}
About regularity of collections of sets, Set-Valued Anal. 14  (2006) 187--206.


\bibitem{AK2009} A.Y. Kruger, About stationarity and regularity in variational analysis, Taiwan. J. Math. 13 (2009) 1737--1785.

\bibitem{Kru15} {A.Y. Kruger,}
\newblock Error bounds and metric subregularity,
\newblock {Optimization} {64} (2015) 49--79.

\bibitem{KruTha15}
A.Y. Kruger, N.H. Thao,
\newblock Quantitative characterizations of regularity properties of collections of sets,
\newblock {J. Optim. Theory Appl. 164}  (2015) 41--67.


\bibitem{L}    L.A. Lyusternik, On the conditional extrema of functionals, Mat. Sbornik {41} (1934) 390--401  (in Russian).

\bibitem{JPP} J.-P. Penot, {Calculus without Derivatives}, Springer, New York, 2013.

\bibitem{Pourciau} B.H. Pourciau, Analysis  and  optimization  of Lipschitz ­continuous mappings,  J. Opt. Theory  Appl.  { 22} (1977) 311--351.

\bibitem{rob}   {S.M. Robinson}, {Strongly regular generalized equations}, Math. Oper. Res. 5 (1980) 43--62.

\bibitem{u} A. Uderzo,  A strong metric subregularity analysis of nonsmooth mappings via steepest displacement rate, { J. Optim. Theory Appl.} {171} (2016)  573--599.

\bibitem{Ude4} A. Uderzo,
    An implicit multifunction theorem for the
        hemiregularity of mappings with application to
        constrained optimization,
    {Preprint, arXiv:1703.10552} (2017)
    {1--16}.

\bibitem{Uko} L.U. Uko, Generalized equations and the generalized Newton method, Mathematical programming 73 (1996) 251-268.

\bibitem{Xu}  {H. Xu}, {Set-valued approximations and Newton's methods}, Math. Program. Ser. A 84 (1999) 401--420.

\end{thebibliography}
\end{document}